\newtheorem{theorem}{Theorem}
\newtheorem{corollary}[theorem]{Corollary}
\newtheorem{definition}[theorem]{Definition}
\newtheorem{lemma}[theorem]{Lemma}
\newtheorem{remark}[theorem]{Remark}
\def\be{\beta}
\def\o{\omega}
\def\l{\lambda}
\def\o{\omega}
\def\r{\restriction}
\def\u{\uparrow}
\def\G{\Gamma}
\def\D{\Delta}
\def\B{\nabla}
\def\f{\rightarrow}
\def\fx{\leadsto}
\def\v{\vdash}
\def\<{\langle}
\def\>{\rangle}
\def\F{\displaystyle\frac}
\def\rb{\rhd_{\beta}}
\title{A completeness result for a realisability semantics for an intersection type system}
\author{Fairouz Kamareddine\thanks{School of Mathematical and Computer Sciences,
        Heriot-Watt Univ., Riccarton,
        Edinburgh EH14 4AS, Scotland, fairouz@macs.hw.ac.uk} $\;\;\;$
        and$\;\;\;$
Karim Nour\thanks{Universit\'e de Savoie, Campus Scientifique,
73378 Le Bourget du Lac, France, nour@univ-savoie.fr}}
\begin{document}
\maketitle

\begin{abstract}
In this paper we consider a type system with a universal type $\o$
where any term (whether open or closed, $\beta$-normalising or not)
has type $\o$.  We provide this type system with a realisability
semantics where an atomic type is interpreted as the set of
$\lambda$-terms saturated by a certain relation.  The variation of the
saturation relation gives a number of interpretations to each type.
We show the soundness and completeness of our semantics and that for
different notions of saturation (based on weak head reduction and
normal $\beta$-reduction) we obtain the same interpretation for types.
Since the presence of $\o$ prevents typability and realisability from
coinciding and creates extra difficulties in characterizing the
interpretation of a type, we define a class ${\mathbb U}^+$ of the
so-called positive types (where $\o$ can only occur at specific
positions).  We show that if a term inhabits a positive type, then
this term is  $\beta$-normalisable and reduces to a closed term.
In other words, positive types can be used to represent abstract data
types.  The completeness theorem for ${\mathbb U}^+$ becomes
interesting indeed since it establishes a perfect equivalence between
typable terms and terms that inhabit a type.  In other words,
typability and realisability coincide on ${\mathbb U}^+$.  We give a
number of examples to explain the intuition behind the definition of
${\mathbb U}^+$ and to show that this class cannot be extended while
keeping its desired properties.

 \end{abstract}

\section{Introduction}
\label{intrp}

The ground work for intersection types and related notions was developed in the seventies
 \cite{coppo78,coppo80,salle} and have since proved to be a valuable tool in
 the theoretical studies and applications of the lambda calculus.
 Intersection types incorporate type polymorphism in a finitary way
 (where the usage of types is listed rather than quantified over).
Since the late seventies, numerous intersection type systems have been developed or used 
for a multitude of purposes (the list is huge; for a very brief list we simply refer the reader to the recent articles \cite{alessi,carwel} and the references there, for a longer list we refer the reader to the 
 bibliography of intersection types and related systems available (while that URL address is active) at
\url{http://www.macs.hw.ac.uk/~jbw/itrs/bibliography.html}).
In this paper, we are interested in the
 interpretation of an intersection type.  We study this interpretation
 in the context of the so-called realisability semantics.

The idea of realisability semantics is to associate to each type a set
of terms which realise this type. Under this semantics, an atomic type is interpreted as the set of
$\lambda$-terms saturated by a certain relation.  Then, arrow and
intersection types receive their intuitive interpretation of
functional space and set intersection. For example, a term which realises
the type ${\mathbb N}\rightarrow {\mathbb N}$ is a function from
${\mathbb N}$ to ${\mathbb N}$.  Realisability semantics has been a
powerful method for establishing the strong normalisation of type
systems \`a la Tait and Girard.  The realisability of a type system
enables one to also show the soundness of the system in the sense that
the interpretation of a type contains all the terms that have this
type.  Soundness has been an important method for characterising the
algorithmic behaviour of typed terms through their types as has been
illuminative in the work of Krivine.

It is also interesting to find the class of types for which the
converse of soundness holds.  I.e., to find the types $A$ for which
the realisability interpretation contains exactly (in a certain sense)
the terms typable by $A$. This property is called completeness and
has not yet been studied for every type system.

In addition to the questions of soundness and completeness for a
realisability semantics, one is interested in the additional three
questions:
\begin{enumerate}
\item 
Can different interpretations of a type given by different saturation relations be compared?
\item
For a particular saturation relation, what are the types uniquely realised by the $\lambda$-terms which are typable by these types?
\item
Is there a class of types for which typability and realisability coincide?
\end{enumerate}
In this paper we establish the soundness and completeness as well as give answers to questions 1, 2 and 3 for a strict non linear intersection type system with a universal type.  We show that for different notions of saturation (based on weak head
reduction and normal $\beta$-reduction) we obtain the same
interpretation for types answering question 1 partially. Questions 2
and 3 are affected by the presence of $\o$ which prevents typability
and realisability from coinciding and creates extra difficulties in
characterizing the interpretation of a type.  We define a class
${\mathbb U}^+$ of the so-called positive types (where $\o$ can only
occur at specific positions).  We show that if a term inhabits a
positive type, then this term is  $\beta$-normalisable and
reduces to a closed term.  In other words, positive types can be used
to represent abstract data types.  This result answers question 2
and depends on the full power of soundness.  The completeness theorem
for ${\mathbb U}^+$ becomes interesting indeed since it establishes a
perfect equivalence between typable terms and terms that inhabit a
type.  In other words, typability and realisability coincide on
${\mathbb U}^+$ answering question 3.  We give a number of examples to
explain the intuition behind the definition of ${\mathbb U}^+$ and to show
that this class cannot be extended while keeping its desired properties.

 Hindley~\cite{Hin2, Hin3,
Hin4} was the first to study the completeness of a simple type system
and he showed that all the types of that system have the completeness
property.  Then, he generalised his completeness proof for an
intersection type system~\cite{Hin1}.  
Using his completeness theorem for the realisability
semantics based on the sets of $\lambda$-terms saturated by
$\beta\eta$-equivalence, Hindley has shown that simple types have
property 2 above. However, his completeness theorem for intersection
types does not allow him to establish property 2 for the intersection
type system. Moreover, Hindley's completeness theorems were
established with the sets of $\lambda$-terms saturated by
$\beta\eta$-equivalence, and hence they don't permit a comparison
between the different possible interpretations. In our method,
saturation is not by $\beta\eta$-equivalence.  Rather, it is by the
weaker requirement of weak head normal forms. Hence, all of Hindley's
saturated models are also saturated in our framework and moreover,
there are saturated models based on weak head normal form which cannot
be models in Hindley's framework.

\cite{Lab1} has established
completeness for a class of types in Girard's system F (also
independently discovered by Reynolds as the second order typed
$\lambda$-calculus) known as the strictly positive types.
\cite{FaNo1, FaNo2} generalised the result of~\cite{Lab1} for the
larger class which includes all the positive types and also for second
order functional arithmetic. \cite{Coq} established
recently by a different method using Kripke models, the completeness
for the simply typed $\lambda$-calculus. Finally \cite{NoSa}
introduced a realizability semantics for the simply typed
$\lambda\mu$-calculus and proved a completeness result.

The paper is structured as follows: In section~\ref{typesystem}, we
introduce the intersection type system that will be studied in this
paper.  In section~\ref{subredsec} we study both the subject reduction and subject expansion 
properties for $\beta$.  In
section~\ref{realsem} we establish the soundness and completeness of
the realisability semantics based on two notions of saturated sets
(one using weak head reduction and the other using $\beta$-reduction).
In section~\ref{meantypes} we show that the meaning of a type does not
depend on the chosen notion of saturation (based on either weak head
reduction or $\beta$-reduction).  We also define a subset of types
which we show to satisfy the (weak) normalisation property and for
which typability and realisability coincide.

\section{The typing system}
\label{typesystem}

A number of intersection type systems have been given in the
literature (for a very brief list see \cite{alessi,carwel} and the references there; for a longer list (and while that URL address is active) see \url{http://www.macs.hw.ac.uk/~jbw/itrs/bibliography.html}).  In
this paper we introduce an interesection type system due to J.B.\
Wells and inspired by his work with S\'ebastien Carlier on expansion \cite{carwel}.  
We follow
\cite{carwel} and write the type judgements $\G \v M: U$ as $M: \<\G
\v U\>$.  There are many reasons why this latter notation is to be prefered over the former (see \cite{carwel}). In particular, 
this typing notation allowed J.B. Wells in \cite{principal} to give a very simple yet general definition of principal typings.

Before presenting the type system, we give a number of its characteristics:
\begin{itemize}
\item
The type system is {\em relevant}: this means that the type environments contain all and only the necessary assumptions 
as is shown in lemma~\ref{structyping}.\ref{structypingone}. 
\item
The type system is {\em strict}
and {\em non-linear}.   Following the terminology of \cite{bakel} (who advocated the use of of linear systems of intersection types only with strict intersection types), types are strict if $\o$ and $\sqcap$ do not occur
immediately to the right of arrows.   Our type system is non-linear since $\sqcap$ is
idempotant. We guarantee strictness by using two sets of types
${\mathbb T}$ and ${\mathbb U}$ such that ${\mathbb T} \subset
{\mathbb U}$ and ${\mathbb T}$ is only formed by either basic types or
using the arrow constructor (without permitting $\o$ and $\sqcap$ to
occur immediately to the right of arrows).  This means that one does  not need to state laws relating 
$A \rightarrow (B_1 \sqcap B_2)$ to $(A \f B_1) \sqcap (A \f B_2)$, yet one can still establish a number of type inclusion properties as is shown in lemma~\ref{goodeg}.
\end{itemize}

\begin{definition}
\begin{enumerate}
\item Let ${\cal V}$ be a  denumerably infinite set of
variables. The set of terms  ${\cal M}$, of the $\l$-calculus is defined
as usual by the following grammar:

$${\cal M} ::= \; {\cal V} \; \mid \; (\l {\cal V}.{\cal M}) \; \mid
\; ({\cal M} {\cal M})$$
We let $x, y, z, etc.$ range over ${\cal V}$ and $M, N, P, Q, M_1, M_2,
\dots$ range over ${\cal M}$.  
We assume the the usual definition of subterms and the usual
convention for parenthesis and omit these when no confusion arises.
In particular, we write $M\;N_1...N_n$ instead of $(...(M \; N_1)\;N_2...N_{n-1})\;N_n$.

We take terms modulo
$\alpha$-conversion and use the
Barendregt convention (BC) where the names of bound
variables differ from the free ones. When two terms $M$ and $N$ are equal
(modulo $\alpha$), we write $M=N$.
We write $FV(M)$ for the set of the free variables of term $M$.

\item We define as usual the substitution $M[x:= N]$ of the term $N$
for all free occurrences of $x$ in the term $M$ and similarly,
$M[(x_i:=N_i)_{1}^n]$, the simultaneous substitution of $N_i$ for all
free occurrences of $x_i$ in $M$ for $1\leq i \leq n$.

\item We assume the usual definition of compatibility.
\begin{itemize}
\item
The weak head reduction $\rhd_f$ on ${\cal M}$  is defined by: $M
\rhd_f N$ if $M = (\l x. P)Q \;Q_1...Q_n$ and $N = P[x:=Q]\;
Q_1...Q_n$ where $n \geq 0$.
\item
The reduction relation $\rhd_\be$ on ${\cal M}$  is defined as the
least compatible relation closed under  the rule: $(\l x. M)N
\rhd_\be  M[x:=N]$.
\item For $r \in \{f,\be\}$, $\rhd_r^*$ denotes the
reflexive transitive closure of $\rhd_r$.
\item $\simeq_{\beta}$ denotes the equivalence relation induced by
$\rhd_{\beta}^*$. 
\end{itemize}
\end{enumerate}
\end{definition}

The next theorem is standard and is needed for the rest of the paper.
\begin{theorem}\label{conf}
\begin{enumerate}
\item\label{confone}
Let $r \in \{f,\be\}$.  If $M \rhd_r^* N$, then $FV(N) \subseteq
FV(M)$.
\item\label{conftwo} If $M \rhd^*_f N$, then, for all $P \in {\cal M}$, $M P \rhd^*_f N
P$.
\item\label{confthree}
If $M \rhd^*_\be M_1$ and $M \rhd^*_\be M_2$, then there is
$M'$ such that $M_1 \rhd^*_\be M'$ and $M_2 \rhd^*_\be M'$.
\item
\label{confthree'}
$M_1 \simeq_{\beta} M_2$ iff there is a
term $M$ such that $M_1 \rhd^*_\be M$ and $M_2 \rhd^*_\be M$.
\item\label{conftwo'} 
Let $n \geq 1$ and assume 
$x_i \not \in  FV(M)$ for every $1 \leq i \leq n$.  If 
$M x_1...x_n \rb^* x_j \; N_1...N_m$ for some $1 \leq j \leq
n$ and $m \geq 0$,  then for some $k \geq j$ and $s \leq m$, $M \rb^*
\l x_1....\l x_k. x_j \; M_1...M_s$ where $s + n  = k + m$, $M_i
\simeq_{\beta} N_i$ for every $1 \leq i \leq s$ and $N_{s+i}
\simeq_{\beta} x_{k+i}$ for every $1 \leq i \leq n-k$.
\item\label{conftwo''}  If $M\, x$ is weakly $\beta$-normalising  and $x \not \in FV(M)$, then $M$ is also weakly $\beta$-normalising.
\end{enumerate}
\end{theorem}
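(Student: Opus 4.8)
The plan is to establish, by induction on $n \geq 0$, the following statement, from which \ref{conf}.\ref{conftwo''} follows at once by taking $n$ to be the length of any finite reduction of $M\,x$ to a normal form: \emph{for every term $M$ with $x \notin FV(M)$, if $M\,x$ has a $\beta$-reduction to a $\beta$-normal form of length at most $n$, then $M$ is weakly $\beta$-normalising.} The base case $n = 0$ is immediate: then $M\,x$ is $\beta$-normal, hence so is its subterm $M$. I will use freely the trivial fact that weak $\beta$-normalisation is preserved under $\beta$-expansion (if $M \rhd_\beta M'$ and $M'$ is weakly $\beta$-normalising, then so is $M$, since a reduction of $M'$ to a normal form extends one of $M$).

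For the inductive step, assume $M\,x$ is not already normal (otherwise conclude as in the base case) and fix a reduction $M\,x \rhd_\beta Q \rhd_\beta^{*} P$ with $P$ a normal form whose suffix $Q \rhd_\beta^{*} P$ has length at most $n - 1$. Since $x$ is a variable, the redex contracted in the first step is either internal to $M$ or is the whole term $M\,x$, and there is no third possibility. In the first case $Q = M'\,x$ with $M \rhd_\beta M'$; then $x \notin FV(M')$ by \ref{conf}.\ref{confone}, the induction hypothesis gives that $M'$ is weakly $\beta$-normalising, and hence so is $M$. In the second case $M = \l y. M_0$, with $y \neq x$ by (BC), and $Q = M_0[y := x]$; since $x \notin FV(\l y. M_0)$ and $y \neq x$ we get $x \notin FV(M_0)$, so $[y := x]$ just renames the bound variable $y$ to the fresh variable $x$, and $M_0$ and $M_0[y := x]$ have isomorphic $\beta$-reduction graphs. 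As $Q = M_0[y := x]$ reduces to the normal form $P$, it is weakly $\beta$-normalising, hence so is $M_0$, hence so is $M = \l y. M_0$.

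I expect the second case of the inductive step to be the only genuinely delicate point, and it is there that the hypothesis $x \notin FV(M)$ is essential: it is exactly what makes contracting the head redex of $M\,x$ substitute a \emph{fresh} variable for $y$, so that weak normalisation of $M_0[y := x]$ can be transported back to $M_0$; were occurrences of $x$ already present in $M_0$, the substitution could merge occurrences and this transport would fail. Everything else is the routine observation that, $x$ being a variable, every redex of $M\,x$ lies inside $M$ or is the outermost one, together with closure of weak normalisation under $\beta$-expansion. (A variant avoiding the induction uses the leftmost reduction of $M$: if it were infinite then, tracking it under application to $x$, either no term along it is an abstraction and one obtains an infinite leftmost reduction of $M\,x$, contradicting that $M\,x$ is weakly normalising, or the first head abstraction $\l y. M_0$ reached gives $M\,x \rhd_\beta^{*} (\l y. M_0)\,x \rhd_\beta M_0[y := x]$ and the same renaming argument finishes it — but this route relies on the normalisation theorem, which the elementary induction above sidesteps.)
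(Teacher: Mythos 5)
Your treatment of part 6 is correct, and it is essentially the paper's own argument: both proofs rest on the observation that a reduction of $M\,x$ to normal form either takes place entirely inside $M$ (yielding $M'\,x$ normal with $M \rhd^*_\beta M'$), or at some point contracts the outer redex, at which stage $M$ has reduced to an abstraction $\lambda y.N$ with $x \notin FV(N)$ by part 1, so that $N[y:=x]$ is a mere renaming and $\lambda y.N = \lambda x.N[y:=x] \rhd^*_\beta \lambda x.P$ with $\lambda x.P$ normal. Your induction on the length of the reduction of $M\,x$ just makes explicit the step-by-step decomposition that the paper leaves implicit (every redex of $M\,x$ is inside $M$ or is the whole term); that is a small gain in rigour rather than a different method, and your handling of the renaming case, including why $x \notin FV(M)$ is essential, matches the paper's.

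The gap is one of coverage: the statement is the whole of Theorem~\ref{conf}, and your proposal proves only item 6. Items 1--4 are standard (induction on the reduction, Church--Rosser, and the characterisation of $\simeq_\beta$), but item 5 has real content and is left untouched. The paper proves it by observing that $M x_1 \ldots x_n \rhd^*_\beta x_j\,N_1 \ldots N_m$ makes $M x_1 \ldots x_n$, hence $M$, solvable (citing Krivine), so the head reduction of $M$ terminates in some $\lambda x_1 \ldots \lambda x_k.\, z\,M_1 \ldots M_s$; comparing $( \lambda x_1 \ldots \lambda x_k.\, z\,M_1 \ldots M_s)x_1 \ldots x_n$ with $x_j\,N_1 \ldots N_m$ up to $\simeq_\beta$, and using $x_j \notin FV(M)$ to force $z = x_j$ and $j \leq k$, then yields the stated relations between $k, s, n, m$ and the $\beta$-equivalences $M_i \simeq_\beta N_i$, $N_{s+i} \simeq_\beta x_{k+i}$. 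Without an argument of this kind (or some substitute for the appeal to solvability and termination of head reduction), the proposal does not establish the theorem as stated.
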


\begin{proof}
See~\cite{Bar1} for more detail.  Here, we sketch the proofs.  
\ref{confone} (resp.\ \ref{conftwo}) is by induction on $M \rhd_r^* N$ (resp.\ $M \rhd_f^* N$).  \ref{confthree} is the Church-Rosser. \ref{confthree'} if) is by definition of $\simeq_{\beta}$ whereas only if) is by induction on $M_1 \simeq_{\beta} M_2$  using \ref{confthree}.  
\begin{itemize}
\item[\ref{conftwo'}.] is as follows: Since $M x_1 ... x_n \rhd^*_\be x_j N_1
... N_m$, then by page 23 of \cite{Kri1}, $M x_1 ... x_n$ is solvable and hence, $M$ is also
solvable and its head reduction terminates. Therefore, $M
\rhd^*_\be \lambda x_1 ... \lambda x_k. z M_1 ... M_s$ for $s, k \geq 0$.
Since $ x_j \; N_1...N_m  \simeq_{\beta}  ( \lambda x_k. z M_1 ... M_s)x_1...x_n $ then $k \leq n$, 
$ x_j \; N_1...N_m  \simeq_{\beta}  z M_1 ... M_s x_{k+1}...x_n$.  Hence, $z = x_j$, $s \leq m$, $j \leq k$ (since $x_j \not \in FV(M)$), $m = s + (n - (k +
1)) + 1 = s + n - k$, $M_i
\simeq_{\beta} N_i$ for every $1 \leq i \leq s$ and $N_{s+i}
\simeq_{\beta} x_{k+i}$ for every $1 \leq i \leq n-k$.
\item[\ref{conftwo''}.] is by cases:
\begin{itemize}
\item
If $M\, x \rhd^*_\be M'\, x$ where $M'\, x$ is in $\be$-normal form and $M  \rhd^*_\be M'$ then $M'$ is in $\be$-normal form and  $M$ is $\beta$-normalising.
\item
If $M\, x \rhd^*_\be (\l y.N)\, x \rhd_\be N[y:=x] \rhd^*_\be P$ where $P$ is in $\be$-normal form and $M  \rhd^*_\be \l y.N$ then by \ref{confone}, $x \not \in FV(N)$ and so, 
$ M  \rhd^*_\be \l y.N = \l x.N[y:=x] \rhd^*_\be \l x.P$.
 Since $\l x.P$ is  in $\be$-normal form, $M$ is $\beta$-normalising.
\end{itemize}
\end{itemize}
\end{proof}

\begin{definition}
\begin{enumerate}
\item Let ${\cal A}$ be a denumerably infinite set of atomic types.
The types are defined by the following grammars:
$${\mathbb T} ::= \; {\cal A} \; \mid \; {\mathbb U} \f {\mathbb T}$$
$${\mathbb U} ::= \; \o \; \mid \; {\mathbb U} \sqcap {\mathbb U}
\; \mid \; {\mathbb T}$$

We let $a, b, c, a_1, a_2, \dots$ range over ${\cal A}$,
 $T, T_1, T_2, T', \dots$ range over ${\mathbb T}$ and $U$,
$V$, $W$, $U_1$, $V_1$, $U', \dots$ range over ${\mathbb U}$.

We quotient types by taking $\sqcap$ to be commutative (i.e. $U_1
\sqcap U_2 = U_2 \sqcap U_1$), associative (i.e. $U_1 \sqcap (U_2
\sqcap U_3) = (U_1 \sqcap U_2) \sqcap U_3$), idempotent (i.e. $U
\sqcap U = U$) and to have $\o$ as neutral (i.e. $\o \sqcap U =
U$).\\ We denote $U_n\sqcap U_{n+1} \dots \sqcap U_{m}$ by
$\sqcap_{i=n}^{m}U_i$ (when $n \leq m$).

\item A type environment is a set $\{x_i:U_i$ / $1 \leq i \leq n, n \geq 0,
\mbox{ and }   \forall 1 \leq i \leq n, \; x_i \in {\cal V}, \;
U_i \in {\mathbb U} \mbox{ and } \forall 1 \leq i, j \leq n,
\mbox{ if } i \not = j \mbox { then } x_i \neq x_j\}$. We denote
such environment (call it $\G$) by $x_1:U_1, \dots ,x_n:U_n$ or
simply by $(x_i:U_i)_n$ and define $dom(\G) = \{x_i$ / $1 \leq i
\leq n\}$.  We use $\G, \D, \G_1, \dots$ to range over
environments and write $()$ for the empty environment.

If $M$ is a
term and $FV(M) = \{x_1,...,x_n\}$, we denote $env^M_\o  = (x_i :
\o)_n$.

If $\G = (x_i:U_i)_n$, $x \not \in dom(\G)$ and $U \in
{\mathbb U}$, we denote $\G, x:U$ the type environment $x_1:U_1,
\dots ,x_n:U_n,x:U$.

Let $\G_1 = (x_i:U_i)_n,(y_j:V_j)_m$ and
$\G_2 = (x_i:U'_i)_n,(z_k:W_k)_l$. We denote $\G_1 \sqcap \G_2$
the type environment $(x_i:U_i \sqcap
U'_i)_n,(y_j:V_j)_m,(z_k:W_k)_l$. Note that $dom(\G_1 \sqcap \G_2)
= dom(\G_1) \cup dom(\G_2)$ and that $\sqcap$ is commutative,
associative and idempotent on environments.

\item The typing rules are the following:

\begin{center}
$\F{}{x : \<x : T \v T\>}\;\;\;ax$\\[0,5cm]
\end{center}

\begin{center}
$\F{}{M : \<env^M_\o \v \o \>}\;\;\;\o$\\[0,5cm]
\end{center}

\begin{center}
$\F{M : \<\G , x :U \v T\>}{\l x. M : \<\G \v U \f T\>}\;\;\;\f_i$\\[0,5cm]
\end{center}

\begin{center}
$\F{M : \<\G \v T\>\;\;\;x \not \in dom(\G)}{\l x. M : \<\G \v \o \f T\>}\;\;\;\f'_i$\\[0,5cm]
\end{center}

\begin{center}
$\F{M_1 : \<\G_1 \v U \f T\> \;\;\; \hspace{0.2in} M_2 : \<\G_2 \v
U\>}{M_1 \; M_2 : \<\G_1 \sqcap \G_2 \v T\>}\;\;\;\f_e$\\[0,5cm]
\end{center}

\begin{center}
$\F{M: \<\G \v U_1\> \;\;\;\hspace{0.2in} M : \<\G \v U_2\>}
{M : \<\G \v U_1 \sqcap U_2\>}\;\;\;\sqcap_i$\\[0,5cm]
\end{center}

\begin{center}
$\F{M : \<\G\v U\> \;\;\;\hspace{0.2in} \<\G\v U\> \sqsubseteq \<\G'\v U'\>}
{M : \<\G'\v U'\>}\;\;\;\sqsubseteq$\\[1cm]
\end{center}
In the last clause, the binary relation $\sqsubseteq$ is defined
by the following rules:

\begin{center}
$\F{}{\Phi \sqsubseteq \Phi}\;\;\;ref$ \\[0.5cm]

$\F{\Phi_1 \sqsubseteq \Phi_2 \;\;\; \; \;\;\; \Phi_2 \sqsubseteq \Phi_3}{\Phi_1 \sqsubseteq \Phi_3}\;\;\;tr$ \\[0.5cm]

$\F{}{U_1 \sqcap U_2 \sqsubseteq U_1}\;\;\;\sqcap_e$\\[0.5cm]

$\F{U_1 \sqsubseteq V_1 \;\;\;  \;\;\; U_2 \sqsubseteq V_2}{U_1 \sqcap U_2 \sqsubseteq V_1 \sqcap V_2}\;\;\;\sqcap$ \\[0.5cm]

$\F{U_2 \sqsubseteq U_1 \;\;\;  \;\;\; T_1 \sqsubseteq T_2}{U_1 \f T_1 \sqsubseteq U_2 \f T_2}\;\;\;\f$ \\[0.5cm]

$\F{U_1 \sqsubseteq U_2 \;\;\;  \;\;\; x \not \in  dom(\G)}{\G, x : U_1 \sqsubseteq  \G, x : U_2}\;\;\;\sqsubseteq_c$ \\[0.5cm]

$\F{U_1 \sqsubseteq U_2 \;\;\; \;\;\; \G_2 \sqsubseteq
\G_1}{\<\G_1 \v  U_1\> \sqsubseteq
\<\G_2 \v  U_2\>}\;\;\;\sqsubseteq_{\<\>}$ \\[0.5cm]
\end{center}

Throughout, we use $\Phi, \Phi', \Phi_1, \dots$ to denote $U \in
{\mathbb U}$, or environments $\G$ or typings $\<\G\v U\>$. Note
that when $\Phi \sqsubseteq \Phi'$, then $\Phi$ and $\Phi'$ belong
to the same set (either ${\mathbb U}$ or environments or typings).
\end{enumerate}
\end{definition}

The next lemma gives the shape of a type in ${\mathbb U}$.

\begin{lemma}\label{omega}
\begin{enumerate}
\item\label{omegaone}
If $U \in {\mathbb U}$, then $U = \omega$ or $U =
\sqcap_{i=1}^{n}T_i$ where $n \geq 1$ and $\forall~1 \leq i \leq
n$, $T_i \in {\mathbb T}$.
\item \label{omegatwo} $U \sqsubseteq \o$.
\item\label{omegathree}  If $\omega \sqsubseteq U$, then $U = \omega$.
\end{enumerate}
\end{lemma}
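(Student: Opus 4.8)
The plan is to handle the three items separately: items~\ref{omegaone} and~\ref{omegatwo} are routine, and item~\ref{omegathree} is the substantive one, to be proved by induction on subtyping derivations.

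For item~\ref{omegaone} I would argue by structural induction on the formation of $U\in{\mathbb U}$ from the grammar. The cases $U=\o$ and $U=T\in{\mathbb T}$ are immediate (the latter with $n=1$). For $U=U_1\sqcap U_2$, the induction hypothesis presents each $U_i$ as $\o$ or a finite nonempty $\sqcap$ of types from ${\mathbb T}$; using that $\o$ is neutral, the type $U_1\sqcap U_2$ is then $\o$ (when both $U_i$ are $\o$) or, otherwise, the concatenation of the corresponding lists, which modulo commutativity, associativity and idempotency is again a nonempty $\sqcap$ of ${\mathbb T}$-types. It is convenient to record here an auxiliary observation used below: the assignment $C(\o)=\emptyset$, $C(T)=\{T\}$ for $T\in{\mathbb T}$, and $C(U\sqcap V)=C(U)\cup C(V)$ defines a function on quotient classes of ${\mathbb U}$ --- well-definedness holds because $\cup$ is commutative, associative, idempotent and has $\emptyset$ as neutral --- and, by item~\ref{omegaone}, one has $C(U)=\emptyset$ iff $U=\o$. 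Two consequences I will use: $U_1\sqcap U_2=\o$ forces $U_1=U_2=\o$, and no arrow type is equal, as a quotient class, to $\o$. Item~\ref{omegatwo} is then immediate: an instance of $\sqcap_e$ gives $\o\sqcap U\sqsubseteq\o$, and $\o\sqcap U=U$ by neutrality, so $U\sqsubseteq\o$.

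For item~\ref{omegathree} I would induct on the derivation of $\o\sqsubseteq U$, by cases on the last rule. The rules $\sqsubseteq_c$ and $\sqsubseteq_{\<\>}$ cannot occur, since their conclusions relate environments or typings rather than elements of ${\mathbb U}$. Rule $ref$ gives $U=\o$ at once. For $tr$, with premises $\o\sqsubseteq\Phi_2$ and $\Phi_2\sqsubseteq U$ (so $\Phi_2\in{\mathbb U}$), the induction hypothesis on the first premise gives $\Phi_2=\o$, and then on the second gives $U=\o$. Rule $\f$ cannot apply: its conclusion $U_1\f T_1\sqsubseteq U_2\f T_2$ would require the arrow type $U_1\f T_1$ to be the class $\o$, contradicting the auxiliary observation. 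For $\sqcap_e$, the conclusion $U_1\sqcap U_2\sqsubseteq U_1$ matches $\o\sqsubseteq U$ only if $U_1\sqcap U_2=\o$, so $U_1=\o$ and hence $U=U_1=\o$. For $\sqcap$, the conclusion $U_1\sqcap U_2\sqsubseteq V_1\sqcap V_2$ forces $U_1=U_2=\o$; the premises then read $\o\sqsubseteq V_1$ and $\o\sqsubseteq V_2$, so by the induction hypothesis $V_1=V_2=\o$, and $U=V_1\sqcap V_2=\o$.

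I expect the only delicate point to be the bookkeeping around the quotient: matching the conclusion of a rule against ``$\o\sqsubseteq U$'' must be done up to the congruence generated by commutativity, associativity, idempotency and neutrality of $\o$, and the heart of the $\sqcap_e$, $\sqcap$ and $\f$ cases is exactly that in this congruence $\o$ is indecomposable under $\sqcap$ from below and is distinct from every arrow type. The function $C$ isolates these facts, after which the induction is mechanical.
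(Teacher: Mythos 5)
Your proof is correct and follows essentially the same route as the paper: item 1 by structural induction on $U$, item 2 from the $\sqcap_e$ instance $\o\sqcap U\sqsubseteq\o$ together with neutrality, and item 3 by induction on the derivation of $\o\sqsubseteq U$. The auxiliary function $C$ merely makes explicit the quotient bookkeeping (indecomposability of $\o$ under $\sqcap$ and its distinctness from arrow types) that the paper's sketch leaves implicit.
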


\begin{proof}
\begin{itemize}
\item[\ref{omegaone}.]  By induction on $U \in  {\mathbb U}$.
\item[\ref{omegatwo}.] By rule $\sqcap_e$, $U = \o \sqcap U \sqsubseteq \o$. 
\item[\ref{omegathree}.] By
induction on the derivation  $\omega \sqsubseteq U$.
\end{itemize}
\end{proof}

The next lemma studies the relation $\sqsubseteq$ on ${\mathbb U}$.
\begin{lemma}\label{goodeg}
Let $V \neq \o$.
\begin{enumerate}
\item  \label{goodegone}
If $U \sqsubseteq V$, then $U = \sqcap_{j=1}^k T_j$, $V =
\sqcap_{i=1}^p T'_i$ where $p, k \geq 1 $, $\forall 1 \leq j\leq k$, $1\leq i \leq p$, $T_j, T'_i \in {\mathbb T}$,
 and $\forall~1 \leq i \leq
p$, $\exists 1 \leq j \leq k$ such that $T_j \sqsubseteq T'_i$.
\item\label{goodegone'}
If $U \sqsubseteq V' \sqcap a$, then $U = U'\sqcap a$ and $U'  \sqsubseteq V'$.
\item\label{goodegtwo'}
Let $p, k \geq 1$. If $\sqcap_{j=1}^k (U_j \f T_j) \sqsubseteq
\sqcap_{i=1}^p (U'_i \f T'_i)$, then  $\forall 1 \leq i \leq p$,
$\exists 1 \leq j \leq k$ such that $U'_i  \sqsubseteq U_j$ and
$T_j \sqsubseteq T'_i$.
\item\label{goodegtwo''}
If $U \f T \sqsubseteq V$, then $V = \sqcap_{i=1}^p (U_i \f T_i)$
where $p \geq 1$ and $\forall 1 \leq i \leq p$, $U_i \sqsubseteq
U$ and $T \sqsubseteq T_i$.
\item\label{goodegthree}
If $\sqcap_{j=1}^k (U_j \f T_j) \sqsubseteq V$ where $k \geq 1$,
then $V = \sqcap_{i=1}^p (U'_i \f T'_i)$ where $p \geq 1$ and
$\forall 1 \leq i \leq p$, $\exists 1 \leq j \leq k$ $U'_i
\sqsubseteq  U_j$ and $T_j \sqsubseteq T'_i$.

\end{enumerate}
\end{lemma}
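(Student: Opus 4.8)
The plan is to reduce all five clauses to two facts: clause~\ref{goodegone} itself, and a subformula-style statement for $\sqsubseteq$ restricted to ${\mathbb T}$ which I will call $(\star)$: if $T\sqsubseteq T'$ with $T,T'\in{\mathbb T}$, then either $T=T'=a$ for some $a\in{\cal A}$, or $T=U_1\f T_1$ and $T'=U_2\f T_2$ with $U_2\sqsubseteq U_1$ and $T_1\sqsubseteq T_2$. Granting these, clauses~\ref{goodegtwo'}, \ref{goodegtwo''} and \ref{goodegthree} are immediate: by clause~\ref{goodegone} each $T'_i$ on the right of the assumed inequality is dominated by some $T_j$ on the left, each such $T_j$ is a single arrow $U_j\f T_j$, and then $(\star)$ forces $T'_i$ to be an arrow with the stated contra/covariant inequalities (for clause~\ref{goodegtwo''} the left side has a unique conjunct). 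Clause~\ref{goodegone'} is similar: clause~\ref{goodegone} produces a conjunct $T_{j_0}$ of $U$ with $T_{j_0}\sqsubseteq a$, the atomic case of $(\star)$ forces $T_{j_0}=a$, hence $a$ is a conjunct of $U$ and $U=U\sqcap a$; and $U\sqsubseteq V'$ follows from $U\sqsubseteq V'\sqcap a$ by transitivity, using $V'\sqcap a\sqsubseteq V'$ (rule $\sqcap_e$, or $ref$ if $a$ already occurs in $V'$, or Lemma~\ref{omega}.\ref{omegatwo} when $V'=\o$).

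For clause~\ref{goodegone} I would induct on the derivation of $U\sqsubseteq V$. Since $V\neq\o$, Lemma~\ref{omega}.\ref{omegaone} gives $V=\sqcap_{i=1}^p T'_i$ with $p\geq 1$; also $U\neq\o$, since $\o\sqsubseteq V$ would give $V=\o$ by Lemma~\ref{omega}.\ref{omegathree}, so $U=\sqcap_{j=1}^k T_j$ with $k\geq 1$. The cases of the last rule are routine: $ref$ is trivial; for $\sqcap_e$ (where $V$ is one conjunct of $U$) take $T_j=T'_i$ with $ref$; for $\sqcap$ split a given $T'_i$ according to which premise it comes from, noting that the chosen premise has a non-$\o$ right-hand side, so the induction hypothesis applies and its left side is a conjunct-wise part of $U$; for $\f$ both sides are single arrows and the needed inequality is exactly the conclusion of the rule; for $tr$, say $U\sqsubseteq W\sqsubseteq V$, observe $W\neq\o$ (else $V=\o$), apply the induction hypothesis to both premises, and compose the witnesses with $tr$. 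It is worth recording the slight strengthening that the witnessing derivations of $T_j\sqsubseteq T'_i$ may be taken no larger in size than the given derivation: this is visible from the construction, the only composition being the $tr$ of two witnesses coming from strictly smaller premises.

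The delicate point is $(\star)$, and the obstacle is the $tr$ rule: a derivation of $T\sqsubseteq T'$ with both ends in ${\mathbb T}$ may pass through an intermediate $W\in{\mathbb U}$ that is an intersection of several ${\mathbb T}$-types (or $\o$), so a naive induction on the derivation stalls. I would resolve this by inducting on the size of the derivation and invoking the strengthened clause~\ref{goodegone}: in the $tr$ case, from the premise $W\sqsubseteq T'$ — a proper subderivation, with $W\neq\o$ hence $W=\sqcap_l W_l$ — clause~\ref{goodegone} yields a conjunct $W_0$ with $W_0\sqsubseteq T'$ via a derivation of size below the current one, and from the premise $T\sqsubseteq W$ it yields $T\sqsubseteq W_0$, again via a smaller derivation; applying the $(\star)$ induction hypothesis to these two and chaining the outcomes — atom with atom, or arrow with arrow using transitivity of $\sqsubseteq$ on domains and codomains — gives $(\star)$. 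The other cases of $(\star)$ are straightforward: $ref$ and $\f$ are immediate, and in the $\sqcap_e$/$\sqcap$ cases the assumption that both ends lie in ${\mathbb T}$ forces the relevant conjuncts to be single ${\mathbb T}$-types (a conjunct equal to $\o$ is ruled out by Lemma~\ref{omega}.\ref{omegathree}), reducing to a strictly smaller instance of $(\star)$.

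So the single real difficulty is making the $tr$ case of $(\star)$ go through; everything else is bookkeeping with Lemma~\ref{omega} and the shapes of the generating rules. An alternative that avoids the size accounting altogether is to define, by recursion on types, the binary relation that clause~\ref{goodegone} and $(\star)$ together describe, verify directly that it is reflexive and transitive (its arrow clause recurses into strictly smaller types, so transitivity is a plain structural induction) and that it is closed under $ref$, $tr$, $\sqcap_e$, $\sqcap$ and $\f$; then it contains $\sqsubseteq$ on ${\mathbb U}$ because $\sqsubseteq$ is the least such relation, the reverse inclusion is routine, the two coincide, and all five clauses are read off the definition.
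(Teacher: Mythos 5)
Your argument is correct, and it reorganises the lemma differently from the paper. The paper proves part 2 by its own induction on the derivation, and proves part 3 directly by induction on the derivation of $\sqcap_{j}(U_j \f T_j) \sqsubseteq \sqcap_{i}(U'_i \f T'_i)$: since that statement has intersections of arrows on \emph{both} sides, in the $tr$ case the intermediate type is first shown (via parts 1 and 2) to be an intersection of arrows, and then the structural induction hypothesis applies verbatim to the two premises of $tr$, so no size measure is ever needed; parts 4 and 5 are then read off from 1, 2 and 3, much as you do. You instead isolate the single-conjunct inversion statement $(\star)$ on ${\mathbb T}$ and derive parts 2--5 uniformly from it together with part 1; the price is that in the $tr$ case of $(\star)$ the judgements you feed to the induction hypothesis ($T \sqsubseteq W_0$ and $W_0 \sqsubseteq T'$) are not subderivations, which forces the induction on derivation size and the quantitative strengthening of part 1 (witnesses no larger than the given derivation). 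That strengthening does hold as you argue: the only composition performed is one application of $tr$ to witnesses extracted from the two premises, and in the $\f$ case the witness is the derivation itself, whence ``no larger'' rather than ``strictly smaller''. So the paper buys a plain structural induction by widening the statement of part 3, while you buy a cleaner, reusable inversion lemma and a uniform derivation of the remaining clauses at the cost of the size bookkeeping; your closing alternative (define the intended relation by recursion on types, check it is closed under the rules, and use leastness of $\sqsubseteq$) is also sound and removes the measure altogether. One cosmetic point: in the $\sqcap_e$ case of $(\star)$ there is no smaller instance to reduce to --- the quotient forces $T = T'$ there, and the case closes exactly as $ref$ does.
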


\begin{proof}
\begin{itemize}
\item[\ref{goodegone}.] By induction on the derivation $U \sqsubseteq V$ using lemma~\ref{omega}.\ref{omegaone}.
\item[\ref{goodegone'}.]  By induction on $U \sqsubseteq V' \sqcap a$.
\item[\ref{goodegtwo'}.]
By induction on $\sqcap_{j=1}^k (U_j \f T_j) \sqsubseteq \sqcap_{i=1}^p (U'_i \f T'_i) $.
We only do the {\it tr} case.\\
If $\F{\sqcap_{j=1}^k (U_j \f T_j) \sqsubseteq V \;\;\;\; V
\sqsubseteq \sqcap_{i=1}^p (U'_i \f T'_i)}{\sqcap_{j=1}^k (U_j \f
T_j) \sqsubseteq \sqcap_{i=1}^p (U'_i \f T'_i)}$, then, by
~\ref{goodegone}, $V = \sqcap_{l=1}^q T''_l$ where $q \geq 1$ and
$\forall 1 \leq l \leq q$, $\exists 1 \leq j \leq k$, such that
$U_j \f T_j \sqsubseteq T''_l$.  If $T''_l = a$, then, by
\ref{goodegone'},  $U_j \f T_j =  U'\sqcap a$. Absurd. Hence,
$\forall 1 \leq l \leq q$, $T''_l = V_l \f T'''_l$ and $V =
\sqcap_{l=1}^q (V_l \f T'''_l)$. 
Let $1 \leq i \leq p$.  By IH, 
$\exists 1 \leq l \leq q$, $U'_i \sqsubseteq V_l$ and $T'''_l
\sqsubseteq T'_i$. Also, by IH, 
$\exists 1 \leq j \leq k$, $V_l \sqsubseteq U_j$ and $T_j
\sqsubseteq T'''_l$.  
Hence, $\forall 1 \leq i \leq p$, $\exists 1
\leq j \leq k$, such that
$U'_i \sqsubseteq U_j$ and $T_j \sqsubseteq T'_i$.
\item[\ref{goodegtwo''}.] By~\ref{goodegone}, $V = \sqcap_{i=1}^p T'_i$
where $p \geq 1$ and $\forall 1 \leq i \leq p$, $U \f T
\sqsubseteq T'_i$.  If $T'_i = a$, then, by \ref{goodegone'},  $U
\f T =  U'\sqcap a$. Absurd. Hence, $T'_i = U_i \f T_i$.  Hence,
by~\ref{goodegtwo'}, $\forall 1 \leq i \leq p$,
$U_i \sqsubseteq  U$ and $T \sqsubseteq T_i$.
\item[\ref{goodegthree}.] Since $V \not = \o$, then, by
lemma~\ref{omega}.\ref{omegaone}, $V = \sqcap_{i=1}^p T'_i$ where
$p \geq 1$ and $\forall 1 \leq i \leq p$, $T'_i \in {\mathbb T}$.
Let $1 \leq i \leq p$.  By ~\ref{goodegone}, $\exists 1 \leq j_i
\leq k$ such that $U_{j_i} \f T_{j_i} \sqsubseteq T'_i$.
By~\ref{goodegtwo''}, and since $T'_i \in {\mathbb T}$, $T'_i =
U'_i \f T''_i$ where $U'_i \sqsubseteq U_{j_i}$ and $T_{j_i}
\sqsubseteq T''_i$.  Hence, $V = \sqcap_{i=1}^p (U'_i \f T''_i)$
where $p \geq 1$ and $\forall 1 \leq i \leq p$, $\exists 1 \leq
j_i \leq k$ $U'_i \sqsubseteq  U_{j_i}$ and $T_{j_i} \sqsubseteq
T''_i$.

\end{itemize}
\end{proof}

The next lemma studies the relation $\sqsubseteq$ on environments and typings.

\begin{lemma}\label{Phisub}
\begin{enumerate}
\item \label{Phisubone'} If $\G \sqsubseteq \G'$, then $dom(\G) = dom(\G')$.
\item\label{Phisubone}
If $\G \sqsubseteq \G'$, $U \sqsubseteq U'$ and $x \not \in dom(\G)$,
then $\G, x:U \sqsubseteq \G', x:U'$.
\item \label{Phisubtwo}
$\G \sqsubseteq \G'$ iff $\G = (x_i : U_i)_n$, $\G' = (x_i : U'_i)_n$ and for
every $1 \leq i \leq n$, $U_i \sqsubseteq U'_i$.
\item \label{Phisubtwo'}
If $dom(\G) = FV(M)$, then
$\G \sqsubseteq env_\o^M$
\item \label{Phisubtwo''}
If $env_\o^M \sqsubseteq \G$, then $\G = env_\o^M$.
\item \label{Phisubthree} $\<\G \v  U\> \sqsubseteq \<\G' \v  U'\>$ iff
$\G' \sqsubseteq \G$ and  $U \sqsubseteq U'$.
\item \label{Phisublast}
If $\G \sqsubseteq \G'$ and $\Delta  \sqsubseteq \Delta'$, then
$\G\sqcap \Delta \sqsubseteq \G'\sqcap \Delta'$.
\end{enumerate}
\end{lemma}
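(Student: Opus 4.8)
The plan is to establish the seven items, but not strictly in the order listed: I would first prove \ref{Phisubone'}, then the componentwise characterisation \ref{Phisubtwo}, and then derive \ref{Phisubone}, \ref{Phisubtwo'}, \ref{Phisubtwo''} and \ref{Phisublast} from it, handling \ref{Phisubthree} separately. The reason is that \ref{Phisubtwo} is the real workhorse: once we know that $\G \sqsubseteq \G'$ is equivalent to having the same domain and componentwise inequality of the types, almost everything else is a one-line consequence.

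For \ref{Phisubone'} I would do induction on the derivation of $\G \sqsubseteq \G'$. By the remark following the definition of $\sqsubseteq$, a derivation concluding an inequality between two environments can only end with $ref$, $tr$ or $\sqsubseteq_c$; the first is trivial, the second uses the induction hypothesis twice, and in $\sqsubseteq_c$ both sides are $\G, x:U_i$ and visibly have the same domain. For \ref{Phisubtwo}, the direction $\Leftarrow$ is where I expect the only genuine care to be needed: the rule $\sqsubseteq_c$ only rewrites the type of a \emph{single} entry of an environment (the ``last'' one in the $\G, x:U$ notation), so to pass from $\G = (x_i:U_i)_n$ to $\G' = (x_i:U'_i)_n$ one must exploit that environments are sets — reorder so that $x_i$ is the last entry, apply $\sqsubseteq_c$ to change $U_i$ into $U'_i$, and chain the $n$ single steps $\G = \G^{(0)} \sqsubseteq \G^{(1)} \sqsubseteq \cdots \sqsubseteq \G^{(n)} = \G'$ with $tr$, where $\G^{(i)}$ agrees with $\G'$ on $x_1,\dots,x_i$ and with $\G$ elsewhere. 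The direction $\Rightarrow$ is again induction on the derivation, using \ref{Phisubone'} to guarantee that at each stage the two environments share a domain (so that the componentwise statement is even meaningful): $ref$ gives $U_i \sqsubseteq U'_i$ by $ref$, $tr$ composes componentwise inequalities by $tr$, and $\sqsubseteq_c$ is immediate with $ref$ on the unchanged entries.

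From \ref{Phisubtwo} the rest is short. Item \ref{Phisubone} follows by adding the entry $x:U \sqsubseteq x:U'$ to the componentwise data and re-applying \ref{Phisubtwo}. Item \ref{Phisubtwo'} follows from \ref{Phisubtwo} together with lemma~\ref{omega}.\ref{omegatwo} (each $U_i \sqsubseteq \o$), and \ref{Phisubtwo''} from \ref{Phisubtwo} together with lemma~\ref{omega}.\ref{omegathree} ($\o \sqsubseteq U_i$ forces $U_i = \o$, so $\G = env_\o^M$). For \ref{Phisubthree}, the direction $\Leftarrow$ is literally an instance of the rule $\sqsubseteq_{\<\>}$; the direction $\Rightarrow$ is induction on the derivation of $\<\G \v U\> \sqsubseteq \<\G' \v U'\>$, whose last rule can only be $ref$, $tr$ or $\sqsubseteq_{\<\>}$, the transitivity case using transitivity of $\sqsubseteq$ on both ${\mathbb U}$ and environments. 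Finally \ref{Phisublast}: by \ref{Phisubone'} the domains of $\G,\G'$ and of $\Delta,\Delta'$ agree, so by \ref{Phisubtwo} it suffices to check components of $\G \sqcap \Delta$ against $\G' \sqcap \Delta'$; for a variable $y$ lying in both domains the entries are $U_y \sqcap V_y$ and $U'_y \sqcap V'_y$ and we conclude by the rule $\sqcap$ from $U_y \sqsubseteq U'_y$ and $V_y \sqsubseteq V'_y$, while for a variable in only one of the two domains the required inequality is one of those we already have.

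To summarise the expected difficulty: the only step that is more than a mechanical induction on derivations is the $\Leftarrow$ direction of \ref{Phisubtwo}, precisely because $\sqsubseteq_c$ modifies one entry at a time and forces us to use the set-theoretic (unordered) nature of environments together with repeated use of $tr$; all the remaining arguments are routine, driven by the observation that $\sqsubseteq$ never relates objects from two different syntactic categories, so that in each induction only three rules are ever applicable.
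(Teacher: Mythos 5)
Your proposal is correct and follows essentially the same route as the paper: induction on the derivation for the one-directional claims, the componentwise characterisation of $\sqsubseteq$ on environments as the workhorse (its right-to-left direction obtained by changing one entry at a time with $\sqsubseteq_c$ and chaining the steps with $tr$), and items~\ref{Phisubtwo'}, \ref{Phisubtwo''}, \ref{Phisublast} read off as corollaries using lemma~\ref{omega}. The only difference is the dependency order: the paper proves item~\ref{Phisubone} first (by the same one-entry-at-a-time argument) and then gets the right-to-left direction of item~\ref{Phisubtwo} by induction on $n$ using it, whereas you prove item~\ref{Phisubtwo} directly and derive item~\ref{Phisubone} from it — a harmless reorganisation.
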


\begin{proof}
\begin{itemize}
\item[\ref{Phisubone'}.] By induction on the derivation $\G \sqsubseteq \G'$. 
\item[\ref{Phisubone}.] First show, by induction on the derivation $\G
\sqsubseteq \G'$ (using \ref{Phisubone'}), that if  $\G
\sqsubseteq \G'$, $V \in {\mathbb U}$ and $y \not \in dom(\G)$
then $\G, y:V \sqsubseteq \G',  y:V$. Then use tr.
\item[\ref{Phisubtwo}.]
Only if) By  \ref{Phisubone'}, $\G = (x_i : U_i)_n$ and $\G' = (x_i : U_i)_n$. The proof is
by induction on the derivation $(x_i : U_i)_n \sqsubseteq (x_i :
U'_i)_n$. If)  By induction on $n$ using  \ref{Phisubone}.
\item[\ref{Phisubtwo'}.]
Let $FV(M) =\{x_1, \dots, x_n\}$ and  $\G = (x_i :U_i)_n$. By definition,
$env_\o^M = (x_i,\o)_n$.  Hence, by lemma~\ref{omega}.\ref{omegatwo} and \ref{Phisubtwo},  $\G \sqsubseteq env_\o^M$.
\item[\ref{Phisubtwo''}.]
Let $FV(M) =\{x_1, \dots, x_n\}$.
By definition,
$env_\o^M = (x_i,\o)_n$.  By  \ref{Phisubtwo},   $\G = (x_i :U_i)_n$ and $\forall 1 \leq i \leq n$, $\omega \sqsubseteq U_i$. Hence by lemma~\ref{omega}.\ref{omegathree}, $\forall 1 \leq i \leq n$, $\omega = U_i$.
\item[\ref{Phisubthree}.]
Only if) By induction on the derivation $\<\G \v U\> \sqsubseteq \<\G' \v U'\>$. If) By $\sqsubseteq_{\<\>}$.
\item[\ref{Phisublast}.] 
 This is a corollary of~\ref{Phisubtwo}.
\end{itemize}
\end{proof}

The next lemma shows that we do not allow weakening in our type system.
\begin{lemma}\label{structyping}
\begin{enumerate}
\item \label{structypingone} If $M: \<\G \v U\>$, then $dom(\G) = FV(M)$.
\item \label{structypingtwo} For every $\G$ and $M$ such that $dom(\G) = FV(M)$, we have  $M: \<\G \v
\o\>$.
\end{enumerate}
\end{lemma}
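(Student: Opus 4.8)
The plan is to prove the two parts of Lemma~\ref{structyping} together, since part~\ref{structypingtwo} will lean on part~\ref{structypingone}. For part~\ref{structypingone} I would argue by induction on the derivation of $M : \<\G \v U\>$, proceeding by cases on the last rule applied. The axiom rule $ax$ gives $x : \<x:T \v T\>$, and indeed $dom(x:T) = \{x\} = FV(x)$. The rule $\o$ gives $M : \<env^M_\o \v \o\>$, and by definition $dom(env^M_\o) = FV(M)$. For $\f_i$, from $M : \<\G, x:U \v T\>$ we have by IH $dom(\G)\cup\{x\} = FV(M)$, so $x \in FV(M)$ and $dom(\G) = FV(M)\setminus\{x\} = FV(\l x.M)$. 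For $\f'_i$, from $M : \<\G \v T\>$ with $x\notin dom(\G)$ we get by IH $dom(\G) = FV(M)$, hence $x\notin FV(M)$, so $FV(\l x.M) = FV(M) = dom(\G) = dom(\G)$ as required (note $\G$ is unchanged in the conclusion). For $\f_e$, from $M_1 : \<\G_1 \v U\f T\>$ and $M_2 : \<\G_2 \v U\>$ the IH gives $dom(\G_i) = FV(M_i)$, and since $dom(\G_1\sqcap\G_2) = dom(\G_1)\cup dom(\G_2)$ (stated in the definition of $\sqcap$ on environments) this equals $FV(M_1)\cup FV(M_2) = FV(M_1 M_2)$. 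For $\sqcap_i$ the environment and term are unchanged, so the IH applies directly. For $\sqsubseteq$, from $M : \<\G \v U\>$ and $\<\G \v U\> \sqsubseteq \<\G' \v U'\>$, Lemma~\ref{Phisub}.\ref{Phisubthree} gives $\G' \sqsubseteq \G$, and then Lemma~\ref{Phisub}.\ref{Phisubone'} gives $dom(\G') = dom(\G)$, which by IH equals $FV(M)$.

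For part~\ref{structypingtwo}, given $\G$ with $dom(\G) = FV(M)$, I would first apply the rule $\o$ to obtain $M : \<env^M_\o \v \o\>$. Then I need to weaken $env^M_\o$ down to $\G$ via the $\sqsubseteq$ rule: by Lemma~\ref{omega}.\ref{omegatwo} every $U_i \sqsubseteq \o$, and since $dom(\G) = dom(env^M_\o)$, Lemma~\ref{Phisub}.\ref{Phisubtwo} (or \ref{Phisubtwo'}) gives $\G \sqsubseteq env^M_\o$. Combined with $\o \sqsubseteq \o$ (by $ref$), rule $\sqsubseteq_{\<\>}$ yields $\<env^M_\o \v \o\> \sqsubseteq \<\G \v \o\>$, and one application of the $\sqsubseteq$ typing rule gives $M : \<\G \v \o\>$.

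I do not expect any serious obstacle here; the only points requiring a little care are making sure the free-variable bookkeeping in the $\f_i$ versus $\f'_i$ cases is exactly right (in particular that $\f_i$ really does force $x \in FV(M)$ via the IH, which is what makes the system relevant), and invoking the correct sub-part of Lemma~\ref{Phisub} in the $\sqsubseteq$ case. The mild subtlety worth flagging is that part~\ref{structypingone} is precisely the "relevance" property advertised in the introduction — it is the conjunction of "no weakening" and "no discarding" — so the $\o$ rule's built-in environment $env^M_\o$ and the absence of a free weakening rule are exactly what the induction exploits.
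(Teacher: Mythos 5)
Your proof is correct and follows essentially the same route as the paper: part 1 by induction on the typing derivation (your case analysis, including the use of Lemma~\ref{Phisub}.\ref{Phisubthree} and \ref{Phisubone'} in the $\sqsubseteq$ case, fills in exactly what the paper leaves as a one-line sketch), and part 2 by rule $\o$ followed by Lemma~\ref{Phisub}.\ref{Phisubtwo'} and the rules $\sqsubseteq_{\<\>}$ and $\sqsubseteq$, which is precisely the paper's argument.
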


\begin{proof}
\begin{itemize}
\item[\ref{structypingone}.] By induction on the derivation $M: \<\G \v U\>$. 
\item[\ref{structypingtwo}.] By
$\o$, $M: \<env_\o^M \v \o\>$. By lemma~\ref{Phisub}.\ref{Phisubtwo'},
$\G \sqsubseteq env_\o^M$.  Hence, by  $\sqsubseteq$ and $\sqsubseteq_{\<\>}$, $M: \<\G \v \o\>$.
\end{itemize}
\end{proof}

Finally, it may come as a surprise that the rule {\it ax} uses types in ${\mathbb T}$ instead of ${\mathbb U}$ and that in  the rule $\sqcap$ we take the same environment.  The lemma below shows that this is not restrictive.
\begin{lemma}\label{newrules}
\begin{enumerate}
\item\label{newrulesone}
The rule
$\F{M: \<\G_1 \v  U_1\> \;\;\;\hspace{0.2in} M : \<\G_2 \v  U_2\>}
{M : \<\G_1 \sqcap \G_2 \v  U_1 \sqcap U_2\>}\;\;\;\sqcap'_i$  is derivable.
\item\label{newrulestwo}
The rule $\F{}{x : \<(x : U) \v U\>}\;\;\;ax'$ is derivable.
\end{enumerate}
\end{lemma}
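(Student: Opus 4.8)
The plan is to derive each of the two rules from the primitive rules $ax$, $\o$, $\sqcap_i$ and $\sqsubseteq$, using the subtyping facts already established in Lemmas~\ref{omega} and \ref{Phisub}. The point is that the primitive rules $ax$ and $\sqcap_i$ are deliberately restricted (to types in ${\mathbb T}$, resp.\ to two premises sharing one and the same environment), so in each case the real work is to first \emph{massage the environments and types into the required shape} by applications of $\sqsubseteq$, and only then to apply the restricted rule. I would prove part~\ref{newrulestwo} first, though in fact neither part depends on the other.

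For part~\ref{newrulestwo} ($ax'$), I would split on the shape of $U$ given by Lemma~\ref{omega}.\ref{omegaone}. If $U=\o$, then $FV(x)=\{x\}$ gives $env^x_\o=(x:\o)$, so the rule $\o$ yields $x:\<(x:\o)\v\o\>$ at once. If $U=\sqcap_{i=1}^n T_i$ with $n\geq 1$ and each $T_i\in{\mathbb T}$, then for each $i$ the rule $ax$ gives $x:\<(x:T_i)\v T_i\>$; since $U\sqsubseteq T_i$ by $\sqcap_e$ (modulo commutativity and associativity of $\sqcap$), rule $\sqsubseteq_c$ gives $(x:U)\sqsubseteq(x:T_i)$, hence by $\sqsubseteq_{\<\>}$ together with $ref$ we get $\<(x:T_i)\v T_i\>\sqsubseteq\<(x:U)\v T_i\>$, and rule $\sqsubseteq$ then yields $x:\<(x:U)\v T_i\>$. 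These $n$ typings all carry the same environment $(x:U)$, so $n-1$ applications of $\sqcap_i$ produce $x:\<(x:U)\v\sqcap_{i=1}^n T_i\>=x:\<(x:U)\v U\>$.

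For part~\ref{newrulesone} ($\sqcap'_i$), assume $M:\<\G_1\v U_1\>$ and $M:\<\G_2\v U_2\>$. By Lemma~\ref{structyping}.\ref{structypingone}, $dom(\G_1)=FV(M)=dom(\G_2)$, so $\G_1=(x_i:V_i)_n$ and $\G_2=(x_i:W_i)_n$ and $\G_1\sqcap\G_2=(x_i:V_i\sqcap W_i)_n$ over the same domain. Since $V_i\sqcap W_i\sqsubseteq V_i$ by $\sqcap_e$, Lemma~\ref{Phisub}.\ref{Phisubtwo} gives $\G_1\sqcap\G_2\sqsubseteq\G_1$, and likewise $\G_1\sqcap\G_2\sqsubseteq\G_2$; by Lemma~\ref{Phisub}.\ref{Phisubthree} (with $U_j\sqsubseteq U_j$ by $ref$) we get $\<\G_1\v U_1\>\sqsubseteq\<\G_1\sqcap\G_2\v U_1\>$ and $\<\G_2\v U_2\>\sqsubseteq\<\G_1\sqcap\G_2\v U_2\>$. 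Applying rule $\sqsubseteq$ to each premise gives $M:\<\G_1\sqcap\G_2\v U_1\>$ and $M:\<\G_1\sqcap\G_2\v U_2\>$, which now share an environment, so rule $\sqcap_i$ concludes $M:\<\G_1\sqcap\G_2\v U_1\sqcap U_2\>$.

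There is no genuine obstacle here; the only thing to watch is that the environment $\G_1\sqcap\G_2$ produced by the general definition of $\sqcap$ on environments coincides with the pointwise meet precisely because $dom(\G_1)=dom(\G_2)$, which is why invoking the relevance property (Lemma~\ref{structyping}.\ref{structypingone}) at the outset of part~\ref{newrulesone} is essential rather than cosmetic.
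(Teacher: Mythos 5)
Your proof is correct and follows essentially the same route as the paper: part~\ref{newrulesone} is the paper's argument verbatim (relevance via lemma~\ref{structyping}.\ref{structypingone}, pointwise $\sqcap_e$ with lemma~\ref{Phisub}.\ref{Phisubtwo} to get $\G_1 \sqcap \G_2 \sqsubseteq \G_1, \G_2$, then $\sqsubseteq$ and $\sqcap_i$). The only difference is in part~\ref{newrulestwo}, where the paper finishes the non-$\o$ case by $k-1$ applications of the just-derived rule $\sqcap'_i$, whereas you coerce each instance $x : \<(x:T_i) \v T_i\>$ into the common environment $(x:U)$ via $\sqcap_e$, $\sqsubseteq_c$ and $\sqsubseteq_{\<\>}$ and then use the primitive $\sqcap_i$; this makes the two parts independent but is otherwise the same idea, and both versions are sound.
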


\begin{proof}
\begin{itemize}
\item[\ref{newrulesone}.] Let $M: \<\G_1 \v  U_1\>$ and $M : \<\G_2 \v  U_2\>$. By
lemma~\ref{structyping}, $dom(\G_1) = dom(\G_2) = FV(M)$.  Let $\G_1 = (x_i: V_i)_n$ and $\G_2 = (x_i: V'_i)_n$.  Hence, $\G_1 \sqcap \G_2 = (x_i: V_i \sqcap V'_i)_n$.
By $ V_i \sqcap V'_i \sqsubseteq V_i$ and $ V_i \sqcap V'_i \sqsubseteq V'_i$ for all $1 \leq i \leq n$.  Hence,
by lemma~\ref{Phisub}.\ref{Phisubtwo},
$\G_1 \sqcap \G_2 \sqsubseteq\G_1$ and
$\G_1 \sqcap \G_2 \sqsubseteq\G_2$, and, by rules $\sqsubseteq$
and $\sqsubseteq_{\<\>}$, $M :\<\G_1 \sqcap \G_2, U_1\>$ and $M
:\<\G_1 \sqcap \G_2, U_2\>$. Finally, by rule $\sqcap_i$,
$M :\<\G_1 \sqcap \G_2, U_1\sqcap U_2\>$.
\item[\ref{newrulestwo}.]
By lemma~\ref{omega}.\ref{omegaone}:
\begin{itemize}
\item
Either  $U = \o$, then, by rule $\o$, we have $x:\<(x : \o)\v
\omega\>$.
\item
Or $U = \sqcap_{i=1}^k T_i$ where $\forall 1 \leq i
\leq k$, $T_i \in {\mathbb T}$, then, by rule $ax$, $x : \<(x :
T_i) \v T_i\>$ and, by $k-1$ applications of rule $\sqcap'_i$, $x
: \<(x : U) \v U\>$.
\end{itemize}
\end{itemize}
\end{proof}

\section{Subject reduction and expansion properties}
\label{subredsec}
In this section we establish the subject reduction and subject expansion properties for $\beta$.
\subsection{Subject reduction for $\beta$}
We start with a form of the generation lemma.

\begin{lemma}[Generation]\label{newgen}
\begin{enumerate}
\item\label{newgenone} If $x : \<\G \v  U \>$, then $\G = (x : V)$ and $V \sqsubseteq U$.
\item\label{newgenfour} If $M \; x  : \<\G , x : U \v  V\>$ and $x \not \in
FV(M)$,
then $V = \o$ or $V = \sqcap_{i=1}^k T_i$ where $k \geq 1$ and
$\forall 1 \leq i \leq k$, $M  : \<\G \v  U \f T_i\>$.
\item\label{newgentwo} If $\l x.M : \<\G \v  U\>$ and $x \in FV(M)$, then $U = \o$ or $U =
\sqcap_{i=1}^k (V_i \f T_i)$ where $k \geq 1$ and $\forall 1 \leq
i \leq k$, $M  : \<\G , x : V_i \v T_i\>$.
\item\label{newgenthree} If $\l x.M : \<\G \v  U\>$ and $x \not \in FV(M)$, then $U = \o$ or $U =
\sqcap_{i=1}^k (V_i \f T_i)$ where $k \geq 1$ and $\forall 1 \leq
i \leq k$, $M  : \<\G \v T_i\>$.
\end{enumerate}
\end{lemma}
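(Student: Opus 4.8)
The plan is to prove all four parts by induction on the typing derivation, keying on the last rule applied. For each part we want to track how the rules $ax$, $\o$, $\f_i$, $\f'_i$, $\f_e$, $\sqcap_i$, and $\sqsubseteq$ can conclude a judgement with the given term on the left. Many cases are immediate; the substantive work is isolating the $\sqsubseteq$ case and the interaction with $\sqcap_i$.

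For part \ref{newgenone}: the only rules that can type a variable $x$ are $ax$ (giving $x:\<x:T\v T\>$, so $\G=(x:T)$ and $T\sqsubseteq T$ by $ref$), $\o$ (giving $x:\<x:\o\v\o\>$ since $FV(x)=\{x\}$, and $\o\sqsubseteq\o$), $\sqcap_i$ (two premises $x:\<\G\v U_i\>$; by IH each $\G=(x:V)$ with $V\sqsubseteq U_i$, hence $V\sqsubseteq U_1\sqcap U_2$ by rule $\sqcap$ and $ref$ — but we must first reconcile that both premises share the same environment, which they do by the shape of the rule), and $\sqsubseteq$ (premise $x:\<\G\v U\>$ with $\<\G\v U\>\sqsubseteq\<\G'\v U'\>$; apply IH to get $\G=(x:V)$, $V\sqsubseteq U$, then use Lemma \ref{Phisub}.\ref{Phisubthree} to get $\G'\sqsubseteq\G=(x:V)$, so by Lemma \ref{Phisub}.\ref{Phisubtwo} $\G'=(x:V')$ with $V'\sqsubseteq V$; then transitivity $V'\sqsubseteq V\sqsubseteq U\sqsubseteq U'$). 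For part \ref{newgenfour}, I would analyse a derivation of $M\,x:\<\G,x:U\v V\>$ with $x\notin FV(M)$; the relevant rules are $\o$ (then $V=\o$), $\f_e$ (then the last step typed $M:\<\G_1\v W\f T\>$ and $x:\<\G_2\v W\>$ with $\G_1\sqcap\G_2=\G,x:U$; by Lemma \ref{structyping}.\ref{structypingone} and $x\notin FV(M)$, $x\notin dom(\G_1)$, so the $x$-binding comes entirely from $\G_2$, giving by part \ref{newgenone} $\G_2=(x:W')$ with $W'\sqsubseteq W$ and $\G_1=\G$, $U=W'$; then $M:\<\G\v W\f T\>$ and by rule $\sqsubseteq$ with the $\f$ rule ($U=W'\sqsubseteq W$, $T\sqsubseteq T$) we get $M:\<\G\v U\f T\>$, so $k=1$), $\sqcap_i$ (combine the two IH outputs, using that $\sqcap$ distributes correctly over the list of $T_i$'s), and $\sqsubseteq$ (use Lemma \ref{goodeg}.\ref{goodegthree} to decompose the supertype as an intersection of arrows and push the inequalities through). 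Parts \ref{newgentwo} and \ref{newgenthree} are analogous: a term $\l x.M$ can only be concluded by $\f_i$, $\f'_i$, $\o$, $\sqcap_i$, or $\sqsubseteq$, and the case $x\in FV(M)$ versus $x\notin FV(M)$ determines which introduction rule is available (by Lemma \ref{structyping}.\ref{structypingone}). In the $\sqsubseteq$ case one again invokes Lemma \ref{goodeg}.\ref{goodegthree} to write $U=\sqcap_{i=1}^p(V'_i\f T'_i)$ with the arrows dominated appropriately, combined with Lemma \ref{Phisub}.\ref{Phisubthree} for the environment side, and for rule $\f$ one exploits the contravariance ($V'_i\sqsubseteq V_j$ lets one weaken the hypothesis $x:V_j$ down to $x:V'_i$ via rule $\sqsubseteq$ and $\sqsubseteq_c$).

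The main obstacle I anticipate is the bookkeeping in the $\sqsubseteq$ cases of parts \ref{newgenfour}--\ref{newgenthree}: after the IH one has a type that is an intersection $\sqcap_{i=1}^k T_i$ of arrows (or the given arrow), but the supertype $U'$ need only satisfy $U\sqsubseteq U'$, so one must first rule out that $U'$ is atomic or contains an atom (handled via Lemma \ref{goodeg}.\ref{goodegone'}: an arrow-or-intersection-of-arrows cannot be $\sqsubseteq$ an $a$), then use Lemma \ref{goodeg}.\ref{goodegthree} to match each component $V'_i\f T'_i$ of $U'$ to some $V_j\f T_j$ with $V'_i\sqsubseteq V_j$ and $T_j\sqsubseteq T'_i$, and finally rebuild the per-component typing $M:\<\G',x:V'_i\v T'_i\>$ (resp. $M:\<\G'\v T'_i\>$) from $M:\<\G,x:V_j\v T_j\>$ by applying rule $\sqsubseteq$ through $\sqsubseteq_{\<\>}$ with the environment inclusion and the two type inclusions. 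Keeping the quantifier alternation straight — "for every component of the conclusion there exists a component of the premise" — is where care is needed, but it is exactly the form delivered by Lemma \ref{goodeg}.\ref{goodegthree}, so no genuinely new idea is required.
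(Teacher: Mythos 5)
Your plan follows essentially the same route as the paper's proof: induction on the typing derivation with a case split on the last rule, using lemma~\ref{structyping}.\ref{structypingone} to exclude the inapplicable introduction rules, lemma~\ref{Phisub}.\ref{Phisubthree} (with \ref{Phisub}.\ref{Phisubtwo}) for the environment side, and the decomposition lemmas of lemma~\ref{goodeg} together with rule $\sqsubseteq_{\<\>}$ to push the component-wise inequalities back into typings in the subsumption case; the handling of $\f_e$ in part~\ref{newgenfour} via part~\ref{newgenone} and contravariance is exactly the paper's argument. One small correction: in the $\sqsubseteq$ case of part~\ref{newgenfour} the components $T_i$ of $V$ are arbitrary members of ${\mathbb T}$ (possibly atoms), so the decomposition there comes from lemma~\ref{goodeg}.\ref{goodegone}, not lemma~\ref{goodeg}.\ref{goodegthree} (which is the right tool only for parts~\ref{newgentwo} and~\ref{newgenthree}); with that substitution your argument goes through unchanged.
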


\begin{proof}
\ref{newgenone}.\ By induction on  the derivation $x : \<\G \v  U \>$. We have
four cases:
\begin{itemize}
\item If $\F{}{x:\<(x : T) \v T\>}$, nothing to prove.
\item If $\F{}{x:\<(x : \o) \v \o\>}$, nothing to prove.
\item Let $\F{x:\<\G \v U_1\>\;\;\;x:\<\G \v U_2\>}{x:\<\G \v U_1 \sqcap
U_2\>}$. By IH, $\G = (x : V)$, $V \sqsubseteq U_1$ and $V
\sqsubseteq U_2$, then, by rule $\sqcap$, $ V \sqsubseteq U_1
\sqcap U_2$.
\item Let $\F{x:\<\G' \v U'\>\;\;\;\<\G' \v U'\> \sqsubseteq \<\G \v U\> }{x:\< \G \v
U\>}$. By lemma~\ref{Phisub}.\ref{Phisubthree}, $\G \sqsubseteq \G'$ and $U'
\sqsubseteq U$ and, by IH, $\G' = (x : V')$ and $V' \sqsubseteq
U'$. Then, by lemma~\ref{Phisub}.\ref{Phisubtwo}, $\G = (x : V)$, $V
\sqsubseteq V'$ and, by rule $tr$, $V \sqsubseteq U$.
\end{itemize}
\ref{newgenfour}.\ By induction on the derivation $M \, x : \<\G ,x : U \v V\>$.
We have four cases:
\begin{itemize}
\item
If $\F{}{M \; x : \<env_\o^{M \; x} \v \omega\>}$, nothing to
prove.

\item
Let $\F{M: \<\G \v  U \f T\> \;\;\; x : \<(x : V)  \v  U\>}{M \, x
: \<\G , x : V \v T\>}$ (where, by 1.\, $V \sqsubseteq U$).

Since $U \f T \sqsubseteq V \f T$, we have $M: \<\G \v V \f T\>$.

\item
Let $\F{M \; x: \<\G, x:U  \v  U_1\> \; \; M \; x : \<\G, x:U \v
U_2\>} {M \; x : \<\G, x:U  \v  U_1 \sqcap U_2\>}$. By IH, we have four
cases:
\begin{itemize}
\item If $U_1 = U_2 = \omega$, then $U_1 \sqcap U_2 = \omega$.
\item If $U_1 = \omega$, $U_2 = \sqcap_{i=1}^k T_i$, $k \geq 1$ and
$\forall 1 \leq i \leq k$, $M  : \<\G \v  U \f T_i\>$, then $U_1
\sqcap U_2 = U_2$ ($\o$ is a neutral element).
\item If $U_2 = \omega$, $U_1 = \sqcap_{i=1}^k T_i$, $k \geq 1$ and
$\forall 1 \leq i \leq k$, $M  : \<\G \v  U \f T_i\>$, then $U_1
\sqcap U_2 = U_1$ ($\o$ is a neutral element).
\item If $U_1 =  \sqcap_{i=1}^k T_i$ and $U_2 = \sqcap_{i=1}^l
T_{k+i}$ (hence $U_1 \sqcap U_2 = \sqcap_{i=1}^{k + l} T_i$) ,
where $k,l \geq 1$ and $\forall 1 \leq i \leq k+l$, $M : \<\G \v U
\f T_i\>$.
\end{itemize}
\item
Let $\F{M \; x : \<\G', x:U' \v V'\> \;\;\; \<\G', x:U' \v V'\>
\sqsubseteq \<\G, x:U \v V\>}{M \; x : \<\G, x :U \v V\>}$ (by
lemma~\ref{Phisub}).

By lemma~\ref{Phisub}, $\G \sqsubseteq \G'$, $U\sqsubseteq
U'$ and $V' \sqsubseteq V$. By IH, we have two cases:
\begin{itemize}
\item If $V' = \omega$, then, by
lemma~\ref{omega}.\ref{omegathree}, $V = \omega$.
\item If $V' = \sqcap_{i=1}^k T'_i$, where $k\geq 1$ and $\forall 1 \leq
i \leq k$, $M : \<\G \v  U \f T'_i\>$. By lemma~\ref{goodeg}.\ref{goodegone},
$V = \omega$ (nothing to prove) or $V = \sqcap_{i=1}^p T_i$ where
$p \geq 1$ and $\forall 1 \leq i \leq p$, $\exists 1 \leq j_i \leq k$ such that
$T'_{j_i} \sqsubseteq
T_i$. Since, by lemma~\ref{Phisub}.\ref{Phisubthree}, $\<\G' \v  U' \f T'_{j_i}
\>\sqsubseteq \<\G \v U \f T_i \>$ for any $1 \leq i \leq p$, then
$\forall 1 \leq i \leq p$, $M : \<\G \v  U \f T_i\>$.
\end{itemize}
\end{itemize}
\ref{newgentwo}.\ By induction on  the derivation $\l x.M : \<\G \v  U\>$. We
have four cases:
\begin{itemize}
\item
If $\F{}{\l x.M : \<env_\o^{\l x.M} \v  \o\>}$, nothing to prove.
\item
If $\F{M : \<\G, x : U \v  T\>}{\l x. M : \<\G \v U \f T\>}$,
nothing to prove.
\item
Let $\F{\l x.M: \<\G \v  U_1\> \; \; \l x.M : \<\G \v  U_2\>} {\l
x.M : \<\G \v  U_1 \sqcap U_2\>}$. By IH, we have four cases:
\begin{itemize}
\item If $U_1 = U_2 = \o$, then $U_1 \sqcap U_2 = \o$.
\item If $U_1 = \omega$, $U_2 = \sqcap_{i=1}^k (V_i \f T_i)$
where $k \geq 1$ and $\forall 1 \leq i \leq k$, \\ $M  : \<\G_2,x :
V_i \v T_i\>$, then $U_1 \sqcap U_2 = U_2$ ($\o$ is a neutral element).
\item If $U_2 = \omega$, $U_1 = \sqcap_{i=1}^k (V_i \f T_i)$
where $k \geq 1$ and $\forall 1 \leq i \leq k$, \\
$M  : \<\G_1 ,x :
V_i \v T_i\>$, then $U_1 \sqcap U_2 = U_1$ ($\o$ is a neutral element).
\item If $U_1 = \sqcap_{i= 1}^k (V_i
\f T_i)$,  $U_2 = \sqcap_{i= k+1}^{k+l} (V_{i} \f T_{i})$ (hence
$U_1\sqcap U_2 = \sqcap_{i= 1}^{k+l} (V_i \f T_i)$) where $k,l
\geq 1$, $\forall 1 \leq i \leq k+l$, $M : \<\G , x : V_i \v T_i\>$, we are done.
\end{itemize}
\item
Let $\F{\l x.M : \<\G\v U\> \;\;\;  \<\G\v U\> \sqsubseteq \<\G'\v
U'\>}{\l x.M : \<\G'\v U'\>}$.  By lemma~\ref{Phisub}.\ref{Phisubthree}, $\G'
\sqsubseteq \G$ and $U \sqsubseteq U'$. By IH, we have two cases:
\begin{itemize}
\item If $U = \omega$, then, by
lemma~\ref{omega}.\ref{omegathree}, $U' = \omega$.
\item Assume $U = \sqcap_{i=1}^k (V_i \f T_i)$, where $k\geq
1$ and $M : \<\G , x : V_i \v  T_i\>$ for all $1 \leq i \leq k$.
By lemma~\ref{omega}.\ref{omegaone}:
\begin{itemize}
\item
Either $U' = \omega$, and hence nothing to prove.
\item
Or, by lemma~\ref{goodeg}.\ref{goodegthree}, $U' = \sqcap_{i=1}^p
(V'_i \f T'_i)$, where $p\geq 1$ and $\forall 1 \leq i \leq p$,
$\exists 1 \leq j_i \leq k$ such that $V'_i \sqsubseteq V_{j_i}$
and $T_{j_i} \sqsubseteq T'_i$. Let $1 \leq i \leq p$. Since, by
lemma~\ref{Phisub}.\ref{Phisubthree}, $\<\G ,x : V_{j_i} \v
T_{j_i} \>\sqsubseteq \<\G' , x : V'_i \v T'_i \>$, then $M :
\<\G', x : V'_i \v T'_i\>$.
\end{itemize}
\end{itemize}
\end{itemize}
\ref{newgenthree}.\ Same proof as that of \ref{newgentwo}.
\end{proof}
Now, we establish the substitution lemma.
\begin{lemma}[Substitution]
\label{substlem} If $M:\<\G,x:U\v  V\>$ and $N :\<\D \v  U\>$,\\
then $M[x:=N]:\<\G\sqcap \D \v V\>$.
\end{lemma}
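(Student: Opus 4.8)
The plan is to prove the Substitution Lemma by induction on the derivation of $M:\<\G,x:U\v V\>$, mimicking the usual substitution argument for intersection type systems but adapted to the $\<\G\v U\>$ notation and the environment operation $\sqcap$. Before the induction I would record a preliminary observation: if $x\notin FV(M)$ then by lemma~\ref{structyping}.\ref{structypingone} we have $x\notin dom(\G,x:U)$, a contradiction, so in every axiom-shaped base case $x$ must actually be the subject; and conversely when $M[x:=N]$ is formed we may need to adjust domains using lemma~\ref{Phisub}.\ref{Phisubtwo'}, so it is convenient to keep lemmas~\ref{omega}, \ref{goodeg}, \ref{Phisub}, \ref{structyping} and \ref{newrules} at hand throughout.

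I would then go case by case on the last rule. In the axiom case $M=x$: by lemma~\ref{newgen}.\ref{newgenone}, $\G$ is empty and $U\sqsubseteq V$ (more precisely $\<\G,x:U\v V\>$ forces $\G=()$ and $U\sqsubseteq V$), so $M[x:=N]=N$ and from $N:\<\D\v U\>$ with $U\sqsubseteq V$ we get $N:\<\D\v V\>$ by rule $\sqsubseteq$ (via $\sqsubseteq_{\<\>}$); since $\G\sqcap\D=\D$ this is exactly what is wanted. The $\o$ rule is immediate using lemma~\ref{structyping}.\ref{structypingtwo} once we check $dom(\G\sqcap\D)=FV(M[x:=N])$, which follows from $FV(N)=dom(\D)$ (lemma~\ref{structyping}.\ref{structypingone}) and $FV(M)\setminus\{x\}=dom(\G)$. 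For the two abstraction rules $\f_i,\f_i'$ we rename the bound variable away from $x$ and from $FV(N)$ by BC, apply the IH to the premise $M':\<\G,x:U,y:W\v T\>$, and reapply the abstraction rule; a small care point is whether $y$ stays free after substitution, which is why both $\f_i$ and $\f_i'$ must be kept available. For the application rule $\f_e$ with $M=M_1M_2$, $M_1:\<\G_1\v W\f T\>$, $M_2:\<\G_2\v W\>$ and $\G=\G_1\sqcap\G_2$: here the subtlety is that $x:U$ sits in the combined environment, so one first has to split $U$ — write $U=U_1\sqcap U_2$ so that $M_1$ is actually typed in $\G_1',x:U_1$ and $M_2$ in $\G_2',x:U_2$ — then split $N:\<\D\v U\>$ correspondingly (using that $\sqsubseteq$ on $U$ lets us reuse the same $N$ with $U_1$ and with $U_2$ by lemma~\ref{Phisub}.\ref{Phisubthree} and rule $\sqsubseteq$), apply the IH twice, and recombine with $\f_e$, checking that $(\G_1\sqcap\D)\sqcap(\G_2\sqcap\D)=(\G_1\sqcap\G_2)\sqcap\D$ by idempotence of $\sqcap$ on environments. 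The $\sqcap_i$ case is a direct double application of the IH followed by $\sqcap_i$; the $\sqsubseteq$ case uses lemma~\ref{Phisub}.\ref{Phisubthree} to push the subtyping through, applying the IH to the premise and then re-applying $\sqsubseteq$, after possibly first weakening $\D$ via the $\sqcap_e$-style inclusions of lemma~\ref{Phisub}.

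**The main obstacle** I anticipate is the application case, specifically the bookkeeping needed because $x$ appears in a *combined* environment $\G_1\sqcap\G_2$: one cannot simply apply the IH to the two premises as they stand, since the premise typings mention only parts $U_1,U_2$ of $U$ with $U_1\sqcap U_2\sqsubseteq U$ rather than $U$ itself, and one has to manufacture derivations $N:\<\D\v U_1\>$ and $N:\<\D\v U_2\>$ from $N:\<\D\v U\>$ — which works because $U\sqsubseteq U_i$ fails in general, so actually the cleaner route is to first use lemma~\ref{newgen} / the shape of $\sqcap$ on environments to see that the premise environments have the form $\G_i,x:U_i$ with $\G_1\sqcap\G_2=\G$ and $U_1\sqcap U_2=U$, then observe $U\sqsubseteq U_i$ is false but we don't need it: we need $N$ typed at $U_i$, and since $U=U_1\sqcap U_2\sqsubseteq U_i$ by $\sqcap_e$, rule $\sqsubseteq$ gives $N:\<\D\v U_i\>$ directly. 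The other fiddly point is keeping $dom$s aligned so that $\sqcap$ of environments is always defined on matching domains; this is routine given lemma~\ref{structyping}.\ref{structypingone} but must be stated. A secondary annoyance is the interaction of BC with substitution in the abstraction cases, handled in the standard way by $\alpha$-renaming the bound variable before substituting.

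Given all the machinery already proved, none of the steps is deep; the proof is a careful but routine structural induction, and I would present it as "By induction on the derivation of $M:\<\G,x:U\v V\>$" with the application and $\sqsubseteq$ cases written out and the rest indicated briefly.
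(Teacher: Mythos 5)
Your proposal is correct and takes essentially the same route as the paper's proof: induction on the derivation of $M:\<\G,x:U\v V\>$, with the application case handled exactly as in the paper by noting the premise environments have the form $\G_i,x:U_i$ with $U_1\sqcap U_2=U$ and retyping $N$ at each $U_i$ via $\sqcap_e$ and rule $\sqsubseteq$, and the subsumption case handled via lemma~\ref{Phisub}.\ref{Phisubthree}. The only slips are cosmetic (in the $\sqsubseteq$ case what must be adjusted is the type of $N$ from $U$ to $U'$ rather than $\D$, and the subcases where $x$ is free in only one of $M_1,M_2$ are left implicit, as the paper also does).
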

\begin{proof}
By induction on the derivation $M:\<\G,x:U\v  V\>$.
\begin{itemize}
\item
If $\F{}{x : \<(x:T) \v  T\>}$ and $N :\<\D\v T\>$, then $N =
x[x:=N]:\<\D \v  T\>$.
\item
If $\F{}{M : \<(x_i:\o)_n, x:\o \v \o\>}$ where $FV(M)= \{x_1, \dots, x_n, x\}$  and if $N :\<\D\v \o\>$, then since $FV(M[x:=N]) = \{x_1,\dots, x_n\} \cup FV(N)$, we have by $\o$, $M[x:=N]:\<(x_i:\o)_n\sqcap env^N_\o\v \o\>$.
By lemmas~\ref{Phisub}.\ref{Phisubtwo'} and~\ref{structyping}, $\D \sqsubseteq   env^N_\o$ and by lemma~\ref{Phisub}.\ref{Phisublast}, 
$(x_i:\o)_n\sqcap \D \sqsubseteq (x_i:\o)_n\sqcap env^N_\o$.  Hence, by $\sqsubseteq_{\<\>}$,
$M[x:=N]:\<(x_i:\o)_n\sqcap \D \v \o\>$.
\item
Let $\F{M : \<\G, x:U, y:U' \v  T\>}{\l y. M : \<\G, x:U \v U' \f
T\>}$. By IH, $M[x:=N] : \<\G\sqcap \D, y:U' \v  T\>$. By rule
$\f_i$, $(\l y.M)[x:=N] = \l y.M[x:=N] : \<\G\sqcap \D \v U'\f
T\>$.
\item
Let $\F{M : \<\G, x:U \v  T\> \;\;\;y \not \in
dom(\G)\cup\{x\}}{\l y. M : \<\G, x:U \v \omega \f T\>}$. By IH,
$M[x:=N] : \<\G\sqcap \D \v T\>$. By rule $\f'_i$, $(\l y.M)[x:=N]
= \l y.M[x:=N] : \<\G\sqcap \D \v \omega \f T\>$.
\item
Let $\F{M_1 : \<\G_1, x:U_1 \v  V \f T\> \;\;\; M_2 : \<\G_2,
x:U_2 \v  V\>} {M_1 \; M_2 : \<\G_1 \sqcap \G_2, x:U_1\sqcap U_2
\v T\>}$ where $x \in FV(M_1)\cap FV(M_2)$ and $N :\<\D\v
U_1\sqcap U_2\>$. By rules $\sqcap_e$ and $\sqsubseteq$, $N
:\<\D\v U_1\>$ and $N :\<\D\v  U_2\>$. Now use IH and
rule $\f_e$.\\
The cases $x \in FV(M_1)\setminus FV(M_2)$ or $x \in
FV(M_2)\setminus FV(M_1)$ are easy.
\item
If $\F{M: \<\G, x:U \v  U_1\> \;\; M : \<\G, x:U \v  U_2\>} {M :
\<\G, x:U \v  U_1 \sqcap U_2\>}$ use IH and $\sqcap_i$.
\item
Let $\F{M : \<\G', x:U' \v V'\> \;\;\; \<\G', x:U' \v  V'\>
\sqsubseteq \<\G, x:U \v V\>}{M : \<\G, x:U\v V\>}$ (by
lemma~\ref{Phisub}).

By  lemma~\ref{Phisub}, $dom(\G) = dom(\G')$, $\G \sqsubseteq
\G'$, $U \sqsubseteq U'$ and $V' \sqsubseteq V$. Hence by $\sqsubseteq$,  $N :\<\D\v
U'\>$ and, by IH, $M[x:=N] : \<\G'\sqcap \D\v V'\>$. It is easy to
show $\G \sqcap \D \subseteq \G'\sqcap \D$. Hence, $\<\G'\sqcap
\D\v V'\>\sqsubseteq \<\G\sqcap \D\v  V\>$ and  by $\sqsubseteq$, $M[x:=N] : \<\G\sqcap
\D\v V\>$.
\end{itemize}
\end{proof}

Since our system does not allow weakening, we need the next definition (and the related lemma below it) since when a term is reduced, it may lose some of its free variables and hence will need to be typed in a smaller environment.
\begin{definition}
If $\G$ is a type environment and ${\cal U} \subseteq dom(\G)$,
then  we write $\G\r_{\cal U}$ for the restriction of $\G$ on the
variables of ${\cal U}$.  If ${\cal U} = FV(M)$ for a term $M$,
we write $\G \r_M$ instead of $\G \r_{FV(M)}$.
\end{definition}

\begin{lemma}
\label{restlem}
\begin{enumerate}
\item\label{restlemone}
If $FV(N) \subseteq FV(M)$, then $ env^M_\o \r_N = env^N_\o$.
\item\label{restlemtwo}
If $FV(M) \subseteq dom(\G_1)$ and  $FV(N) \subseteq dom(\G_2)$, then \\
$(\G_1 \sqcap\G_2 )\r_{MN} \sqsubseteq (\G_1 \r_M)\sqcap\G_2$.
\end{enumerate}
\end{lemma}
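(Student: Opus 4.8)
The plan is to prove the two parts separately, each by a routine structural argument.

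For part~\ref{restlemone}, I would argue directly from the definitions. Write $FV(M) = \{x_1,\dots,x_n\}$, so that $env^M_\o = (x_i:\o)_n$. Since $FV(N) \subseteq FV(M)$, the restriction $env^M_\o\r_N$ is, by definition, the environment obtained by keeping exactly those pairs $x_i:\o$ with $x_i \in FV(N)$; that is, $env^M_\o\r_N = (x_i:\o \mid x_i \in FV(N))$, which is precisely $(y:\o)_{y \in FV(N)} = env^N_\o$. No induction is needed here — it is a direct unfolding of the definitions of $env^{\cdot}_\o$ and of $\G\r_{\cal U}$.

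For part~\ref{restlemtwo}, let $\G_1 = (x_i:U_i)_n$ on $dom(\G_1)$ and $\G_2 = (y_j:V_j)_m$ on $dom(\G_2)$, with $FV(M)\subseteq dom(\G_1)$ and $FV(N)\subseteq dom(\G_2)$. The key point is that $FV(MN) = FV(M)\cup FV(N)$, so on the left-hand side $(\G_1\sqcap\G_2)\r_{MN}$ we see exactly the variables in $FV(M)\cup FV(N)$, each assigned: $U_i\sqcap V_j$ if it lies in both $dom(\G_1)$ and $dom(\G_2)$, $U_i$ if only in $dom(\G_1)$, and $V_j$ if only in $dom(\G_2)$. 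On the right-hand side, $(\G_1\r_M)\sqcap\G_2$ has domain $FV(M)\cup dom(\G_2) \supseteq FV(M)\cup FV(N) = dom((\G_1\sqcap\G_2)\r_{MN})$; wait — in fact one should check the domains agree after the implicit further restriction, or else read $\sqsubseteq$ here as already forcing equal domains. Indeed by Lemma~\ref{Phisub}.\ref{Phisubone'}, $\sqsubseteq$ on environments requires equal domains, so I must verify $dom((\G_1\sqcap\G_2)\r_{MN}) = dom((\G_1\r_M)\sqcap\G_2)$: both equal $FV(M)\cup FV(N)$, using $FV(N)\subseteq dom(\G_2)$ for one inclusion and $dom(\G_2)\subseteq dom(\G_2)$ together with the fact that only variables of $FV(MN)$ survive — so one reads the right side as also restricted, or the statement's $\G_2$ should be understood as $\G_2\r_N$. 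Granting the domains match, it then suffices by Lemma~\ref{Phisub}.\ref{Phisubtwo} to check pointwise that each entry of the left side is $\sqsubseteq$ the corresponding entry of the right: for a variable in $FV(M)\cap dom(\G_2)$ the left gives $U_i\sqcap V_j$ and the right gives $U_i\sqcap V_j$ (use $ref$); for a variable in $FV(M)\setminus dom(\G_2)$ both give $U_i$; for a variable in $FV(N)\setminus FV(M)$ the left gives $V_j$ (or $U_i\sqcap V_j$ if also in $dom(\G_1)$) and the right gives $V_j$ — and here $U_i\sqcap V_j \sqsubseteq V_j$ by rule $\sqcap_e$, which is exactly why the relation is $\sqsubseteq$ and not $=$.

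The main obstacle is bookkeeping rather than mathematical depth: one must be careful about the exact domains of the restricted environments and about which variables fall into $dom(\G_1)$ only, $dom(\G_2)$ only, or both, since the asymmetry between the two sides (the left restricts the full meet to $FV(MN)$, the right restricts only $\G_1$ and leaves $\G_2$ alone) is what makes the inclusion strict in general and forces the use of $\sqcap_e$. Once the case split on the location of each variable is set up cleanly, both parts close immediately via Lemma~\ref{Phisub}.\ref{Phisubtwo}.
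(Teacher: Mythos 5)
Your proof is correct and takes essentially the same route as the paper's: part~\ref{restlemone} is a direct unfolding of the definitions, and part~\ref{restlemtwo} computes the domains and then argues pointwise by cases on where each variable lies, closing with lemma~\ref{Phisub}.\ref{Phisubtwo} and rule $\sqcap_e$. The domain wrinkle you flag (the right-hand side could a priori have the larger domain $FV(M)\cup dom(\G_2)$) is glossed over by the paper's own proof as well, which tacitly uses $dom(\G_2)=FV(N)$ --- harmless because in the lemma's only application this holds by lemma~\ref{structyping}.\ref{structypingone} --- so your ``granting the domains match'' leaves you in exactly the same position as the paper.
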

\begin{proof}
\ref{restlemone}.\ Easy.  \ref{restlemtwo}.\ First, note that
$dom((\G_1 \sqcap\G_2 )\r_{MN}) = FV(MN) = FV(M) \cup FV(N) = dom(\G_1
\r_M) \cup dom(\G_2) = dom( (\G_1 \r_M)\sqcap\G_2)$. Now, we show by cases that
if $x:U_1 \in (\G_1 \sqcap\G_2 )\r_{MN}$ and $x:U_2 \in (\G_1 \r_M)\sqcap\G_2$
then  $U_1 \sqsubseteq U_2$:
\begin{itemize}
\item If $x \in FV(M)\cap FV(N)$ then  $x:U'_1 \in \G_1$,  $x:U''_1 \in \G_2$ and $U_1 = U'_1\sqcap U''_1 = U_2$.
\item If $x \in FV(M)\setminus FV(N)$ then $x \not \in dom(\G_2)$,  $x:U_1 \in \G_1$ and $U_1 = U_2$.
\item If $x \in FV(N)\setminus FV(M)$ then 
\begin{itemize}
\item If $x \in dom(\G_1)$ then  $x:U'_1 \in \G_1$,  $x:U_2 \in \G_2$ and  $U_1 = U'_1 \sqcap U_2\sqsubseteq U_2$.

\item If $x \not \in dom(\G_1)$ then $x:U_2 \in \G_2$ and $U_1 = U_2$.
\end{itemize}
\end{itemize}
\end{proof}

Now we give  the basic block in the subject reduction for $\beta$.
\begin{theorem}\label{betaetatheo}
If $M : \<\G \v  U\>$ and $M \rhd_{\beta} N$, then $N : \<\G\r_N
\v  U\>$.
\end{theorem}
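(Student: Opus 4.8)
The plan is to argue by induction on the derivation of $M \rhd_\beta N$, i.e.\ on the structure of the compatibility rules defining $\rhd_\beta$. The base case is a genuine $\beta$-contraction $M = (\l x. P)Q \rhd_\beta P[x:=Q] = N$; the inductive cases handle the congruence rules $P \rhd_\beta P' \Rightarrow PQ \rhd_\beta P'Q$, $Q \rhd_\beta Q' \Rightarrow PQ \rhd_\beta PQ'$, and $P \rhd_\beta P' \Rightarrow \l x.P \rhd_\beta \l x.P'$. Throughout I will use lemma~\ref{structyping}.\ref{structypingone} to keep track of the fact that $dom(\G) = FV(M)$ shrinks exactly to $FV(N)$ when free variables are lost, and lemma~\ref{restlem} to reconcile the environments, together with the generation lemma~\ref{newgen}, the substitution lemma~\ref{substlem}, and lemma~\ref{Phisub}/\ref{goodeg} for the $\sqsubseteq$-bookkeeping.

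For the base case, suppose $(\l x. P)Q : \<\G \v U\>$. If $U = \o$ there is nothing to prove beyond invoking lemma~\ref{structyping}.\ref{structypingtwo} with the environment $env^N_\o = \G\r_N$ (using theorem~\ref{conf}.\ref{confone} to see $FV(N)\subseteq FV(M)$ and lemma~\ref{restlem}.\ref{restlemone}). Otherwise, by lemma~\ref{omega}.\ref{omegaone} write $U = \sqcap_{i=1}^n T_i$ and it suffices to derive $N : \<\G\r_N \v T_i\>$ for each $i$ (then reassemble with $\sqcap_i$, after lowering to a common environment via $\sqsubseteq$). Analysing the typing of the application $(\l x. P)Q$ with $\f_e$ (peeling off $\sqsubseteq$-steps using lemma~\ref{Phisub}.\ref{Phisubthree} and lemma~\ref{goodeg}) gives $\G_1 \sqcap \G_2 \sqsubseteq \G$, $\l x. P : \<\G_1 \v V \f T_i\>$ and $Q : \<\G_2 \v V\>$. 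Then the generation lemma~\ref{newgen}.\ref{newgentwo}/\ref{newgenthree} (splitting on whether $x\in FV(P)$) yields $P : \<\G_1, x:V' \v T_i\>$ (or $P:\<\G_1\v T_i\>$ when $x\notin FV(P)$) with $V \sqsubseteq V'$ suitably, whence $Q : \<\G_2 \v V'\>$ by $\sqcap_e$/$\sqsubseteq$; the substitution lemma~\ref{substlem} then gives $P[x:=Q] : \<\G_1 \sqcap \G_2 \v T_i\>$. Finally lemma~\ref{restlem}.\ref{restlemtwo} (with the shapes of $\G_1,\G_2$ controlled by lemma~\ref{structyping}.\ref{structypingone}) shows $\G\r_N \sqsubseteq \G_1 \sqcap \G_2$ up to the $\sqsubseteq$ needed, and $\sqsubseteq$ with $\sqsubseteq_{\<\>}$ delivers $N : \<\G\r_N \v T_i\>$. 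The case $x\notin FV(P)$ is where $Q$'s variables may disappear, and this is exactly what the restriction $\G\r_N$ and lemma~\ref{restlem} are designed to absorb.

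The congruence cases are routine: for $\l x.P \rhd_\beta \l x.P'$ one applies generation~\ref{newgen}.\ref{newgentwo}/\ref{newgenthree} to strip the abstraction, uses the IH on $P \rhd_\beta P'$ componentwise over the $\sqcap$-decomposition of $U$, and rebuilds with $\f_i$/$\f'_i$ and $\sqcap_i$, again adjusting the environment with $\sqsubseteq$ since $FV(P')$ may be a proper subset of $FV(P)$. For $PQ \rhd_\beta P'Q$ (and symmetrically $PQ\rhd_\beta PQ'$) one inverts $\f_e$, applies the IH to the reduced subterm, and recombines; here lemma~\ref{restlem}.\ref{restlemtwo} is precisely the tool that relates $(\G_1\sqcap\G_2)\r_{P'Q}$ to $(\G_1\r_{P'})\sqcap\G_2$. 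The main obstacle, and the place where most care is needed, is the environment bookkeeping throughout the base case: because the system has no weakening and is relevant, one must continually pass between $\G$, $\G_1\sqcap\G_2$, and the restricted environment $\G\r_N$, justifying each transition by the $\sqsubseteq$-lemmas rather than by a silent appeal to weakening. I expect the inductive (congruence) steps to be essentially mechanical once this $\sqsubseteq$-infrastructure is in place.
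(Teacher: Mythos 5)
Your overall strategy --- induction on the derivation of $M \rhd_\beta N$, with inversion of the typing at each syntactic construct --- is a legitimate alternative to what the paper does (the paper inducts on the typing derivation of $M$, so the redex analysis is confined to the $\f_e$ case and no inversion of application typings is ever needed). But as written your plan has a real hole exactly at the point where the two routes diverge: both your base case and your congruence case $PQ \rhd_\beta P'Q$ require a generation lemma for applications, of the form: if $M_1\,M_2 : \<\G \v U\>$ then $U = \o$ or $U = \sqcap_{i=1}^k T_i$ and for each $i$ there exist $V$, $\G_1$, $\G_2$ with $M_1 : \<\G_1 \v V \f T_i\>$, $M_2 : \<\G_2 \v V\>$ and $\G \sqsubseteq \G_1 \sqcap \G_2$. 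No such lemma appears in the paper, and it does not follow by ``peeling off $\sqsubseteq$-steps'' with lemma~\ref{Phisub}.\ref{Phisubthree} and lemma~\ref{goodeg}: since types are quotiented (idempotence and $\o \sqcap U = U$), a derivation whose subject is an application and whose type is a single strict $T$ may perfectly well end in $\sqcap_i$, and $\sqsubseteq$- and $\sqcap_i$-steps can be interleaved arbitrarily above the $\f_e$ inference. So this inversion needs its own induction on the typing derivation, with the same kind of case analysis as the paper's lemma~\ref{newgen} (compare its clause~\ref{newgenfour}, which is exactly such an inversion but only for the special application $M\,x$). The lemma is true and provable with the available tools, so your plan is completable, but the work you are implicitly assuming is roughly the work the paper's choice of induction is designed to avoid.

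Two smaller points in the same place. First, the direction of subsumption you extract is backwards: inverting through rule $\sqsubseteq$ gives $\G \sqsubseteq \G_1 \sqcap \G_2$ (the displayed environment is the stronger one), not $\G_1 \sqcap \G_2 \sqsubseteq \G$; your later step does use the correct direction, so fix the statement. Second, in the erasing case $x \notin FV(P)$ one has $N = P$ and $FV(N) = dom(\G_1)$ may be a proper subset of $dom(\G_1 \sqcap \G_2)$, so by lemma~\ref{Phisub}.\ref{Phisubone'} the comparison $\G\r_N \sqsubseteq \G_1 \sqcap \G_2$ cannot hold and lemma~\ref{restlem}.\ref{restlemtwo} is not the right tool; what you need is $\G\r_N \sqsubseteq \G_1$, obtained componentwise from $\G \sqsubseteq \G_1 \sqcap \G_2$ via lemma~\ref{Phisub}.\ref{Phisubtwo} and $\sqcap_e$. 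With the application-inversion lemma stated and proved, and these environment corrections made, your induction on the reduction step does go through; the abstraction congruence case you sketch is fine (noting that when $x$ is erased you must also weaken the arrow type via $\o \f T_i \sqsubseteq V_i \f T_i$, which your ``adjusting with $\sqsubseteq$'' presumably covers).
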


\begin{proof}
By induction on the derivation $M : \<\G \v  U\>$. Rule $\o$
follows  by theorem~\ref{conf}.\ref{confone} and
lemma~\ref{restlem}.\ref{restlemone}. Rules
$\f_i$, $\f'_i$, $\sqcap_i$ and $\sqsubseteq$ are by IH. We do $\f_e$\\
Let $\F{M_1 : \<\G_1 \v  U \f T\> \;\;\; Q : \<\G_2 \v  U\>}
{M_1 \; Q : \<\G_1 \sqcap \G_2 \v T\>}$.
\begin{itemize}
\item
If $M = M_1Q \rhd_\beta  PQ= N$ where $M_1 \rhd_\beta P$ then by IH, 
$P  : \<\G_1 \r_{P} \v  U \f T\>$.  By  $\f_e$, $P \; Q : \<(\G_1\r_{P}) \sqcap \G_2 \v T\>$.
By  lemma~\ref{restlem}.\ref{restlemtwo}, $(\G_1 \sqcap\G_2 )\r_{PQ} \sqsubseteq (\G_1 \r_{P})\sqcap\G_2$.
Finally, by $\sqsubseteq_{\<\>}$,  $P \; Q : \< (\G_1 \sqcap\G_2 )\r_{PQ} \v T\>$.
\item
The case  $M = M_1Q \rhd_\beta  M_1P=N$ where $Q \rhd_\beta P$ is similar to the above.
\item
 Assume $M_1 = \l x.P$ and $M_1\;M_2 = (\l
x.P)M_2 \rhd_\beta P[x := M_2] = N$. Since $\l x.P: \<\G_1 \v U \f
T\>$, we have two cases:
\begin{itemize}
\item If $x \in FV(P)$, then, by lemma~\ref{newgen}.\ref{newgentwo},
$P: \<\G_1, x:U \v T\>$. By lemma~\ref{substlem}, $P[x := M_2]:
\<\G_1\sqcap \G_2 \v T\>$.  Moreover,  $FV(M_1M_2) = FV(N) = dom(\G_1 \sqcap \G_2)$.
 Hence $(\G_1\sqcap \G_2)\r_N = \G_1\sqcap \G_2$ and
$N:\<(\G_1\sqcap \G_2)\r_N \v T\>$.

\item If $x \not \in FV(P)$, then, by lemma~\ref{newgen}.\ref{newgenthree},
$P: \<\G_1 \v T\>$.  Moreover, by lemma~\ref{structyping}.\ref{structypingone},
$FV(P) = FV(M_1) = dom(\G_1)$. Hence, $(\G_1 \sqcap \G_2)\r_P = \G_1\r_P = \G_1$
 and $P[x := M_2] = P : \<(\G_1 \sqcap \G_2)\r_P
\v T\>$.
\end{itemize}
\end{itemize}
\end{proof}

\begin{corollary}[Subject reduction for $\beta$]\label{finalbetaeta}
\mbox \hfill{}\\
If $M : \<\G \v  U\>$ and $M \rhd^*_{\beta} N$, then $N : \<\G\r_N
\v  U\>$.
\end{corollary}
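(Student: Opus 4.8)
The plan is to proceed by induction on the length $n$ of the reduction sequence $M \rhd^*_{\beta} N$, letting theorem~\ref{betaetatheo} do the real work at each single step while lemma~\ref{structyping}.\ref{structypingone} and theorem~\ref{conf}.\ref{confone} keep the environment bookkeeping under control.

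For the base case $n = 0$ we have $N = M$, so by lemma~\ref{structyping}.\ref{structypingone}, $dom(\G) = FV(M) = FV(N)$, whence $\G\r_N = \G$ and $N : \<\G\r_N \v U\>$ holds trivially.

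For the inductive step, split the reduction as $M \rhd_{\beta} M' \rhd^*_{\beta} N$, where the second part has length $n-1$. By theorem~\ref{betaetatheo}, $M' : \<\G\r_{M'} \v U\>$. Applying the induction hypothesis to the shorter reduction $M' \rhd^*_{\beta} N$, starting from the typing $M' : \<\G\r_{M'} \v U\>$, yields $N : \<(\G\r_{M'})\r_N \v U\>$. It then remains to check that $(\G\r_{M'})\r_N = \G\r_N$: by theorem~\ref{conf}.\ref{confone} we have $FV(N) \subseteq FV(M')$, and, since restriction only ever removes variables from the domain, restricting $\G$ first to $FV(M')$ and then to $FV(N)$ coincides with restricting $\G$ directly to $FV(N)$. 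Hence $N : \<\G\r_N \v U\>$, which closes the induction.

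I expect no genuine obstacle here; the only point needing a moment's care is this composition of the two restriction operations, and it is settled precisely by the free-variable monotonicity of $\rhd^*_{\beta}$ (theorem~\ref{conf}.\ref{confone}). All the substantive content — in particular the contraction of a redex and the appeal to the substitution lemma~\ref{substlem} — has already been absorbed into theorem~\ref{betaetatheo}, so the corollary is a routine iteration of it.
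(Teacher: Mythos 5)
Your proof is correct and follows exactly the paper's route: induction on the length of $M \rhd^*_{\beta} N$ with theorem~\ref{betaetatheo} handling each single step, the paper merely leaving implicit the bookkeeping $(\G\r_{M'})\r_N = \G\r_N$ that you spell out via theorem~\ref{conf}.\ref{confone} and lemma~\ref{structyping}.\ref{structypingone}.
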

\begin{proof}
By induction on the length of the derivation $M \rhd^*_{\beta} N$
using theorem~\ref{betaetatheo}.
\end{proof}

\begin{remark}
Note that using lemma~\ref{newgen}.(\ref{newgenfour} and~\ref{newgentwo}), we can also prove the
subject reduction property for  $\eta$-reduction.
\end{remark}
\subsection{Subject expansion for $\beta$}
Subject reduction for $\beta$ was shown using generation, substitution and environment restriction.  Subject expansion for $\beta$ needs something like the converse of the substitution lemma and environment enlargement.  

The next lemma can be seen as the converse of the substitution lemma.
\begin{lemma}\label{exp1}
If $M[x:= N] : \<\G \v U\>$, $x \in FV(M)$ and $x \not \in FV(N)$,
then $\exists~V$ type and $\exists~\G_1,\G_2$ type environments
such that:
\begin{itemize}
\item $M : \<\G_1, x : V \v U\>$
\item $N : \<\G_2 \v V\>$
\item $\G \sqsubseteq \G_1 \sqcap \G_2$
\end{itemize}
\end{lemma}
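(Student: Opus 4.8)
The plan is to prove this by induction on the derivation of $M[x:=N] : \<\G \v U\>$, following the same structure as the proof of the substitution lemma (Lemma~\ref{substlem}) but read ``backwards''. The subtlety is that $M[x:=N]$ may arise from several syntactically different terms $M$, so the induction must be driven by the structure of $M$ as much as by the last rule applied. I would organize the case analysis around the shape of $M$, using the hypothesis $x \in FV(M)$ throughout.

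First I would treat the base case $M = x$: then $M[x:=N] = N : \<\G \v U\>$, so take $V = U$, $\G_1 = ()$ (note $FV(x) = \{x\}$ so by Lemma~\ref{structyping}.\ref{structypingone} the ambient environment of $M$ is just $(x:V)$), $\G_2 = \G$, and $\G \sqsubseteq () \sqcap \G = \G$ holds by reflexivity. If $M = y \neq x$ then $x \notin FV(M)$, contradicting the hypothesis, so this case is vacuous. For $M = \l y.P$ with $y \neq x$ (safe by BC), we have $M[x:=N] = \l y.P[x:=N]$, and since $x \in FV(P)$, I would apply the generation lemma (Lemma~\ref{newgen}.\ref{newgentwo} when $y \in FV(P[x:=N])$, and \ref{newgenthree} otherwise — noting that $x \in FV(P)$ and $x \neq y$ means $y \in FV(P[x:=N])$ iff $y \in FV(P)$) to decompose $U$ as $\o$ or $\sqcap_{i=1}^k(V_i \f T_i)$ with $P[x:=N]$ typable under each $\<\G, y:V_i \v T_i\>$; then apply the IH to each component, and recombine the resulting environments using $\sqcap_i'$ (Lemma~\ref{newrules}.\ref{newrulesone}) and the $\sqcap$ rules on $\sqsubseteq$ (Lemma~\ref{Phisub}.\ref{Phisublast}). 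The $\o$ subcase is handled directly via rule $\o$ and Lemma~\ref{Phisub}.\ref{Phisubtwo'}. The application case $M = P_1 P_2$ splits on whether $x$ occurs in $P_1$, in $P_2$, or both; using generation on the head and the $\f_e$ structure, apply the IH to whichever subterm(s) contain $x$ and use $\o$-typing plus environment restriction/enlargement for the other, then glue environments together with Lemma~\ref{Phisub}.\ref{Phisublast} and the associativity/commutativity of $\sqcap$.

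The main obstacle I anticipate is the bookkeeping in the application case when $x$ occurs in only one of $P_1, P_2$: there the ``missing'' subterm must be given a type (namely $\o$, or whatever type it had) in an environment that may not match up on the nose with the restriction coming from $M[x:=N]$, so one has to insert a $\sqsubseteq$ step and carefully verify the domain conditions ($dom$ equalities from Lemma~\ref{structyping} and Lemma~\ref{Phisub}.\ref{Phisubone'}). A second delicate point is that the statement quantifies $V$, $\G_1$, $\G_2$ existentially but the conclusion $\G \sqsubseteq \G_1 \sqcap \G_2$ is an inequality rather than an equality — this slack is exactly what absorbs the idempotence of $\sqcap$ when $x$ (or other variables) occur in both subterms, and what lets the $\sqsubseteq$-rule case (where $M[x:=N]$ was obtained by subsumption from a smaller typing) go through by transitivity of $\sqsubseteq$ combined with $\sqsubseteq_c$ and Lemma~\ref{Phisub}.\ref{Phisublast}. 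I would also flag that $FV(N) \subseteq dom(\G)$ need not hold a priori — only $dom(\G) = FV(M[x:=N]) = (FV(M)\setminus\{x\}) \cup FV(N)$ — which is consistent since $x \in FV(M)$, and this is why $\G_2$ (the environment for $N$) comes out as a sub-part of $\G$ rather than all of it.
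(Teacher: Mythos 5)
There is a genuine gap in your plan, and it sits exactly where you wave your hands: the application case. The paper's generation lemma (lemma~\ref{newgen}) only inverts typings of variables, of terms $M\,x$ with $x \not\in FV(M)$, and of abstractions; there is \emph{no} inversion principle for a general application $P\,Q : \<\G \v U\>$ in the paper. Your phrase ``using generation on the head and the $\f_e$ structure'' does not name an existing lemma, and the statement you would need (if $P\,Q : \<\G \v U\>$ and $U \neq \o$, then $U = \sqcap_{i=1}^k T_i$ and for each $i$ there are $W_i, \G_1^i, \G_2^i$ with $P : \<\G_1^i \v W_i \f T_i\>$, $Q : \<\G_2^i \v W_i\>$ and $\G \sqsubseteq \G_1^i \sqcap \G_2^i$) requires its own induction on the typing derivation, dealing with the $\sqcap_i$ and $\sqsubseteq$ rules --- i.e.\ essentially the same work as the lemma you are trying to prove. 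So the structural-induction-on-$M$ organization does not go through with the tools the paper provides.

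There is also a tension in your induction measure. You announce induction on the derivation of $M[x:=N] : \<\G \v U\>$, but in the abstraction case you invert with lemma~\ref{newgen}.\ref{newgentwo}/\ref{newgenthree} and then apply the induction hypothesis to the judgements $P[x:=N] : \<\G, y:V_i \v T_i\>$ produced by generation. Those judgements are merely derivable; they are not subderivations of the given derivation (and the paper's generation lemma is not stated in a height-preserving form), so under derivation induction this IH application is not licensed. If instead you read your argument as structural induction on $M$, the abstraction case is fine but you hit the missing application-inversion lemma above, and your separate ``$\sqsubseteq$-rule case'' no longer makes sense as a case of the induction. The paper avoids all of this by inducting on the derivation and organizing by the last rule, exactly mirroring the substitution lemma: in the $\f_i$/$\f'_i$ and $\f_e$ cases the fact that $M \neq x$ forces $M$ to be the corresponding abstraction or application, so the IH applies directly to the premises, and the $\sqcap_i$ and $\sqsubseteq$ rules are handled as their own uniform cases with lemma~\ref{Phisub} and rule $\sqcap'_i$ --- no inversion beyond lemma~\ref{Phisub}.\ref{Phisubthree} is needed. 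Your base case $M = x$, your use of $\sqcap'_i$ and lemma~\ref{Phisub}.\ref{Phisublast} to glue environments, and your observation that the slack in $\G \sqsubseteq \G_1 \sqcap \G_2$ absorbs idempotence are all consistent with the paper's proof; the missing piece is a correct treatment of applications (and, strictly, of the $\o$ and $\sqcap_i$/$\sqsubseteq$ rules as derivation cases rather than via generation).
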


\begin{proof}
By induction on the derivation $M[x:= N] : \<\G \v U\>$.

If $M = x$, then $x : \<x : U \v U\>$, $N : \<\G \v U\>$ and $\G =
\G \sqcap ()$. Then we can assume that $M \neq x$.

\begin{itemize}
\item The last typing rule can not be $ax$.

\item
Let $\F{M[x:=N] : \<\G, y: W \v  T\>}{\l y. M[x:=N] : \<\G \v W \f
T\>}$ where $y \not \in FV(N)$.

By IH, $\exists~V$ type and $\exists~\G_1,\G_2$ type environments
such that $M : \<\G_1, x : V \v T\>$, $N : \<\G_2 \v V\>$ and $\G,
y : W \sqsubseteq \G_1 \sqcap \G_2$. Since $y \in FV(M)$ and $y
\not \in FV(N)$,  by lemma~~\ref{Phisub}.\ref{Phisubtwo},
$\G_1 = \D_1, y : W'$ and $W \sqsubseteq W'$. Hence $M : \<\D_1,
y : W', x : V \v T\>$. By rule $\f_i$, $\l y. M :
\<\D_1 , x : V \v W' \f T\>$ and since $W' \f T \sqsubseteq W \f T$, then by rule $\sqsubseteq$, $\l y. M
: \<\D_1 , x : V \v W \f T\>$. Finally
by lemma~\ref{Phisub}.\ref{Phisubtwo}, $\G \sqsubseteq \D_1 \sqcap \G_2$.

\item
Let $\F{M[x:=N] : \<\G \v  T\> \;\;\;y \not \in dom(\G)}{\l y.
M[x:=N]  : \<\G\v \omega \f T\>}$.

By IH, $\exists~V$ type and $\exists~\G_1,\G_2$ type environments
such that $M : \<\G_1, x : V \v T\>$, $N : \<\G_2 \v V\>$ and $\G
\sqsubseteq \G_1 \sqcap \G_2$. Since $y \neq x$, $\l y.M : \<\G_1,
x : V \v \o \f T\>$.

\item
Let $\F{M_1[x:=N] : \<\G_1 \v  W \f T\> \;\;\; M_2[x:=N] : \<\G_2
\v W\>} {M_1[x:=N] \; M_2[x:=N] : \<\G_1 \sqcap \G_2 \v T\>}$

where  $M = M_1 M_2$ and $x \in FV(M_1)\cap FV(M_2)$.

By IH, $\exists~V_1,V_2$ types and $\exists~\D_1,\D_2;\B_1,\B_2$
type environments such that $M_1 : \<\D_1, x : V_1 \v W \f T\>$,
$M_2 : \<\B_1, x : V_2 \v W\>$, $N : \<\D_2 \v V_1\>$, $N : \<\B_2
\v V_2\>$, $\G_1 \sqsubseteq \D_1 \sqcap \D_2$ and $\G_2
\sqsubseteq \B_1 \sqcap \B_2$. Then, by rules $\sqcap'$ and $\f_e$, $M_1 M_2:
\<\D_1 \sqcap \B_1, x : V_1  \sqcap V_2 \v T\>$ and $N :
\<\Delta_2 \sqcap \B_2 \v V_1 \sqcap V_2\>$. Finally, by lemma~\ref{Phisub}.\ref{Phisublast}, $\G_1 \sqcap
\G_2 \sqsubseteq (\D_1 \sqcap \D_2) \sqcap (\B_1 \sqcap \B_2)$.

The cases $x \in FV(M_1)\setminus FV(M_2)$ or $x \in
FV(M_2)\setminus FV(M_1)$ are easy.

\item
Let $\F{M[x:=N] : \<\G \v  U_1\> \;\; M[x:=N]  : \<\G \v U_2\>}
{M[x:=N] : \<\G  \v  U_1 \sqcap U_2\>}$.

By IH, $\exists~V_1,V_2$ types and $\exists~\G_1,\G_2;\D_1,\D_2$
type environments such that $M : \<\G_1, x : V_1 \v U_1\>$, $M :
\<\D_1, x : V_2 \v U_2\>$, $N : \<\G_2 \v V_1\>$, $N : \<\D_2 \v
V_2\>$, $\G \sqsubseteq \G_1 \sqcap \G_2$ and $\G \sqsubseteq \D_1
\sqcap \D_2$. Then, by rule $\sqcap'$, $M : \<\G_1 \sqcap \D_1, x
: V_1 \sqcap V_2 \v U_1 \sqcap U_2\>$ and $N : \<\G_2 \sqcap \D_2
\v V_1 \sqcap V_2\>$. Finally, by lemma~\ref{Phisub}.\ref{Phisublast},
$\G \sqsubseteq (\G_1 \sqcap
\G_2) \sqcap (\D_1 \sqcap \D_2)$.

\item
Let $\F{M[x:=N] : \<\G' \v U'\> \;\;\; \<\G'\v  U'\> \sqsubseteq
\<\G \v U\>}{M : \<\G \v U\>}$.

By lemma~\ref{Phisub}.\ref{Phisubtwo}, $\G \sqsubseteq \G'$ and
$U' \sqsubseteq U$. By IH, $\exists~V$ type and
$\exists~\G'_1,\G'_2$ type environments such that $M : \<\G_1, x :
V \v U'\>$, $N : \<\G_2 \v V\>$ and $\G' \sqsubseteq \G_1 \sqcap
\G_2$. Then by rules $\sqsubseteq_{\<\>}$, $\sqsubseteq$ and {\it tr},
$M : \<\G_1, x : V \v U\>$ and $\G \sqsubseteq \G_1
\sqcap \G_2$.

\end{itemize}
\end{proof}

Since more free variables might appear in the $\beta$-expansion of a term, the next definition gives a possible enlargement of an environment.

\begin{definition}
Let $m \geq n$, $\G = (x_i : U_i)_n$ and ${\cal U} =
\{x_1,...,x_m\}$. We write $\G {\u^{\cal U}}$ for 
$x_1 : U_1,...,x_n : U_n,x_{n+1} : \o,...,x_m : \o$.
If $dom(\G) \subseteq  FV(M)$,
we write $\G {\u^M}$
instead of $\G {\u^{FV(M)}}$.
\end{definition}
The next lemma is basic for the proof of subject expansion for $\beta$.
\begin{lemma}\label{exp2}
If $M[x:=N] : \<\G \v U\>$, $x \not \in FV(N)$ and ${\cal U} =
FV((\l x.M)N)$, then $(\l x.M)N :\<\G {\u^{\cal U}} \v U\>$.
\end{lemma}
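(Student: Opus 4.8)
The plan is to split on whether $x$ occurs free in $M$. If $x \notin FV(M)$, then $M[x:=N] = M$, so $M : \<\G \v U\>$, and since the redex $(\l x.M)N$ may introduce extra free variables (namely $FV(N)$), we build the typing by the rule $\f'_i$ applied to a suitably enlarged environment, then use $\f_e$ to apply the result to $N$. Concretely, from $M : \<\G \v U\>$ and lemma~\ref{structyping}.\ref{structypingone} we have $dom(\G) = FV(M)$; apply rule $\f'_i$ (legitimate since $x \notin FV(M) = dom(\G)$) to get $\l x. M : \<\G \v \o \f U\>$. Using the $\o$ rule, $N : \<env^N_\o \v \o\>$, so by $\f_e$, $(\l x.M)N : \<\G \sqcap env^N_\o \v U\>$. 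It then remains to check that $\G \sqcap env^N_\o = \G\u^{\cal U}$ (where ${\cal U} = FV((\l x.M)N) = FV(M) \cup FV(N)$): on $dom(\G) = FV(M)$ the meet with $env^N_\o$ changes nothing (either the variable is not in $FV(N)$, or $\o$ is neutral), and on $FV(N)\setminus FV(M)$ we get exactly the $\o$-bindings, which is precisely the definition of $\G\u^{\cal U}$.

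If $x \in FV(M)$, I would invoke lemma~\ref{exp1}: there exist a type $V$ and environments $\G_1, \G_2$ with $M : \<\G_1, x:V \v U\>$, $N : \<\G_2 \v V\>$, and $\G \sqsubseteq \G_1 \sqcap \G_2$. From $M : \<\G_1, x:V \v U\>$, rule $\f_i$ gives $\l x.M : \<\G_1 \v V \f T\>$ when $U \in {\mathbb T}$; in general $U$ is an intersection $\sqcap_i T_i$, so one applies $\f_i$ componentwise and recombines with $\sqcap'_i$ (lemma~\ref{newrules}.\ref{newrulesone}) to get $\l x. M : \<\G_1 \v V \f U\>$ — I should be a little careful here and may instead route through lemma~\ref{newgen}.\ref{newgentwo} in reverse, i.e. just observe that the needed typing of $\l x.M$ is exactly what $\f_i$ (possibly iterated) produces. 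Then $\f_e$ with $N : \<\G_2 \v V\>$ yields $(\l x.M)N : \<\G_1 \sqcap \G_2 \v U\>$.

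The remaining work is to move from the environment $\G_1 \sqcap \G_2$ to $\G\u^{\cal U}$. Here $\G \sqsubseteq \G_1 \sqcap \G_2$ only gives an inequality in the wrong direction for a direct application of $\sqsubseteq$; the point is that $dom(\G_1 \sqcap \G_2) \supseteq FV(M)\cup FV(N) = {\cal U} = dom(\G\u^{\cal U})$ with equality of domains, and by lemma~\ref{structyping}.\ref{structypingone} applied to $(\l x.M)N$ we actually have $dom(\G_1\sqcap\G_2) = FV((\l x.M)N) = {\cal U}$, so the domains match. Combining $\G \sqsubseteq \G_1 \sqcap \G_2$ with $dom(\G)\subseteq{\cal U}$ and the definition of $\u^{\cal U}$ (which pads $\G$ with $\o$ on ${\cal U}\setminus dom(\G)$, and $\o$ is the top element by lemma~\ref{omega}.\ref{omegatwo}), lemma~\ref{Phisub}.\ref{Phisubtwo} gives $\G\u^{\cal U} \sqsubseteq \G_1 \sqcap \G_2$; hence by $\sqsubseteq_{\<\>}$ and $\sqsubseteq$ we conclude $(\l x.M)N : \<\G\u^{\cal U} \v U\>$. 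The main obstacle is bookkeeping: ensuring the domain of $\G_1\sqcap\G_2$ is exactly ${\cal U}$ (so that $\u^{\cal U}$ is comparable to it) and handling the case where $U$ is a non-trivial intersection when applying $\f_i$; both are routine given the structural lemmas but need to be stated carefully.
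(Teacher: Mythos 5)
Your core ingredients are the right ones and match the paper's: lemma~\ref{exp1} when $x \in FV(M)$, and the combination of $\f'_i$, the $\o$-rule and $\f_e$ when $x \notin FV(M)$, followed by the environment bookkeeping. But there is a genuine gap caused by where you place the case analysis. You split on $x \in FV(M)$ at the top and only worry about the shape of $U$ inside the second case; the paper instead splits on $U$ first ($U=\o$, $U \in {\mathbb T}$, $U = \sqcap_{i=1}^k T_i$) precisely because the system is \emph{strict}: the conclusion of $\f_i$ and $\f'_i$ is always of the form $V \f T$ with $T \in {\mathbb T}$. Concretely: (i) the case $U = \o$ is simply absent from your argument, and it cannot be reached by either arrow rule since $\o \f \o$ and $V \f \o$ are not types; the paper dispatches it immediately via lemma~\ref{structyping}.\ref{structypingtwo}. (ii) In your first case you claim $\l x.M : \<\G \v \o \f U\>$ for general $U$, but $\o \f U$ is not a well-formed type unless $U \in {\mathbb T}$; the componentwise decomposition you mention only for the $x \in FV(M)$ case is needed here as well. (iii) In the $x \in FV(M)$ case, ``recombining with $\sqcap'_i$ to get $\l x.M : \<\G_1 \v V \f U\>$'' produces an ill-formed type when $U$ is a proper intersection ($\sqcap_i (V \f T_i)$ is not $V \f \sqcap_i T_i$, and the latter does not exist in ${\mathbb T}$); the correct route, which the paper takes, is to derive $(\l x.M)N : \<\G {\u^{\cal U}} \v T_i\>$ for each component and only then apply $\sqcap_i$ to the whole judgement.

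Two smaller remarks on the final environment step. In the $x \in FV(M)$ case no padding actually occurs: $dom(\G) = FV(M[x:=N]) = FV((\l x.M)N) = {\cal U}$, so $\G{\u^{\cal U}} = \G$ and rule $\sqsubseteq$ applies directly via $\G \sqsubseteq \G_1 \sqcap \G_2$. Your justification via ``$\o$ is the top element'' would in fact point the wrong way if padding were needed (one would need $\o \sqsubseteq W$, which by lemma~\ref{omega}.\ref{omegathree} forces $W = \o$), so it is fortunate rather than by design that the domains coincide. In the $x \notin FV(M)$ case your identification $\G \sqcap env^N_\o = \G{\u^{\cal U}}$ is correct and is exactly the paper's computation.
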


\begin{proof}
We have three cases:
\begin{itemize}
\item If $U = \o$: By lemma~\ref{structyping}.\ref{structypingtwo},
we have $(\l x.M)N :\<\G {\u^{\cal U}} \v \o\>$.
\item If $U \in {\mathbb T}$: We have two cases:
\begin{itemize}
\item If $x \in FV(M)$, then, by lemma~\ref{exp1}, $\exists~V$ type and
$\exists~\G_1,\G_2$ type environments such that $M : \<\G_1, x : V
\v U\>$, $N : \<\G_2 \v V\>$ and $\G \sqsubseteq \G_1 \sqcap
\G_2$. Hence, by rules $\f_i$ and $\f_e$, $\l x. M : \<\G_1 \v V
\f U\>$ and $(\l x. M)N : \<\G_1 \sqcap \G_2 \v U\>$. Since $FV((\l
x.M)N) = FV(M[x:=N])$, then $\G {\u^{\cal U}} = \G$, and, by rule
$\sqsubseteq$, $(\l x.M)N :\<\G {\u^{\cal U}} \v U\>$.
\item If $x \not \in FV(M)$, then $M : \<\G \v U\>$ and, by rule
$\f'_i$, $\l y.M : \<\G \v \o \f U\>$. By rule $\o$, $N :
\<env_\o^N \v \o\>$, then, by rule $\f_e$, $(\l x.M)N :\<\G \sqcap
env_\o^N \v U\>$. Since  $FV((\l x.M)N) = FV(M[x:=N]) \cup FV(N)$,
then $\G {\u^{\cal U}} = \G \sqcap env_\o^N$.
\end{itemize}
\item If $U = \sqcap_{i=1}^k T_i$ where $\forall~1 \leq i \leq k$,
$T_i \in {\mathbb T}$: By rule $\sqsubseteq$, we have $\forall~1
\leq i \leq k$, $M[x:=N] : \<\G \v T_i\>$, then, by the previous
case, $\forall~1 \leq i \leq k$, $(\l x.M)N :\<\G {\u^{\cal U}} \v
T_i\>$, then, by $k-1$ applications of rule $\sqcap_i$, $(\l x.M)N :\<\G {\u^{\cal U}}
\v U\>$.
\end{itemize}
\end{proof}
Next, we give the main block for the proof of subject expansion for $\beta$.
\begin{theorem}\label{betaexp}
If $N : \<\G \v  U\>$ and $M \rhd_{\beta} N$, then $M :
\<\G {\u^{M}} \v U\>$.
\end{theorem}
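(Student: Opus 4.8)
The plan is to prove Theorem~\ref{betaexp} by induction on the derivation of $M \rhd_{\beta} N$, with the single base case (a root redex) discharged by lemma~\ref{exp2}, and the three compatibility cases reduced to the primary induction hypothesis via a secondary induction on the typing derivation of $N$.

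\textbf{Base case.} Here $M=(\l x.P)Q$ and $N=P[x:=Q]$. By the Barendregt convention $x\notin FV(Q)$, and $dom(\G)=FV(P[x:=Q])\subseteq FV(M)$ by lemma~\ref{structyping}.\ref{structypingone}, so $\G\u^{M}$ is defined. Lemma~\ref{exp2}, applied with $P$ and $Q$ in the roles of $M$ and $N$, gives at once $(\l x.P)Q:\<\G\u^{FV((\l x.P)Q)}\v U\>$, i.e.\ $M:\<\G\u^{M}\v U\>$; so this case is entirely absorbed by lemma~\ref{exp2}.

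\textbf{Compatibility cases.} These are $M=\l x.P\rhd_{\beta}\l x.Q=N$ with $P\rhd_{\beta}Q$; $M=M_1M_2\rhd_{\beta}N_1M_2=N$ with $M_1\rhd_{\beta}N_1$; and $M=M_1M_2\rhd_{\beta}M_1N_2=N$ with $M_2\rhd_{\beta}N_2$. For each I would run a \emph{secondary} induction on the typing derivation of $N:\<\G\v U\>$, using the primary induction hypothesis on the strictly smaller reduction whenever needed. By the shape of $N$, the last rule of that derivation can only be $\o$, $\sqcap_i$, $\sqsubseteq$, or the rule introducing the head constructor of $N$ ($\f_i$/$\f'_i$ if $N$ is an abstraction, $\f_e$ if $N$ is an application); rule $ax$ and the rules building the other shape are impossible. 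The $\o$ case is immediate from lemma~\ref{structyping}.\ref{structypingtwo} (note $(env^N_\o)\u^{M}=env^M_\o$). The $\sqcap_i$ case: apply the secondary IH to both premises and re-apply $\sqcap_i$. The $\sqsubseteq$ case: from $\<\G'\v U'\>\sqsubseteq\<\G\v U\>$ we get $\G\sqsubseteq\G'$ and $U'\sqsubseteq U$ by lemma~\ref{Phisub}.\ref{Phisubthree}; the secondary IH gives $M:\<\G'\u^{M}\v U'\>$; since $\u$ only appends $\o$-entries, $\G\u^{M}\sqsubseteq\G'\u^{M}$ by lemma~\ref{Phisub}.\ref{Phisubtwo}, and rules $\sqsubseteq_{\<\>}$, $\sqsubseteq$ conclude. (One could instead treat the two abstraction cases via the generation lemma~\ref{newgen}.\ref{newgentwo}, \ref{newgenthree}, but the uniform secondary induction also covers the application cases, for which no generation lemma for arbitrary applications is stated.)

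\textbf{The head-constructor rule.} This is the substantive part. If $N=N_1M_2$ is typed by $\f_e$ from $N_1:\<\G_1\v V\f T\>$ and $M_2:\<\G_2\v V\>$ (so $\G=\G_1\sqcap\G_2$ and $U=T$), the primary IH on $M_1\rhd_{\beta}N_1$ gives $M_1:\<\G_1\u^{M_1}\v V\f T\>$ (well defined since $dom(\G_1)=FV(N_1)\subseteq FV(M_1)$ by theorem~\ref{conf}.\ref{confone}), hence $M_1M_2:\<(\G_1\u^{M_1})\sqcap\G_2\v T\>$ by $\f_e$; it then remains to check the environment identity $(\G_1\u^{M_1})\sqcap\G_2=(\G_1\sqcap\G_2)\u^{M_1M_2}$, a case split on whether a variable lies in $FV(N_1)$, $FV(M_1)$, $FV(M_2)$ exactly parallel to lemma~\ref{restlem}.\ref{restlemtwo} (with equality here in place of $\sqsubseteq$). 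The case $N=M_1N_2$ is symmetric, applying the primary IH to $M_2\rhd_{\beta}N_2$. If $N=\l x.Q$ is typed by $\f_i$ from $Q:\<\G,x:W\v T\>$ (so $U=W\f T$), then $x\in FV(Q)\subseteq FV(P)$ by lemma~\ref{structyping}.\ref{structypingone} and theorem~\ref{conf}.\ref{confone}, so the primary IH on $P\rhd_{\beta}Q$ gives $P:\<(\G,x:W)\u^{P}\v T\>=\<(\G\u^{\l x.P}),x:W\v T\>$ and rule $\f_i$ finishes. If $N=\l x.Q$ is typed by $\f'_i$ from $Q:\<\G\v T\>$ with $x\notin dom(\G)$ (so $U=\o\f T$), the primary IH gives $P:\<\G\u^{P}\v T\>$, and I split on whether $x\in FV(P)$: if not, $\f'_i$ applies directly and $\G\u^{P}=\G\u^{\l x.P}$; if $x\in FV(P)$ — i.e.\ $P\rhd_{\beta}Q$ erased every occurrence of $x$ — then $\G\u^{P}=(\G\u^{\l x.P}),x:\o$ and rule $\f_i$ gives $\l x.P:\<\G\u^{\l x.P}\v\o\f T\>$. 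I expect precisely these two points to be the only delicate ones: the $\u$-bookkeeping identity in the $\f_e$ case, and this last ``vanishing bound variable'' subcase, where a redex inside an abstraction body destroys all occurrences of $x$ and one must re-supply $x:\o$ exactly. Everything else is routine rule-chasing.
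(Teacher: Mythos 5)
Your proof is correct and takes essentially the same route as the paper: the paper runs a single induction on the typing derivation of $N$ (with the root-redex case discharged by lemma~\ref{exp2} and the congruence cases by the induction hypothesis plus the same $\u$-bookkeeping identities), which is your nested induction with the two levels swapped. The one point you spell out more explicitly than the paper --- the $\f'_i$ case where the contracted redex erases every occurrence of the bound variable, so one must switch to $\f_i$ and re-supply $x:\o$ --- is covered there only by the phrase ``similar to the above case''.
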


\begin{proof}By induction on the derivation $N : \<\G \v  U\>$.
\begin{itemize}
\item
If $\F{}{x : \<x : T \v T\>}$ and $M \rhd_{\beta} x$, then $M = (\l y. M_1)M_2$ where $y \not \in FV(M_2)$ and $x = M_1[y:=M_2]$.  By lemma~\ref{exp2},
$M : \<(x:T) {\u^{M}} \v T\>$.
\item
If $\F{}{N : \<env^N_\o \v \o \>}$ and $M \rhd_{\beta} N$, then
since by theorem~\ref{conf}.\ref{confone}, $FV(N) \subseteq
FV(M)$, $(env^N_\o) {\u^{M}} = env^M_\o$.  By $\o$, $M : 
\<env^M_\o \v \o\>$.  Hence, $M  : \<(env^N_\o) {\u^{M}} \v \o\>$.
\item
If $\F{N : \<\G , x :U \v T\>}{\l x. N : \<\G \v U \f T\>}$ and $M \rhd_{\beta} \l x. N$, then we have two cases:
\begin{itemize}
\item
If $M = \l x.M'$ where $M' \rhd_{\beta} N$, then by IH, $M' :
\<(\G, x:U) {\u^{M'}} \v T\>$. Since by
theorem~\ref{conf}.\ref{confone} and
lemma~\ref{structyping}.\ref{structypingone}, $x \in FV(N)
\subseteq FV(M')$, then we have $(\G, x:U) {\u^{FV(M')}} =
\G {\u^{FV(M')\setminus\{x\}}}, x:U$ and
$\G {\u^{FV(M')\setminus\{x\}}} = \G {\u^{\l x.M'}}$. Hence,
 $M' : \<\G {\u^{\l x.M'}}, x:U \v T\>$ and finally, by $\f_i$, $\l x.M' :\<\G {\u^{\l x.M'}} \v U\f T\>$.
\item
 If $M = (\l y. M_1)M_2$ where $y \not \in FV(M_2)$
and $\l x. N = M_1[y:=M_2]$, then,
by lemma~\ref{exp2},
since $y \not \in FV(M_2)$ and $M_1[y:=M_2] : \<\G \v U \f T\>$,
we have $(\l y. M_1)M_2  : \<\G {\u^{(\l y. M_1)M_2}} \v U \f T\>$.
\end{itemize}
\item
If $\F{N : \<\G \v T\>\;\;\;x \not \in dom(\G)}{\l x. N : \<\G \v \o \f T\>}$ and $M \rhd_{\beta} N$  then similar to the above case.
\item
If $\F{N_1 : \<\G_1 \v U \f T\> \;\;\; \hspace{0.2in} N_2 : \<\G_2 \v
U\>}{N_1 \; N_2 : \<\G_1 \sqcap \G_2 \v T\>}$ and $M \rhd_{\beta} N_1N_2$,
we have three cases:
\begin{itemize}
\item
$M = M_1N_2$ where $M_1 \rhd_{\beta} N_1$. By IH, $M_1 : \<\G_1 {\u^{M_1}} \v U \f T\>$.  It is easy to show that $(\G_1 \sqcap \G_2) {\u^{M_1N_2}} = \G_1 {\u^{M_1}}\sqcap \G_2$. Now use $\f_e$.
\item
$M = N_1M_2$ where $M_2 \rhd_{\beta} N_2$. Similar to the above case.
\item
$M = (\l x.M_1)M_2$ where $x \not \in FV(M_2)$ and $N_1N_2 = M_1[x:=M_2]$.
By lemma~\ref{exp2}, $(\l x.M_1)M_2 :\<(\G_1 \sqcap \G_2) {\u^{(\l x.M_1)M_2}} \v T\>$.
\end{itemize}
\item
If $\F{N: \<\G \v U_1\> \;\;\;\hspace{0.2in} N : \<\G \v U_2\>}
{N : \<\G \v U_1 \sqcap U_2\>}$ and $M \rhd_{\beta} N$ then  use IH.
\item
Let $\F{N : \<\G\v U\> \;\;\;\hspace{0.2in} \<\G\v U\> \sqsubseteq \<\G'\v U'\>}
{N : \<\G'\v U'\>}$ and $M \rhd_{\beta} N$.  By
lemma~\ref{Phisub}.\ref{Phisubthree}, $\G' \sqsubseteq \G$ and $U \sqsubseteq U'$. It is easy to show that
$\G' {\u^{M}} \sqsubseteq \G {\u^{M}}$ and hence by  lemma~\ref{Phisub}.\ref{Phisubthree}, $\<\G {\u^{M}}\v U\> \sqsubseteq \<\G' {\u^{M}}\v U'\>$. By IH, $M {\u^{M}} : \<\G\v U\>$.  Hence, by $\sqsubseteq_{\<\>}$, we have $M : \<\G' {\u^{M}}\v U'\>$.
\end{itemize}\end{proof}

\begin{corollary}[Subject expansion for $\beta$]\label{finalbetaexp}
\mbox \hfill{}\\
If $N : \<\G \v  U\>$ and $M \rhd^*_{\beta} N$, then $M :
\<\G {\u^{M}} \v U\>$.
\end{corollary}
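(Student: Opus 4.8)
The plan is to prove Corollary~\ref{finalbetaexp} by induction on the length $\ell$ of the reduction sequence $M \rhd^*_\beta N$, using Theorem~\ref{betaexp} as the single-step building block. The base case $\ell = 0$ is immediate: then $M = N$, and since $FV(M) = FV(N) = dom(\G)$ by lemma~\ref{structyping}.\ref{structypingone}, we have $\G{\u^M} = \G$, so the hypothesis $N : \<\G \v U\>$ already gives the conclusion.

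For the inductive step, suppose $M \rhd_\beta M' \rhd^*_\beta N$ where the tail reduction has length $\ell$. By the induction hypothesis applied to $M' \rhd^*_\beta N$, we obtain $M' : \<\G{\u^{M'}} \v U\>$. Now I would like to apply Theorem~\ref{betaexp} to the single step $M \rhd_\beta M'$ with the typing $M' : \<\G{\u^{M'}} \v U\>$; this yields $M : \<(\G{\u^{M'}}){\u^M} \v U\>$. The remaining task is to check that $(\G{\u^{M'}}){\u^M} = \G{\u^M}$, i.e.\ that iterating the enlargement operation twice — first to $FV(M')$, then to $FV(M)$ — is the same as enlarging directly to $FV(M)$. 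This holds because by theorem~\ref{conf}.\ref{confone} we have $FV(N) \subseteq FV(M') \subseteq FV(M)$ (the second inclusion from $M \rhd_\beta M'$), and $dom(\G) = FV(N)$; the $\u$ operation simply pads the environment with $\o$-bindings for the extra variables, and padding from $FV(N)$ up to $FV(M')$ and then up to $FV(M)$ assigns $\o$ to exactly the variables in $FV(M) \setminus FV(N)$, which is what a single padding up to $FV(M)$ does. So the two environments are literally equal, and the induction goes through.

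I expect no serious obstacle here — the corollary is a routine transitive-closure wrap-up of Theorem~\ref{betaexp}, and the only point requiring a line of care is the bookkeeping identity $(\G{\u^{M'}}){\u^M} = \G{\u^M}$, which is precisely the kind of "easy to show" environment-manipulation fact the paper has been using throughout (e.g.\ in the proof of Theorem~\ref{betaexp} itself, where identities like $(\G_1 \sqcap \G_2){\u^{M_1 N_2}} = \G_1{\u^{M_1}} \sqcap \G_2$ are invoked without proof). One should just make sure the chain of free-variable inclusions is stated correctly so that the $\u$ operations are all well-defined (each requires the current domain to be contained in the target variable set), which follows from theorem~\ref{conf}.\ref{confone}.

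\begin{proof}
By induction on the length of the derivation $M \rhd^*_\beta N$, using theorem~\ref{betaexp}. If the length is $0$, then $M = N$, so by lemma~\ref{structyping}.\ref{structypingone}, $dom(\G) = FV(N) = FV(M)$ and hence $\G{\u^M} = \G$; thus $M : \<\G{\u^M} \v U\>$. Otherwise, $M \rhd_\beta M' \rhd^*_\beta N$. By the induction hypothesis, $M' : \<\G{\u^{M'}} \v U\>$, and by theorem~\ref{conf}.\ref{confone} (applied to $M \rhd_\beta M'$ and to $M' \rhd^*_\beta N$), $FV(N) \subseteq FV(M') \subseteq FV(M)$, so the relevant enlargements are well-defined. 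By theorem~\ref{betaexp} applied to the step $M \rhd_\beta M'$ with typing $M' : \<\G{\u^{M'}} \v U\>$, we get $M : \<(\G{\u^{M'}}){\u^M} \v U\>$. Finally, since $dom(\G) = FV(N)$, the environment $(\G{\u^{M'}}){\u^M}$ is obtained from $\G$ by adjoining the binding $x : \o$ for each $x \in FV(M) \setminus FV(N)$, which is exactly $\G{\u^M}$. Hence $M : \<\G{\u^M} \v U\>$.
\end{proof}
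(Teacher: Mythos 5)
Your proof is correct and follows essentially the same route as the paper: induction on the length of the reduction using Theorem~\ref{betaexp} for the single step, together with the padding identity $(\G{\u^{M'}}){\u^{M}} = \G{\u^{M}}$, which is exactly the fact the paper invokes (for $FV(P)\subseteq FV(Q)$, $(\G{\u^{P}}){\u^{Q}} = \G{\u^{Q}}$). Your write-up merely makes explicit the base case and the free-variable inclusions from Theorem~\ref{conf}.\ref{confone} that the paper leaves implicit.
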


\begin{proof}
By induction on the length of the derivation $M \rhd^*_{\beta} N$
using theorem~\ref{betaexp} and the fact that if $FV(P) \subseteq FV(Q)$, then $(\G {\u^{P}}) {\u^{Q}} = \G {\u^{Q}}$.
\end{proof}

\section{The realisability semantics,  its soundness and completeness}
\label{realsem}
In this section we give a realisability semantics for our type system and establish both the soundness and completeness of this semantics.

We start with the definition of the function space and saturated sets.  

\begin{definition}  Let ${\cal X},{\cal Y} \subseteq {\cal M}$.
\begin{enumerate}
\item We use ${\cal P}({\cal X})$
to denote the powerset of ${\cal X}$, i.e.\ $\{{\cal Y} \; / \;
{\cal Y} \subseteq {\cal X}\}$.
\item We define ${\cal X} \fx {\cal Y} = \{M \in {\cal M}$ / $M \; N \in
{\cal Y}$ for all $N \in {\cal X}\}$.
\item Let $r \in \{f, \beta\}$.  We say that ${\cal X}$ is $r$-saturated if whenever
$M \rhd_r^* N$ and $N \in {\cal X}$, then $M \in
{\cal X}$.
\end{enumerate}
\end{definition}

\begin{lemma}\label{fx+} Let $r \in \{f, \beta\}$. 
\begin{enumerate}
\item \label{fx+one}
If ${\cal X}$ is $\be$-saturated, then ${\cal X}$ is $f$-saturated.
\item  \label{fx+two} If ${\cal X},{\cal Y}$ are $r$-saturated sets,
then ${\cal X} \cap {\cal Y}$ is  $r$-saturated.
\item  \label{fx+three} If ${\cal Y}$ is  $r$-saturated, then, for every set
${\cal X} \subseteq {\cal M}$, ${\cal X} \fx {\cal Y}$ is
 $r$-saturated.
\end{enumerate}
\end{lemma}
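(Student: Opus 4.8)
The plan is to prove Lemma~\ref{fx+} by establishing the three parts in order, each being a direct unfolding of the relevant definitions.

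For part~\ref{fx+one}, the key observation is that weak head reduction is a restriction of $\beta$-reduction, so $\rhd_f^*$ is contained in $\rhd_\beta^*$. First I would note that if $M \rhd_f N$ then $M = (\l x.P)Q\,Q_1\dots Q_n$ and $N = P[x:=Q]\,Q_1\dots Q_n$, which is exactly a $\beta$-contraction of the head redex (using compatibility of $\rhd_\beta$ under application). Hence $M \rhd_f^* N$ implies $M \rhd_\beta^* N$. Now suppose $\mathcal{X}$ is $\beta$-saturated and $M \rhd_f^* N$ with $N \in \mathcal{X}$; then $M \rhd_\beta^* N$, so by $\beta$-saturation $M \in \mathcal{X}$, i.e.\ $\mathcal{X}$ is $f$-saturated.

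For part~\ref{fx+two}, fix $r$ and suppose $\mathcal{X},\mathcal{Y}$ are $r$-saturated. Let $M \rhd_r^* N$ with $N \in \mathcal{X} \cap \mathcal{Y}$. Then $N \in \mathcal{X}$ gives $M \in \mathcal{X}$ by $r$-saturation of $\mathcal{X}$, and $N \in \mathcal{Y}$ gives $M \in \mathcal{Y}$ similarly; hence $M \in \mathcal{X} \cap \mathcal{Y}$. For part~\ref{fx+three}, suppose $\mathcal{Y}$ is $r$-saturated and let $\mathcal{X} \subseteq \mathcal{M}$ be arbitrary. Let $M \rhd_r^* N$ with $N \in \mathcal{X} \fx \mathcal{Y}$; I must show $M \in \mathcal{X} \fx \mathcal{Y}$, i.e.\ $M\,P \in \mathcal{Y}$ for every $P \in \mathcal{X}$. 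Fix such a $P$. Since $N \in \mathcal{X}\fx\mathcal{Y}$ we have $N\,P \in \mathcal{Y}$. By theorem~\ref{conf}.\ref{conftwo} (for $r = f$) or by compatibility of $\rhd_\beta$ under application (for $r = \beta$), $M \rhd_r^* N$ implies $M\,P \rhd_r^* N\,P$. Since $N\,P \in \mathcal{Y}$ and $\mathcal{Y}$ is $r$-saturated, $M\,P \in \mathcal{Y}$. As $P$ was arbitrary, $M \in \mathcal{X}\fx\mathcal{Y}$.

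None of the three parts presents a genuine obstacle; the only point requiring slight care is the observation, used in parts~\ref{fx+one} and~\ref{fx+three}, that $\rhd_f^*$ refines $\rhd_\beta^*$ and that both reductions are preserved by forming applications on the right --- for $\rhd_f^*$ this is exactly theorem~\ref{conf}.\ref{conftwo}, and for $\rhd_\beta^*$ it is immediate from compatibility. I would state these facts once at the start and reuse them.
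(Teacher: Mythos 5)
Your proof is correct and follows essentially the same route as the paper: part~\ref{fx+one} via $\rhd_f^*\subseteq\rhd_\beta^*$, part~\ref{fx+two} by direct unfolding, and part~\ref{fx+three} by passing from $M\rhd_r^* N$ to $M\,P\rhd_r^* N\,P$ and invoking $r$-saturation of ${\cal Y}$. Your extra remark that for $r=\beta$ this last step comes from compatibility (since theorem~\ref{conf}.\ref{conftwo} is stated only for $f$) is a small but welcome precision over the paper's wording.
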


\begin{proof}
\ref{fx+one}.\ Note that $\rhd_f^*\subset \rhd_\be^*$.
\ref{fx+two}.\
is easy. 
\ref{fx+three}.\  Let $N \in {\cal X} \fx {\cal Y}$, $M \rhd_r^* N$
and $P \in {\cal X}$. Then, by theorem~\ref{conf}.\ref{conftwo},
$M \; P \rhd_r^* N \; P$ and $N \; P \in {\cal Y}$. Since ${\cal Y}$
is  $r$-saturated, then $M \; P \in {\cal Y}$. Thus, $M \in {\cal X}
\fx {\cal Y}$.
\end{proof}

We interpret basic types as saturated sets. The interpretation of complex types  is built up from smaller types in the obvious way.
\begin{definition}\label{defint}
 Let $r \in \{f, \beta\}$. 
\begin{enumerate}
\item\label{defintone}
An  $r$-interpretation ${\cal I}: {\cal A} \mapsto {\cal P}({\cal M})$
is  a function 
such that: \\ $\forall~a \in {\cal A}$, ${\cal I}(a)$
is $r$-saturated.
\item\label{definttwo}
An $r$-interpretation ${\cal I}$ can be extended to ${\mathbb U}$ as follows:\\
${\cal I}(\omega) = {\cal M}$ \hspace{0.19in} $ {\cal I}(U_1 \sqcap U_2) = {\cal I}(U_1) \cap {\cal I}(U_2)$
\hspace{0.19in} ${\cal I}(U \f T) = {\cal I}(U) \fx
{\cal I}(T)$
\end{enumerate}
\end{definition}
\begin{lemma}
\label{beintiswint}
If ${\cal I}$ is a $\be$-interpretation then  ${\cal I}$ is an $f$-interpretation.
\end{lemma}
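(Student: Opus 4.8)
The statement to prove is Lemma~\ref{beintiswint}: if ${\cal I}$ is a $\be$-interpretation, then ${\cal I}$ is an $f$-interpretation. The plan is to unfold the two definitions and reduce the claim to an already-proved fact. By Definition~\ref{defint}.\ref{defintone}, ${\cal I}$ being a $\be$-interpretation means ${\cal I}$ is a function ${\cal A} \mapsto {\cal P}({\cal M})$ such that ${\cal I}(a)$ is $\be$-saturated for every $a \in {\cal A}$; being an $f$-interpretation means the same with $f$-saturated in place of $\be$-saturated. Since ${\cal I}$ is already the right kind of function (the domain and codomain conditions are identical), the only thing to check is that each ${\cal I}(a)$ is $f$-saturated.

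The key step is then simply an appeal to Lemma~\ref{fx+}.\ref{fx+one}: any $\be$-saturated set is $f$-saturated. So, fixing an arbitrary $a \in {\cal A}$, from ${\cal I}(a)$ being $\be$-saturated we conclude ${\cal I}(a)$ is $f$-saturated, and since $a$ was arbitrary, ${\cal I}$ satisfies the defining condition of an $f$-interpretation. That finishes the proof; I would phrase it as a one- or two-line argument, essentially ``Let $a \in {\cal A}$. By hypothesis ${\cal I}(a)$ is $\be$-saturated, hence by Lemma~\ref{fx+}.\ref{fx+one} it is $f$-saturated. Thus ${\cal I}$ is an $f$-interpretation.''

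There is essentially no obstacle here — the lemma is a bookkeeping corollary recording that the hierarchy of interpretations is nested the way the hierarchy of saturated sets is, and all the real content already lives in Lemma~\ref{fx+}.\ref{fx+one} (which in turn rests only on the inclusion $\rhd_f^* \subset \rhd_\be^*$). If anything needs a word of care, it is just noting that the extension clauses in Definition~\ref{defint}.\ref{definttwo} (for $\o$, $\sqcap$, and $\f$) are literally the same formulas in both cases, so the extended function agrees regardless of which notion of saturation we are working with; but since the lemma as stated only concerns ${\cal I}$ qua interpretation of atomic types, even this remark is optional. I would keep the proof to a single sentence citing Lemma~\ref{fx+}.\ref{fx+one}.
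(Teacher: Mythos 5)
Your proof is correct and is exactly the paper's argument: the paper's proof is simply ``Use lemma~\ref{fx+}.\ref{fx+one}'', which is the same one-line appeal to the fact that every $\be$-saturated set is $f$-saturated. Nothing further is needed.
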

\begin{proof}
Use lemma~\ref{fx+}.\ref{fx+one}.
\end{proof}
The next lemma shows that the interpretation of any type (basic or complex) is saturated, that the interpretation function respects the relation $\sqsubseteq$ and that we can in some sense expand the terms in the interpretation.
\begin{lemma}\label{combinedlem}
 Let $r \in \{f, \beta\}$ and let ${\cal I}$ be an $r$-interpretation.
\begin{enumerate}
\item\label{interpret}
For any  $U\in  {\mathbb U}$, we have ${\cal I}(U)$ is $r$-saturated.
\item
\label{intsub}
If $U \sqsubseteq V$, then
${\cal I}(U) \subseteq {\cal I}(V)$.
\item \label{neededcombine} Let $n \geq 0$ and $\forall 1 \leq i \not = j \leq n$, $x_i \not = x_j$.
If $\forall~N_i \in {\cal I}(U_i)$ ($1 \leq i \leq n$), $M[(x_i :=
N_i)_1^n] \in {\cal I}(U)$, then  \\$\l x_1....\l x_n .M \in {\cal I}(U_1 \f (U_2 \f (...\f
(U_n \f U)...)))$.

\end{enumerate}
\end{lemma}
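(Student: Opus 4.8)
The statement bundles three facts about an $r$-interpretation ${\cal I}$, and I would prove them in the given order since each feeds the next. For part~\ref{interpret}, the plan is a straightforward induction on the structure of $U \in {\mathbb U}$ using lemma~\ref{omega}.\ref{omegaone} (so $U$ is $\o$ or a $\sqcap$ of types in ${\mathbb T}$, and a ${\mathbb T}$-type is atomic or an arrow). The base cases are: ${\cal I}(\o) = {\cal M}$ is trivially $r$-saturated, and ${\cal I}(a)$ is $r$-saturated by the definition of an $r$-interpretation. The inductive cases are exactly lemma~\ref{fx+}: for $U_1 \sqcap U_2$ use \ref{fx+}.\ref{fx+two}, and for $U \f T$ use \ref{fx+}.\ref{fx+three} (the latter does not even need the IH on the domain type, only that ${\cal I}(T)$ is $r$-saturated).

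For part~\ref{intsub}, I would argue by induction on the derivation of $U \sqsubseteq V$, going through the rules \emph{ref}, \emph{tr}, $\sqcap_e$, $\sqcap$, $\f$ defining $\sqsubseteq$ on ${\mathbb U}$ (the environment/typing rules do not apply here since $U,V \in {\mathbb U}$). The cases \emph{ref}, \emph{tr} are immediate; $\sqcap_e$ says ${\cal I}(U_1 \sqcap U_2) = {\cal I}(U_1) \cap {\cal I}(U_2) \subseteq {\cal I}(U_1)$; $\sqcap$ is monotonicity of intersection. The one case requiring care is the arrow rule: from $U_2 \sqsubseteq U_1$ and $T_1 \sqsubseteq T_2$ we must show ${\cal I}(U_1) \fx {\cal I}(T_1) \subseteq {\cal I}(U_2) \fx {\cal I}(T_2)$, which uses the IH contravariantly on the domain (${\cal I}(U_2) \subseteq {\cal I}(U_1)$) and covariantly on the codomain (${\cal I}(T_1) \subseteq {\cal I}(T_2)$): if $M \, N \in {\cal I}(T_1)$ for all $N \in {\cal I}(U_1)$, then in particular for all $N \in {\cal I}(U_2)$, and each such $M\,N$ lies in ${\cal I}(T_2)$. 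Here I should be mildly careful that $\sqsubseteq$ is only defined between types of the same syntactic class, so the shapes of $U$ and $V$ in each rule are as displayed.

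Part~\ref{neededcombine} is the substantive one and the main obstacle. I would prove it by induction on $n$. For $n = 0$ the hypothesis is just $M \in {\cal I}(U)$ and the conclusion is $M \in {\cal I}(U)$. For the inductive step, suppose the claim holds for $n-1$. Given the hypothesis for $n$, I want to apply the IH to the term $\l x_n . M$ with the variables $x_1, \dots, x_{n-1}$: I need that for all $N_i \in {\cal I}(U_i)$ ($1 \le i \le n-1$), the term $(\l x_n. M)[(x_i := N_i)_1^{n-1}] = \l x_n. (M[(x_i := N_i)_1^{n-1}])$ lies in ${\cal I}(U_n \f U)$, i.e.\ in ${\cal I}(U_n) \fx {\cal I}(U)$. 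So fix such $N_1, \dots, N_{n-1}$ and fix $N_n \in {\cal I}(U_n)$; I must show $(\l x_n. M[(x_i := N_i)_1^{n-1}])\, N_n \in {\cal I}(U)$. By the original hypothesis, $M[(x_i := N_i)_1^n] \in {\cal I}(U)$, and (using the Barendregt convention so that $x_n \notin FV(N_i)$ for $i < n$) this term equals $(M[(x_i := N_i)_1^{n-1}])[x_n := N_n]$; moreover $(\l x_n. M[(x_i := N_i)_1^{n-1}])\, N_n \rhd_f (M[(x_i := N_i)_1^{n-1}])[x_n := N_n]$ by one weak head reduction step, hence also $\rhd_r^*$ for $r \in \{f, \beta\}$. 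Since ${\cal I}(U)$ is $r$-saturated by part~\ref{interpret}, the redex is in ${\cal I}(U)$, which is what was needed. Then the IH gives $\l x_1 \dots \l x_{n-1}.(\l x_n.M) = \l x_1 \dots \l x_n. M \in {\cal I}(U_1 \f (\dots \f (U_{n-1} \f (U_n \f U))\dots))$, completing the induction. The points to watch are the clean bookkeeping of simultaneous versus iterated substitution (relying on BC for variable freshness) and invoking part~\ref{interpret} for the saturation that powers the expansion step.
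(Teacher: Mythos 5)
Your proposal is correct and takes essentially the same route as the paper, which proves part~\ref{interpret} by induction on $U$ via lemma~\ref{fx+}, part~\ref{intsub} by induction on the derivation of $U \sqsubseteq V$, and part~\ref{neededcombine} by induction on $n$ using part~\ref{interpret}, exactly as you spell out. The only quibble is in part~\ref{neededcombine}: the Barendregt convention cannot literally force $x_n \notin FV(N_i)$ for arbitrary $N_i \in {\cal I}(U_i)$, but $\alpha$-renaming the bound $x_n$ to a fresh variable makes the weak-head redex still contract to the simultaneous substitution $M[(x_i:=N_i)_1^n]$, so the argument goes through and is at the same level of rigour as the paper's own soundness proof.
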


\begin{proof}
\ref{interpret}.\ By induction on $U$ using lemma~\ref{fx+}.\\
\ref{intsub}.\
By induction of the derivation $U \sqsubseteq V$.
\ref{neededcombine}.\ By induction on $n \geq 0$ using \ref{interpret}.
\end{proof}

We now show the soundness of our sematics.
\begin{theorem}[Soundness]\label{adeq}
Let $r \in \{f, \be\}$.
If $M : \<(x_i:U_i)_n \v U\>$, ${\cal I}$ is an $r$-interpretation and
$\forall 1 \leq i \leq n$, $N_i \in {\cal I}(U_i)$, then
$M[(x_i:=N_i)_1^n] \in {\cal I}(U)$.
\end{theorem}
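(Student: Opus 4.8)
The plan is to prove this by induction on the derivation of $M : \<(x_i:U_i)_n \v U\>$, following the by-now-standard pattern for adequacy/soundness of realisability semantics. Write $\sigma$ for the substitution $[(x_i:=N_i)_1^n]$. The idea is that for each typing rule, we assume the statement holds for the premises and derive it for the conclusion, using the closure properties of the interpretations established in lemma~\ref{combinedlem} (saturation, monotonicity under $\sqsubseteq$, and the ``expansion'' clause \ref{neededcombine}), together with the explicit formulas of definition~\ref{defint} and the fact (lemma~\ref{structyping}.\ref{structypingone}) that the environment records exactly the free variables of $M$.

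The case analysis runs as follows. For the axiom $x : \<x:T \v T\>$: here $n=1$, $\sigma = [x:=N_1]$ with $N_1 \in {\cal I}(T)$, and $x\sigma = N_1 \in {\cal I}(T)$. For rule $\o$: ${\cal I}(\o) = {\cal M}$, so there is nothing to prove. For $\f_i$, with premise $M:\<\G,x:U\v T\>$ and conclusion $\l x.M : \<\G\v U\f T\>$: by the induction hypothesis $M[(x_i:=N_i)_1^n][x:=N] \in {\cal I}(T)$ for every $N\in {\cal I}(U)$; applying lemma~\ref{combinedlem}.\ref{neededcombine} with a single abstracted variable yields $\l x. (M\sigma) \in {\cal I}(U\f T)$, and by the Barendregt convention $\l x.(M\sigma) = (\l x.M)\sigma$. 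The rule $\f'_i$ is the same, noting $x\notin dom(\G)$ so $N\in {\cal I}(\o)={\cal M}$ places no real constraint. For $\f_e$, with premises $M_1 : \<\G_1\v U\f T\>$ and $M_2 : \<\G_2\v U\>$ and conclusion $M_1 M_2 : \<\G_1\sqcap\G_2 \v T\>$: one splits the combined substitution over $dom(\G_1)\cup dom(\G_2)$; since a substituent $N_i$ landing in ${\cal I}(U_i\sqcap U_i') = {\cal I}(U_i)\cap{\cal I}(U_i')$ lies in both ${\cal I}(U_i)$ and ${\cal I}(U_i')$, the induction hypothesis gives $M_1\sigma \in {\cal I}(U)\fx {\cal I}(T)$ and $M_2\sigma \in {\cal I}(U)$, whence $(M_1 M_2)\sigma = (M_1\sigma)(M_2\sigma) \in {\cal I}(T)$ by the definition of $\fx$. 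For $\sqcap_i$ the two induction hypotheses give membership in ${\cal I}(U_1)$ and ${\cal I}(U_2)$, hence in the intersection ${\cal I}(U_1\sqcap U_2)$. For the subsumption rule $\sqsubseteq$, we have $\<\G\v U\>\sqsubseteq\<\G'\v U'\>$, so by lemma~\ref{Phisub}.\ref{Phisubthree} $\G'\sqsubseteq\G$ and $U\sqsubseteq U'$; by lemma~\ref{combinedlem}.\ref{intsub} the substituents coming from the ``larger'' environment $\G'$ still land in the required ${\cal I}(U_i)$, and the conclusion $M\sigma\in {\cal I}(U)\subseteq {\cal I}(U')$ follows again by lemma~\ref{combinedlem}.\ref{intsub}.

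The main obstacle is the bookkeeping around environments: since the system is relevant and non-weakening, one must be careful that in the $\f_e$ and $\sqcap_i$ cases the single substitution $[(x_i:=N_i)_1^n]$ indexed by the conclusion's environment restricts correctly to substitutions fitting each premise's (possibly smaller, possibly intersection-decorated) environment, and that $\o$-typed variables, which carry ${\cal I}(\o)={\cal M}$, impose no constraint so they can always be supplied. This is exactly where lemma~\ref{structyping}.\ref{structypingone} (the environment equals $FV(M)$) and the algebra of $\sqcap$ on environments are used. The $\alpha$-conversion / Barendregt-convention point in the $\f_i$, $\f'_i$ cases — that bound variables are disjoint from the domain of $\sigma$, so $\l x.M$ substitutes as $\l x.(M\sigma)$ — is routine but should be mentioned. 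Everything else is a direct unfolding of definition~\ref{defint} against the matching typing rule.
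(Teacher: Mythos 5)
Your proposal is correct and follows essentially the same route as the paper: induction on the typing derivation, with the axiom, $\o$, $\f_e$, $\sqcap_i$ and $\sqsubseteq$ cases handled exactly as in the paper (using lemma~\ref{Phisub}.\ref{Phisubthree}, lemma~\ref{combinedlem}.\ref{intsub} and the relevance property of environments). The only difference is presentational: in the $\f_i$ and $\f'_i$ cases you delegate to lemma~\ref{combinedlem}.\ref{neededcombine}, whereas the paper inlines the same content (the saturation/weak-head-reduction step, including the ${\cal I}(U)=\emptyset$ subcase, which your cited lemma absorbs since $\emptyset \fx {\cal Y}={\cal M}$).
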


\begin{proof}
By induction on the derivation  $M : \<(x_i:U_i)_n \v U\>$.

\begin{itemize}
\item
Let $\F{}{x : \<(x:T) \v T\>}$.  If $N \in {\cal I}(T)$ then
$x[x:=N] = N \in {\cal I}(T)$.

\item
Let $\F{}{M : \<env_\o^{M} \v \o\>}$ where $env_\o^{M} =
(x_i:\omega)_n$.

We have $M[(x_i:=N_i)_1^n] \in {\cal M} = {\cal I}(\omega)$.

\item Let
$\F{P : \< (x_i:U_i)_1^n ,x: U \v T\>} {\l x. P : \< (x_i:U_i)_n\v
U \f T\>}$.  \\
If ${\cal I}(U)=\emptyset$ then $(\l x.
P)[(x_i:=N_i)_1^n] \in {\cal I}(U) \fx {\cal I}(T) = {\cal M}$.\\
 If ${\cal I}(U)\not =\emptyset$ then  let $N \in {\cal I}(U)$.
By IH, $P[(x_i:=N_i)_1^n,x:=N] \in {\cal I}(T)$. By
lemma~\ref{combinedlem}.\ref{interpret}, ${\cal I}(T)$ is
$r$-saturated. \\Moreover, $(\l x.P)[(x_i:=N_i)_1^n] \; N \rhd_r^*
P[(x_i:=N_i)_1^n,x:=N]$.  Hence,\\
$(\l x.P)[(x_i:=N_i)_1^n] N \in {\cal I}(T)$ and $(\l x.
P)[(x_i:=N_i)_1^n] \in {\cal I}(U) \fx {\cal I}(T)$.

\item Let
$\F{P : \< (x_i:U_i)_n \v T\>\;\;\;x \neq x_i} {\l x. P : \<
(x_i:U_i)_n\v \omega \f T\>}$ and $N \in {\cal M}$.
Note that $x \not \in FV(P)$.

By IH, $P[(x_i:=N_i)_1^n] \in {\cal
I}(T)$. By lemma~\ref{combinedlem}.\ref{interpret}, ${\cal I}(T)$ is $r$-saturated.\\
Moreover, $(\l x. P)[(x_i:=N_i)_1^n] \; N \rhd_r^*
P[(x_i:=N_i)_1^n]$.
Hence\\
$(\l x.P)[(x_i:=N_i)_1^n] \; N \in {\cal I}(T)$ and $(\l x.
P)[(x_i:=N_i)_1^n] \in {\cal I}(\o) \fx {\cal I}(T)$.

\item Let  $\F{M_1 : \<\G_1 \v U \f T\>
\;\;\; M_2 : \<\G_2 \v U\>} {M_1 \; M_2 : \<\G_1 \sqcap \G_2
 \v T\>}$ where
$\G_1 = (x_i:U_i)_n, (y_j:V_j)_m$, $\G_2 =
(x_i:U'_i)_n,(z_k:W_k)_l$ and $\G_1 \sqcap \G_2 = (x_i:U_i
\sqcap U'_i)_n,(y_j:V_j)_m,(z_k:W_k)_l$.
Let $\forall 1 \leq i \leq n, P_i \in {\cal I}(U_i \sqcap U'_i)$,
$\forall 1 \leq j \leq m, Q_j \in {\cal I}(V_j)$ and $\forall 1
\leq k \leq l, R_k \in {\cal I}(W_k)$.
By IH, $M_1[(x_i:=P_i)_1^n,(y_j:=Q_j)_1^m] \in {\cal I}(U) \fx
{\cal I}(T)$ and

$M_2[(x_i:=P_i)_1^n,(z_k:=R_k)_1^l] \in {\cal I}(U)$,

then $(M_1 M_2)[(x_i:=P_i)_1^n,(y_j:=Q_j)_1^m,(z_k:=R_k)_1^l] =$

$M_1[(x_i:=P_i)_1^n,(y_j:=Q_j)_1^m] \; M_2[(x_i:=P_i)_1^n,(z_k:=R_k)_1^l]
\in {\cal I}(T)$.

\item
Let $\F{M: \<(x_i:U_i)_n \v V_1\> \;\;\; M : \<(x_i:U_i)_n \v
V_2\>} {M : \<(x_i:U_i)_n \v V_1 \sqcap V_2\>}$.
By IH, $M[(x_i:=N_i)_1^n] \in {\cal I}(V_1)$ and $M[(x_i:=N_i)_1^n]
\in {\cal I}(V_2)$. Hence, $M[(x_i:=N_i)_1^n] \in {\cal I}(V_1
\sqcap V_2)$.

\item
Let $\F{M : \Phi \;\;\; \Phi \sqsubseteq \Phi'} {M : \Phi'}$ where
$\phi' = \<(x_i:U_i)_n \v U\>$.

By lemma \ref{Phisub}.\ref{Phisubthree} and
\ref{Phisub}.\ref{Phisubtwo}, $\Phi = \<(x_i : U'_i)_n \v U'\>$,
$\forall~1 \leq i \leq n$, $U_i \sqsubseteq U'_i$ and $U'
\sqsubseteq U$. By lemma \ref{combinedlem}.\ref{intsub}, $N_i \in {\cal I}(U'_i)$,
then, by IH, $M[(x_i:=N_i)_1^n] \in {\cal I}(U')$ and, by lemma
\ref{combinedlem}.\ref{intsub}, $M[(x_i:=N_i)_1^n] \in {\cal I}(U)$.
\end{itemize}
\end{proof}

Roughly speaking, completeness of the semantics amounts to saying that 
if $M$ is in the meaning of type $U$ (i.e., $M$ is in ${\cal I}(U)$ for any interpretation ${\cal I}$) then $M$ has type $U$.  In order to show completeness, we define a special interpretation function ${\mathbb I}$ through the typing relation $\v$ in such a way that, if $M \in {\mathbb I}(U)$ then $M$ can be shown to have type $U$. This is done in the next definition and lemma.  

\begin{definition}
\label{finalintfunc}
\begin{enumerate}
\item \label{finalintfuncone} For every  $U \in {\mathbb U}$, let
an infinite subset ${\mathbb V}_U$ of ${\cal V}$ such that:\\
$\bullet$ If $U \neq V$, then ${\mathbb V}_U
\cap {\mathbb V}_V = \emptyset$. \hspace{0.5in}
$\bullet$
$\bigcup_{U \in {\mathbb U}} {\mathbb V}_U = {\cal V}$.
\item\label{finalintfunctwo} We denote ${\mathbb G}=\{(x:U)$ / $U$ is a type and $x \in
{\mathbb V}_U \}$. Note that since ${\mathbb G}$ is infinite,  ${\mathbb G}$ is not a type
environment.
\item\label{finalintfuncthree} Let $M \in{\cal M}$ and $U \in {\mathbb U}$.  We write
 $M : \< {\mathbb G} \v U\>$ if there is a type
environment $\G \subset {\mathbb G}$ such that $M : \< \G \v U\>$.
\item \label{finalintfuncfour}
Let ${\mathbb I}: {\cal A} \mapsto {\cal P}({\cal M})$ be the 
function defined by:

$\forall~a \in {\cal A}$, ${\mathbb I}(a) = \{M \in {\cal M}$ / $M
: \< {\mathbb G} \v a\> \}$.
\end{enumerate}
\end{definition}
\begin{remark}
\label{useremark}
Note that in Definition~\ref{finalintfunc},  we have associated to each $U \in {\mathbb U}$, an infinite set of variables ${\mathbb V}_U$ in such a way that no variable is used in two different types, and each variable of ${\cal V}$ is associated to a type. Obviously,  as long as these conditions are satisfied, we have the liberty of dividing the set  ${\cal V}$ as we wish.  We will practice this liberty in the proof of theorem~\ref{comple4}.
\end{remark}

\begin{lemma}\label{comp}
\begin{enumerate}
\item\label{compone'}
If $\G, \G' \subset {\mathbb G}$ and $dom(\G) = dom(\G')$, then
$\G = \G'$.
\item\label{compone''}
If $\G, \G' \subset {\mathbb G}$, then $\G \sqcap \G' = \G \cup
\G' \subset{\mathbb G}$.
\item\label{compone}
 ${\mathbb I}$ is a $\be$-interpretation.  I.e., $\forall~a \in {\cal A}$, ${\mathbb I}(a)$ is $\be$-saturated.\\
Hence, ${\mathbb I}$ is an $f$-interpretation. \\
Furthermore, we extend  ${\mathbb I}$ to ${\mathbb U}$ as in Definition~\ref{defint}.\ref{definttwo}.
\item\label{comptwo}
If $U \in {\mathbb U}$, then ${\mathbb I}(U) \not = \emptyset$ and 
${\mathbb I}(U) = \{M \in {\cal M}$ /
$M : \<{\mathbb G} \v U\> \}$.
\end{enumerate}
\end{lemma}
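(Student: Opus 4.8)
The plan is to prove the four parts in order, since each leans on the previous ones. For \ref{compone'}, suppose $\G,\G'\subset{\mathbb G}$ with $dom(\G)=dom(\G')$; take any $x$ in that common domain, with $x:U\in\G$ and $x:V\in\G'$. Since $x\in{\mathbb V}_U$ and $x\in{\mathbb V}_V$, the disjointness condition in Definition~\ref{finalintfunc}.\ref{finalintfuncone} forces $U=V$, so the two environments agree pointwise, hence are equal as sets. Part \ref{compone''} is then almost immediate: $\G\sqcap\G'$ has domain $dom(\G)\cup dom(\G')$, and on the overlap the two environments assign the same type (by the same ${\mathbb V}_U$-disjointness argument applied to the common variables), so the $\sqcap$-combination on a shared variable $x:U$ gives $U\sqcap U=U$; hence $\G\sqcap\G'$ is just the set-theoretic union, and each entry still comes from ${\mathbb G}$, so $\G\sqcap\G'\subset{\mathbb G}$.

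For \ref{compone}, I must check that ${\mathbb I}(a)=\{M\;/\;M:\<{\mathbb G}\v a\>\}$ is $\be$-saturated. So suppose $M\rhd_\be^* N$ and $N\in{\mathbb I}(a)$, i.e.\ $N:\<\G\v a\>$ for some $\G\subset{\mathbb G}$. By subject expansion (Corollary~\ref{finalbetaexp}), $M:\<\G{\u^M}\v a\>$. The point is that $\G{\u^M}$ is still a subset of ${\mathbb G}$: it extends $\G$ only by entries $y:\o$ for $y\in FV(M)\setminus dom(\G)$, and each such $y$ lies in ${\mathbb V}_{\o}$ or some ${\mathbb V}_U$ — wait, this is the delicate point. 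The entry we add is $y:\o$, and we need $y\in{\mathbb V}_\o$; but $y$ was an arbitrary free variable of $M$, which need not lie in ${\mathbb V}_\o$. The resolution is that $\G{\u^M}$ need not literally be used: by Lemma~\ref{structyping}.\ref{structypingtwo} one can retype $M$ in the environment that assigns to each free variable $y$ of $M$ the (unique) type $U_y$ with $y\in{\mathbb V}_{U_y}$, since $dom$ of that environment equals $FV(M)$ and it lies in ${\mathbb G}$; one shows $M:\<\Gamma''\v a\>$ for that $\Gamma''\subset{\mathbb G}$ by combining the typing $M:\<\G{\u^M}\v a\>$ with a $\sqsubseteq$-step, using $\o\sqcap U_y=U_y$, i.e.\ $\Gamma''\sqsubseteq\G{\u^M}$. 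So $M\in{\mathbb I}(a)$. Then ${\mathbb I}$ being an $f$-interpretation follows by Lemma~\ref{beintiswint}, and the extension to ${\mathbb U}$ is by definition.

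For \ref{comptwo} I would argue by induction on $U\in{\mathbb U}$, proving both ${\mathbb I}(U)\neq\emptyset$ and ${\mathbb I}(U)=\{M\;/\;M:\<{\mathbb G}\v U\>\}$ simultaneously. The base case $U=a$ is Definition~\ref{finalintfunc}.\ref{finalintfuncfour}, and nonemptiness holds because any variable $x\in{\mathbb V}_a$ satisfies $x:\<(x:a)\v a\>$ with $(x:a)\subset{\mathbb G}$ (using rule $ax'$, Lemma~\ref{newrules}.\ref{newrulestwo}). For $U=\o$ both sides are ${\cal M}$ (the right side by Lemma~\ref{structyping}.\ref{structypingtwo}, retyping in an environment drawn from ${\mathbb G}$ as above). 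For $U=U_1\sqcap U_2$, use ${\mathbb I}(U_1\sqcap U_2)={\mathbb I}(U_1)\cap{\mathbb I}(U_2)$, the IH, rule $\sqcap_i$ together with \ref{compone''} (to merge the two ${\mathbb G}$-environments into one), and rule $\sqcap_e$/$\sqsubseteq$ for the converse; nonemptiness follows from the IH plus $\sqcap_i$. The interesting case is $U=V\f T$, where ${\mathbb I}(V\f T)={\mathbb I}(V)\fx{\mathbb I}(T)$. For "$\supseteq$": given $M:\<\G\v V\f T\>$ with $\G\subset{\mathbb G}$ and any $N\in{\mathbb I}(V)$, by IH $N:\<\D\v V\>$ with $\D\subset{\mathbb G}$; by \ref{compone''} and $\sqsubseteq$ one can put both in $\G\cup\D\subset{\mathbb G}$, so by $\f_e$, $MN:\<\G\cup\D\v T\>$, hence $MN\in{\mathbb I}(T)$ by IH. For "$\subseteq$": pick a fresh variable $x\in{\mathbb V}_V$ not occurring in $M$ (possible since ${\mathbb V}_V$ is infinite); then $x\in{\mathbb I}(V)$ by IH and the base/nonemptiness reasoning, so $Mx\in{\mathbb I}(T)$, giving $Mx:\<\G\v T\>$ for some $\G\subset{\mathbb G}$ by IH; since $x\notin FV(M)$ we may assume $x:V\in\G$ (if $x\notin dom(\G)$ enlarge $\G$ by $x:\o$ and use $\o\sqcap V$... ), and then Lemma~\ref{newgen}.\ref{newgenfour} gives $M:\<\G\setminus\{x:V\}\v V\f T\>$ — modulo the usual retyping-in-${\mathbb G}$ adjustment — so $M\in{\mathbb I}(V\f T)$; nonemptiness of ${\mathbb I}(V\f T)$ follows since $\l x.x\in{\mathbb I}(V\f T)$ when $V\f T$ is such that... more simply, $\l x.N\in{\mathbb I}(V\f T)$ for $x\in{\mathbb V}_V$ and $N\in{\mathbb I}(T)$ by Lemma~\ref{combinedlem}.\ref{neededcombine}, $T$ being nonempty by IH.

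The main obstacle, and the step I would be most careful about, is the bookkeeping around ${\mathbb G}$: every time subject reduction/expansion or rule $\f_e$ forces us into an environment $\G{\u^M}$ or $\G_1\sqcap\G_2$, we must check it can be re-expressed as (or weakened from) an honest subset of ${\mathbb G}$, which repeatedly uses the disjointness and covering properties of the family $({\mathbb V}_U)_U$ together with $\o\sqcap U=U$ and the $\sqsubseteq$ rule. Getting the "$\subseteq$" direction of the arrow case exactly right — in particular handling whether the fresh $x$ already appears in the environment witnessing $Mx:\<{\mathbb G}\v T\>$ — is where the argument is most finicky.
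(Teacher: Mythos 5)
Your proposal is correct and follows essentially the same route as the paper: disjointness of the sets ${\mathbb V}_U$ for parts 1--2, subject expansion followed by a $\sqsubseteq$-step replacing the freshly added $\o$-entries by the ${\mathbb G}$-prescribed types for $\be$-saturation, and induction on $U$ using relevance, the derived rule $ax'$, part 2 and lemma~\ref{newgen}.\ref{newgenfour} for the arrow case. The only blemishes are cosmetic: the appeal to lemma~\ref{structyping}.\ref{structypingtwo} in part 3 is not the step you actually use (the $\sqsubseteq$-argument you give right after is), and in the arrow case the subcase $x \notin dom(\G)$ cannot occur, since $dom(\G)=FV(M\,x)\ni x$ by lemma~\ref{structyping}.\ref{structypingone} and $x\in{\mathbb V}_V$ with $\G\subset{\mathbb G}$ forces $x:V\in\G$.
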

\begin{proof}
\begin{itemize}
\item[\ref{compone'}.]
Let $(x:U) \in \G$ and $(x:U') \in \G'$.  Hence, $x \in {\mathbb V}_U$ and $x \in {\mathbb V}_{U'}$
and so, $U = U'$ (otherwise, ${\mathbb V}_U \cap {\mathbb V}_{U'} = \emptyset$).\item[\ref{compone''}.] Let $\G = (x_i:U_i)_n, (y_j:V_j)_m$ and
$\G' = (x_i:U'_i)_n, (z_k:W_k)_l$ where $y_j \not = z_k$ for all $1\leq j \leq m$ and $1\leq k \leq l$.  Since
$(x_i:U_i)_n\subset {\mathbb G}$ and $(x_i:U'_i)_n\subset {\mathbb G}$, by
\ref{compone'}, $U_i = U'_i$ for all $1 \leq i \leq n$.  Hence, $\G \sqcap \G' = \G \cup \G' \subset{\mathbb G}$. 
\item[\ref{compone}.] Let $a \in {\cal A}$, $M  \in {\cal M}$, $M \rhd_\be^*
N$ and $N \in {\mathbb I}(a)$.  Then $N: \<\G \v a\>$ where $\G
\subset {\mathbb G}$. Let $FV(M) \setminus dom(\G) = \{x_1,...,x_n
\}$ and  $\forall~1 \leq i \leq n$, take $U_i$  such that $x_i \in
{\mathbb V}_{U_i}$.  Then  $\D = \G,(x_i:U_i)_n \subset {\mathbb
G}$ and  $\G {\u^M} = \G , (x_i : \o)_n$. By
corollary~\ref{finalbetaexp}, $M : \<\G {\u^M} \v a\>$ and, by
lemma~\ref{Phisub}.\ref{Phisubtwo}, $\D \sqsubseteq \G {\u^M}$.
Hence, by rule $\sqsubseteq$,  $M : \<\D \v a\>$.
Thus, $M \in {\mathbb I}(a)$. Hence ${\mathbb I}(a)$ is $\be$-saturated and 
so,  ${\mathbb I}$ is a $\be$-interpretation.
 Finally, by lemma~\ref{beintiswint},  ${\mathbb I}$ is an $f$-interpretation.
\item[\ref{comptwo}.]
The proof of  ${\mathbb I}(U) \not = \emptyset$ is as follows:
let $x \in {\mathbb V}_{U}\not = \emptyset$.  Then, $x:U \in  {\mathbb G}$ and since
$x:\<(x:U)\v U\>$ then $x\in{\mathbb I}(U)$.  

Now we do the second part by induction on $U$.
\begin{itemize}
\item $U = a$: By definition of ${\mathbb I}$.
\item $U = \o$:
By definition,
 ${\mathbb I}(\o) = {\cal M}$. So,
$\{M \in {\cal M}$ / $M : \<{\mathbb G} \v \o\> \} \subseteq
{\mathbb I}(\o)$.\\
Conversely, let $M \in {\mathbb I}(\o)$ where $FV(M) = \{x_1,...,x_n\}$. We
have $M : \<(x_i:\o)_n \v \o\>$. $\forall~1 \leq i \leq n$, take
$U_i$ such that $x_i \in {\mathbb V}_{U_i}$.  Then  $\G =
(x_i:U_i)_n \subset {\mathbb G}$. By lemma~\ref{structyping}.\ref{structypingtwo}, $M :
\<\G \v \o\>$. Hence $M : \<{\mathbb G} \v \o\>$. Thus, ${\mathbb
I}(\o) \subseteq \{M \in {\cal M}$ / $M : \< {\mathbb G} \v \o\>
\}$.

We deduce ${\mathbb I}(\o) = \{M \in {\cal M}$ / $M : \< {\mathbb
G} \v \o\> \}$.

\item $U = U_1 \sqcap U_2$:
By IH, ${\mathbb I}(U_1 \sqcap U_2) = {\mathbb I}(U_1) \cap
{\mathbb I}(U_2) = $

$\{M \in {\cal M}$ / $M : \< {\mathbb G} \v U_1\> \} \cap \{M \in
{\cal M}$ / $M : \< {\mathbb G} \v U_2\> \}$.
\begin{itemize}
\item If $M : \< {\mathbb G} \v U_1\>$
and $M : \< {\mathbb G} \v U_2\>$, then $M : \< \G_1 \v U_1\>$ and
$M : \< \G_2 \v U_1\>$ where $\G_1,\G_2 \subset {\mathbb G}$. By
lemma~\ref{structyping}.\ref{structypingone}, $dom(\G_1) =
dom(\G_2) = FV(M)$.  By lemma~\ref{newrules}.\ref{newrulesone}, $M
: \< \G_1 \sqcap \G_2 \v  U_1 \sqcap U_2 \>$. Since $\G_1,\G_2
\subset {\mathbb G}$, then, by~\ref{compone'}, $\G_1 = \G_2$ and
$\G_1 \sqcap \G_2 = \G_1 \subset {\mathbb G}$. Thus $M : \<
{\mathbb G} \v U_1 \sqcap U_2\>$.
\item If $M : \< {\mathbb G} \v U_1 \sqcap
U_2\>$, then $M : \< \G \v  U_1 \sqcap U_2\>$ where $\G \subset
{\mathbb G}$. By $\sqsubseteq$, $M : \< \G \v U_1\>$ and $M : \<
\G \v  U_2\>$, then $M : \< {\mathbb G} \v U_1\>$ and $M : \<
{\mathbb G} \v U_2\>$.
\end{itemize}
We deduce ${\mathbb I}(U_1 \sqcap U_2) = \{M \in {\cal M}$ / $M :
\< {\mathbb G} \v U_1 \sqcap U_2\> \}$.

\item $U = V \f T$: Then ${\mathbb I}(V \f T) = {\mathbb I}(V) \fx {\mathbb I}(T)$.
By IH, \\${\mathbb I}(V) = \{M \in {\cal M}$ / $M : \< {\mathbb G}
\v V\> \}$ and $ {\mathbb I}(T) = \{M \in {\cal M}$ / $M : \<
{\mathbb G} \v T\>\}$.
\begin{itemize}
\item Let $M \in {\mathbb I}(V) \fx {\mathbb I}(T)$ and $x \in {\mathbb V}_V$
such that $x \not \in FV(M)$. By rule $ax'$ (see
lemma~\ref{newrules}.\ref{newrulestwo}), $x : \<(x : V) \v V\>$.
Since $(x:V) \subset {\mathbb G}$, then $x : \<{\mathbb G} \v
V\>$. By IH, $x \in {\mathbb I}(V)$. Hence $M x \in {\mathbb
I}(T)$ and so $M  x : \< \G \v T\>$ where $\G \subset {\mathbb
G}$. Since $x \not \in FV(M)$, then $\G = \D , x : V$ and $\D
\subset {\mathbb G}$. By lemma~\ref{newgen}.\ref{newgenfour}, we
deduce that $M : \< \D \v V \f T\>$.
\item Let $M,N \in {\cal M}$ such that $M : \<{\mathbb G} \v V \f T\> $
and $N: \< {\mathbb G} \v V\> $. We have $M : \< \G_1 \v V \f T\>$
and $N : \< \G_2 \v V\>$ where $\G_1,\G_2 \subset {\mathbb G}$.
Thus $M \; N : \< \G_1 \sqcap \G_2 \v T\>$.  Since, by
lemma~\ref{comp}.\ref{compone''}$, \G_1 \sqcap \G_2 \subset
{\mathbb G}$. Therefore $M  N : \< {\mathbb G} \v T\>$.
\end{itemize}

We deduce ${\mathbb I}(V \f T) = \{M \in {\cal M}$ / $M : \<
{\mathbb G} \v V \f T\> \}$.
\end{itemize}
\end{itemize}
\end{proof}

Now, the ${\mathbb I}$ of definition~\ref{finalintfunc}
will be used to show the completeness of the semantics.
\begin{theorem}[Completeness]\label{completude}
Let $r \in \{f, \be\}$.  
Let $U_1,...,U_n,U \in {\mathbb U}$  and $M \in {\cal M}$ such
that $FV(M) = \{x_1,...,x_n\}$. If $\forall$ $r$-interpretation ${\cal
I}$ and $\forall~N_i \in {\cal I}(U_i)$ ($1 \leq i \leq n$),
$M[(x_i := N_i)_1^n] \in {\cal I}(U)$, then $M : \< (x_i : U_i)_n
\v U\>$.
\end{theorem}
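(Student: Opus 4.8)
The plan is to instantiate the hypothesis with the special interpretation ${\mathbb I}$ constructed in Definition~\ref{finalintfunc} and exploited in Lemma~\ref{comp}. By Lemma~\ref{comp}.\ref{compone}, ${\mathbb I}$ is both a $\be$-interpretation and an $f$-interpretation, so it is a legitimate choice of ${\cal I}$ for either value of $r$. The key point is that ${\mathbb I}$ is ``syntactic'': membership in ${\mathbb I}(U)$ is, by Lemma~\ref{comp}.\ref{comptwo}, exactly typability $M : \<{\mathbb G} \v U\>$. So the strategy is: feed ${\mathbb I}$ suitable witnesses $N_i \in {\mathbb I}(U_i)$ to the hypothesis, conclude $M[(x_i := N_i)_1^n] \in {\mathbb I}(U)$, unfold this to a typing judgement, and then peel the substitution back off to recover $M : \<(x_i : U_i)_n \v U\>$.

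The natural choice of witnesses is $N_i = x_i$ itself, provided $x_i \in {\mathbb V}_{U_i}$. This need not hold for the given free variables of $M$, but by Remark~\ref{useremark} we are free to choose the partition $\{{\mathbb V}_U\}_{U \in {\mathbb U}}$ of ${\cal V}$ however we like, as long as the two conditions of Definition~\ref{finalintfunc}.\ref{finalintfuncone} hold; so I would simply arrange the partition so that $x_i \in {\mathbb V}_{U_i}$ for each $1 \le i \le n$ (the $U_i$ may repeat, but that is harmless — we just need each ${\mathbb V}_{U_i}$ to contain the corresponding $x_i$, and each ${\mathbb V}_U$ to remain infinite, which is possible since ${\cal V}$ is infinite and only finitely many constraints are imposed). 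With this partition, $x_i : U_i \in {\mathbb G}$, and by rule $ax'$ (Lemma~\ref{newrules}.\ref{newrulestwo}) we have $x_i : \<(x_i : U_i) \v U_i\>$, hence $x_i \in {\mathbb I}(U_i)$ by Lemma~\ref{comp}.\ref{comptwo}.

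Now apply the hypothesis with this ${\cal I} = {\mathbb I}$ and these $N_i = x_i$: since $M[(x_i := x_i)_1^n] = M$, we obtain $M \in {\mathbb I}(U)$, i.e.\ by Lemma~\ref{comp}.\ref{comptwo} there is a type environment $\G \subset {\mathbb G}$ with $M : \<\G \v U\>$. By Lemma~\ref{structyping}.\ref{structypingone}, $dom(\G) = FV(M) = \{x_1, \dots, x_n\}$, and since $\G \subset {\mathbb G}$ each binding in $\G$ must be of the form $x_i : W_i$ with $x_i \in {\mathbb V}_{W_i}$; but $x_i$ also lies in ${\mathbb V}_{U_i}$, and the ${\mathbb V}$-sets for distinct types are disjoint, forcing $W_i = U_i$. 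Hence $\G = (x_i : U_i)_n$ and $M : \<(x_i : U_i)_n \v U\>$, as required.

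The only delicate step is the re-choice of the partition in Remark~\ref{useremark}: one must check that ${\mathbb I}$ and all the properties established in Lemma~\ref{comp} (in particular that ${\mathbb I}$ is an $r$-interpretation, and the characterisation of ${\mathbb I}(U)$) do not depend on which admissible partition was fixed — which is clear, since the proof of Lemma~\ref{comp} only used the two defining conditions of Definition~\ref{finalintfunc}.\ref{finalintfuncone}. Once that is granted, the rest is a direct unfolding of definitions with no real computation; so I expect no serious obstacle, the ``hard work'' having already been absorbed into the construction of ${\mathbb I}$ and Lemma~\ref{comp}.
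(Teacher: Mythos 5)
Your proposal is correct, but it takes a genuinely different route from the paper's proof. The paper never re-chooses the partition of ${\cal V}$ inside this theorem: instead it splits on the shape of $U$ (the case split is forced by strictness, since $U_n \f U$ is only a well-formed type when $U \in {\mathbb T}$), closes the term by forming $\l x_1.\dots\l x_n.M$, uses lemma~\ref{combinedlem}.\ref{neededcombine} to place this closed term in ${\cal I}(U_1 \f (\dots \f (U_n \f U)\dots))$ for every $r$-interpretation ${\cal I}$, specialises to ${\mathbb I}$ to get a typing $\l x_1.\dots\l x_n.M : \<() \v U_1 \f (\dots \f (U_n \f U)\dots)\>$ via lemma~\ref{comp}.\ref{comptwo}, then applies it to $x_1,\dots,x_n$ typed by $ax'$ and invokes subject reduction (corollary~\ref{finalbetaeta}) on $(\l x_1.\dots\l x_n.M)x_1\dots x_n \rb^* M$; intersections are handled by $\sqcap_i$ and $U=\o$ separately. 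You instead exploit the freedom of remark~\ref{useremark} (which the paper itself only exercises later, in theorem~\ref{comple4}) to arrange $x_i \in {\mathbb V}_{U_i}$, feed the identity substitution $N_i = x_i$ into the hypothesis, and read $\G = (x_i:U_i)_n$ directly off the disjointness of the ${\mathbb V}$-sets together with lemma~\ref{structyping}.\ref{structypingone}. Your argument is shorter: it needs no case analysis on $U$, no lemma~\ref{combinedlem}.\ref{neededcombine}, and no appeal to subject reduction; its only extra obligation is the one you already discharge, namely that lemma~\ref{comp} holds for any admissible partition, and that a partition with $x_i \in {\mathbb V}_{U_i}$ exists (finitely many constraints on a denumerable ${\cal V}$, consistent even when the $U_i$ repeat). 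The paper's version, conversely, works with a single fixed ${\mathbb I}$ throughout and showcases the interplay with subject reduction, at the price of the three-way case split.
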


\begin{proof}
We distinguish three cases:
\begin{itemize}
\item If $U = \o$, then  $M : \< (x_i : \o)_n \v
\o\>$. Thus, by lemma~\ref{structyping}.\ref{structypingtwo}, $M : \< (x_i : U_i)_n \v
\o\>$.
\item If $U \in {\mathbb T}$, then, let $V = U_1 \f (U_2 \f (...\f
(U_n \f U)...))$. By hypothesis and lemma~\ref{combinedlem}.\ref{neededcombine},
 $\forall$
$r$-interpretation ${\cal I}$, $\l x_1....\l x_n .M \in {\cal I}(V)$.
Hence, $\l x_1....\l x_n .M \in {\mathbb I}(V)$ where ${\mathbb
I}$ is the interpretation of
definition~\ref{finalintfunc}.\ref{finalintfuncfour}. By
lemma~\ref{comp}.\ref{comptwo},  $\l x_1....\l x_n .M: \< \G \v
V\>$ where $\G \subset {\mathbb G}$ and, since $\l x_1....\l x_n.
M$ is closed, $\G = ()$. By rule $ax'$, $\forall~1 \leq i \leq n$,
$x_i : \<x_i : U_i \v U_i\>$, by $n$ applications of $\f_e$ we
deduce $(\l x_1....\l x_n.M) x_1...x_n: \< (x_i : U_i)_n \v U\>$.
Since $(\l x_1....\l x_n.M) x_1...x_n \rb^* M$, then by
corollary~\ref{finalbetaeta}, $M : \< (x_i : U_i)_n \v U\>$.
\item If $U = \sqcap_{j=1}^m T_j$, then, by hypothesis, $\forall$
$r$-interpretation ${\cal I}$, $\forall~N_i \in {\cal I}(U_i)$ ($1 \leq i \leq
n$), and $\forall~1 \leq j \leq m$, $M[(x_i := N_i)_1^n] \in {\cal
I}(T_j)$. By the previous case, $\forall~1 \leq j \leq m$, $M : \<
(x_i : U_i)_n \v T_j\>$.  By $m-1$ applications of $\sqcap_i$ we
deduce $M : \< (x_i : U_i)_n \v U\>$.
\end{itemize}
\end{proof}
\section{The meaning of types}
\label{meantypes}
Obviously the meaning of a type $U$ should be based on the intersection of 
all the interpretations of $U$.   However, since we have been using two different kinds of interpretations ($\be$- and $f$-interpretations), we give two definitions for  the meaning of a type.  We will show that these two definitions are equivalent.

\begin{definition}
\label{meaningopen}
Let $r \in \{f, \beta\}$. 
We define the meaning $[U]_r$ of $U\in {\mathbb U}$ by:

$$[U]_r=  \bigcap_{{\cal I} \;\;\; r-interpretation} {\cal I}(U)$$
\end{definition}
The next theorem shows that the meaning $[U]$ of $U$ is the set of terms typable by $U$ in a special environment and that $[U]$ is stable by $\beta$-reduction and $\beta$-expansion.
\begin{theorem}\label{comple4}
Let $r \in \{f, \beta\}$ and  $U\in{\mathbb U}$.
\begin{enumerate}
\item  \label{comple4one} $[U]_r = \{M \in {\cal M}$ / $M : \< env_\o^M \v  U\> \}$.
\item  \label{comple4two} $[U]_r$ is stable by $\beta$-reduction.  I.e., if $M \in [U]_r$ and $M \rb^* N$,
then $N \in [U]_r$.
\item  \label{comple4three} $[U]_r$ is stable by $\beta$-expansion.
I.e., if $M \in [U]_r$, $N \rb^* M$, then $N \in [U]_r$.
\item \label{comple4one'} $[U]_r = \{M \in {\cal M}$ / $M \rb^* N$ and $N : \< env_\o^N \v  U\> \}$.
\end{enumerate}
\end{theorem}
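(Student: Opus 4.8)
\textbf{Proof proposal for Theorem~\ref{comple4}.}

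The plan is to prove the four parts in the order \ref{comple4one}, \ref{comple4two}, \ref{comple4three}, \ref{comple4one'}, since each later part leans on the earlier ones.

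For part~\ref{comple4one}, the inclusion $\{M \in {\cal M} \; / \; M : \< env_\o^M \v U\>\} \subseteq [U]_r$ is exactly soundness (Theorem~\ref{adeq}): if $M : \<(x_i:\o)_n \v U\>$ with $FV(M)=\{x_1,\dots,x_n\}$, then for any $r$-interpretation ${\cal I}$, taking $N_i = x_i \in {\cal M} = {\cal I}(\o)$ gives $M = M[(x_i:=x_i)_1^n] \in {\cal I}(U)$; as ${\cal I}$ was arbitrary, $M \in [U]_r$. For the converse, suppose $M \in [U]_r$ with $FV(M)=\{x_1,\dots,x_n\}$. The idea is to exploit the freedom noted in Remark~\ref{useremark}: choose the partition of ${\cal V}$ so that each $x_i \in {\mathbb V}_\o$, i.e. $(x_i:\o) \in {\mathbb G}$. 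Then $M \in {\mathbb I}(U)$ (since ${\mathbb I}$ is an $r$-interpretation by Lemma~\ref{comp}.\ref{compone}), so by Lemma~\ref{comp}.\ref{comptwo}, $M : \<\G \v U\>$ for some $\G \subset {\mathbb G}$; by Lemma~\ref{structyping}.\ref{structypingone}, $dom(\G) = FV(M) = \{x_1,\dots,x_n\}$, and since each $x_i \in {\mathbb V}_\o$ and types in ${\mathbb G}$ are determined by the variable (Lemma~\ref{comp}.\ref{compone'}), we get $\G = (x_i:\o)_n = env_\o^M$, as desired. One should remark that for $r=\beta$ versus $r=f$ the right-hand set is the same; this already shows $[U]_\beta = [U]_f$ in passing (or one defers that to the section's main equivalence theorem).

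Parts~\ref{comple4two} and~\ref{comple4three} are then immediate from part~\ref{comple4one} together with subject reduction and subject expansion for $\beta$. For~\ref{comple4two}: if $M \in [U]_r$ and $M \rb^* N$, then $M : \<env_\o^M \v U\>$, so by Corollary~\ref{finalbetaeta}, $N : \<env_\o^M \r_N \v U\>$; by Theorem~\ref{conf}.\ref{confone}, $FV(N) \subseteq FV(M)$, and by Lemma~\ref{restlem}.\ref{restlemone}, $env_\o^M \r_N = env_\o^N$, so $N : \<env_\o^N \v U\>$, hence $N \in [U]_r$. For~\ref{comple4three}: if $M \in [U]_r$ and $N \rb^* M$, then $M : \<env_\o^M \v U\>$, so by Corollary~\ref{finalbetaexp}, $N : \<(env_\o^M)\u^N \v U\>$; since $FV(M) \subseteq FV(N)$ we have $(env_\o^M)\u^N = env_\o^N$, so $N : \<env_\o^N \v U\>$, hence $N \in [U]_r$. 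Finally, part~\ref{comple4one'} combines the previous three: if $M \rb^* N$ with $N : \<env_\o^N \v U\>$, then $N \in [U]_r$ by~\ref{comple4one} and $M \in [U]_r$ by stability under expansion~\ref{comple4three}; conversely, if $M \in [U]_r$, then taking $N = M$ and using $M : \<env_\o^M \v U\>$ from~\ref{comple4one} witnesses the right-hand side.

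The only genuinely delicate point is the converse direction of part~\ref{comple4one}: one must be careful that the construction of ${\mathbb I}$ in Definition~\ref{finalintfunc} permits the specific partition where all the relevant free variables $x_1,\dots,x_n$ lie in ${\mathbb V}_\o$ — but since there are only finitely many of them and ${\mathbb V}_\o$ is infinite, and the other conditions on the partition (disjointness, covering ${\cal V}$) are easily met by distributing the remaining variables arbitrarily, this is exactly the liberty flagged in Remark~\ref{useremark}. Everything else is a routine chaining of subject reduction/expansion with the free-variable bookkeeping lemmas (Theorem~\ref{conf}.\ref{confone}, Lemma~\ref{restlem}.\ref{restlemone}).
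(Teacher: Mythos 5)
Your proposal is correct and follows essentially the same route as the paper: soundness for the forward inclusion of part 1, the special interpretation ${\mathbb I}$ with the partition chosen so that $FV(M)\subset{\mathbb V}_\o$ (the liberty of Remark~\ref{useremark}) for the converse, and then parts 2--4 by chaining part 1 with subject reduction, subject expansion and the free-variable lemmas exactly as in the paper's proof.
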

\begin{proof}
\begin{itemize}
\item[\ref{comple4one}.]
Let $M \in {\cal M}$ such that $M : \< env_\o^M \v  U\>$. Let ${\cal I}$ be an $r$-interpretation and take  $FV(M) = dom(env_\o^M) = \{x_1, x_2, \dots, x_n\}$.  By theorem~\ref{adeq}, since $\forall 1 \leq i \leq n$, $x_i \in {\cal I}(\o) = {\cal M}$, then 
$M = M[(x:=x_i)_1^n] \in {\cal I}(U)$.  Hence, $M \in [U]_r$.\\
Conversely, let $M \in [U]_r$.  Take the interpretation ${\mathbb I}$ given in Definition~\ref{finalintfunc}
such that (recall remark~\ref{useremark}) $FV(M)\subset {\mathbb V}_\o$.
Since $M \in  {\mathbb I}(U)$ then 
$M : \< \G \v U \>$ where $\G \subseteq {\mathbb G}$.   But  $FV(M)\subset {\mathbb V}_\o$ and 
by lemma~\ref{structyping}.\ref{structypingone}, $FV(M) = dom(\G)$.
Hence $\G = env_\o^M$.\\
We conclude that 
 $[U]_r = \{M \in {\cal M}$ / $M : \< env_\o^M \v  U\> \}$.
\item[\ref{comple4two}.]
Let $M \in [U]_r$ such that $M \rb^* N$. By~\ref{comple4one}, $M :
\< env_\o^M \v U\>$. By
subject reduction for
$\beta$ corollary~\ref{finalbetaeta}, $N : \< (env_\o^M)\r_{N} \v
U\>$. Since by theorem~\ref{conf}.\ref{confone}, $FV(N) \subseteq FV(M)$ then $(env_\o^M)\r_{N} = env_\o^{N}$.
Thus by \ref{comple4one}, $N \in [U]_r$.
\item[\ref{comple4three}.]
Let $M \in [U]_r$ such that $N \rb^* M$. By~\ref{comple4one}, $M
: \< env_\o^M \v U\>$.  By subject expansion for 
$\beta$ corollary~\ref{finalbetaexp},
$N
: \< (env_\o^M)\u^N \v U\>$. Since by theorem~\ref{conf}.\ref{confone}, $FV(M) \subseteq FV(N)$ then $(env_\o^M)\u^{N} = env_\o^{N}$.
Thus by \ref{comple4one}, $N \in [U]_r$.
\item[\ref{comple4one'}.] 
By~\ref{comple4one}, $[U]_r \subseteq \{M \in {\cal M}$ / $M \rb^* N$ and $N : \< env_\o^N \v  U\> \}$. Conversely, let $M \rb^* N$ and $N : \< env_\o^N \v  U\>$. By~\ref{comple4one}, $N \in [U]_r$.  Hence, by~\ref{comple4three}, $M \in [U]_r$.
\end{itemize}
\end{proof}
\begin{corollary} Let $U \in {\mathbb U}$.  We have that 
$[U]_f = [U]_\be$.
\end{corollary}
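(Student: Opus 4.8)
The plan is to show the two meanings coincide by using the characterisation of $[U]_r$ established in Theorem~\ref{comple4}.\ref{comple4one}, namely that for each $r \in \{f,\beta\}$ we have $[U]_r = \{M \in {\cal M} \mid M : \langle env_\o^M \v U\rangle\}$. The crucial observation is that the right-hand side of this equation does \emph{not} depend on $r$ at all: it is purely a statement about the typing judgement $\v$, which is a single fixed relation. Hence the plan is simply to write
$$[U]_f = \{M \in {\cal M} \mid M : \langle env_\o^M \v U\rangle\} = [U]_\be,$$
invoking Theorem~\ref{comple4}.\ref{comple4one} once with $r=f$ and once with $r=\beta$.

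In slightly more detail, I would first note that Theorem~\ref{comple4}.\ref{comple4one} has been proved for both values of $r$ (the statement is ``Let $r \in \{f,\beta\}$'', and the proof treats $r$ generically, using only that every $r$-interpretation assigns ${\cal M}$ to $\o$ and that soundness Theorem~\ref{adeq} and the completeness apparatus hold for both notions of saturation — all of which are available by Lemma~\ref{fx+}.\ref{fx+one}, Lemma~\ref{beintiswint}, Lemma~\ref{comp}.\ref{compone}, etc.). Therefore both $[U]_f$ and $[U]_\be$ are equal to the same set $\{M \in {\cal M} \mid M : \langle env_\o^M \v U\rangle\}$, and transitivity of equality finishes the argument.

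There is essentially no obstacle here: the corollary is an immediate consequence of the fact that the set-theoretic description of $[U]_r$ in Theorem~\ref{comple4}.\ref{comple4one} is $r$-independent. The only thing worth being careful about is to make sure one really is allowed to instantiate Theorem~\ref{comple4}.\ref{comple4one} at $r = f$ as well as $r = \beta$; this is legitimate because the $\beta$-interpretation ${\mathbb I}$ used in the completeness proof is also an $f$-interpretation (Lemma~\ref{comp}.\ref{compone}), so nothing about the argument privileges one saturation notion over the other. I would therefore keep the proof to a single line or two, citing Theorem~\ref{comple4}.\ref{comple4one} as the sole ingredient.
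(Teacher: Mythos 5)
Your proposal is correct and is exactly the paper's own argument: the corollary follows by applying Theorem~\ref{comple4}.\ref{comple4one} for both $r=f$ and $r=\beta$ and observing that the resulting description $\{M \in {\cal M} \ / \ M : \< env_\o^M \v U\>\}$ does not depend on $r$. Nothing further is needed.
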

\begin{proof}
By theorem~\ref{comple4}.\ref{comple4one}, 
$[U]_f = [U]_\be = \{M \in {\cal M}$ / $M : \< env_\o^M \v  U\> \}$.
\end{proof}
Hence, we write $[U]$ instead of either $[U]_f$ or $[U]_\be$.
\begin{remark}
\label{remarkme}
The reader may ask here why we introduced the two notions of saturation if the meaning of a type does not depend on whether this meaning was made using 
$\be$-interpretations or $f$-interpretations.  The answer to this question is that up to here, we could equally use $\be$-interpretations or $f$-interpretations.
However, to establish further results related to the meaning of types, especially for those types whose meaning consists of terms that reduce to closed terms, then we need  $\be$-saturation.
For this reason, in the rest of paper, we only
consider $\be$-saturation.
\end{remark}
Let us now 
reflect further on the meaning of types as given in definition~\ref{meaningopen}.  The next lemma gives three examples.
\begin{lemma}\label{example}
Let $a \in {\cal A}$, $U = \o \f (a \f a)$, $V = a \f (\o \f a)$
and \\$W = (\o \f a) \f a$. We have:
\begin{enumerate}
\item $[U] = \{ M \in {\cal M} / M \rb^* \l x. \l y. y\}$. Note that  $\l x. \l y. y : \<() \v U\>$.
\item $[V] = \{ M \in {\cal M} / M \rb^* \l x. \l y. x\}$. Note that $\l x. \l y. x : \<() \v V\>$.
\item $[W] = \{ M \in {\cal M} / M \rb^* \l x. x P$ where $P \in
{\cal M} \}$. \\ Note that $\l x. x P: \<env_\o^{\l x. x P} \v W\>$.
\end{enumerate}
\end{lemma}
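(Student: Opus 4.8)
The plan is to compute each meaning set $[U]$, $[V]$, $[W]$ by combining the characterisation from Theorem~\ref{comple4}.\ref{comple4one'} (namely $[X] = \{M \mid M \rb^* N \text{ and } N : \<env_\o^N \v X\>\}$) with the generation lemma~\ref{newgen} to pin down exactly which $\beta$-normal-ish terms can be typed by $X$ in the environment $env_\o^N$. For each of the three types I would argue in two directions: first exhibit a witnessing term and check it has the stated type (the ``Note that'' clauses already record these witnesses, so this direction is a short derivation-building exercise using $\f_i$, $\f'_i$, $\f_e$, $ax$ and $\o$); second, show that any $M$ in $[X]$ must $\beta$-reduce to the claimed shape. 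The second direction is where the work lies.

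For $[U]$ with $U = \o \f (a \f a)$: suppose $M \in [U]$, so $M \rb^* N$ with $N : \<env_\o^N \v \o \f (a \f a)\>$. Since $a \f a \in {\mathbb T}$ is not $\o$, lemma~\ref{goodeg} together with generation forces $N$ to be an abstraction $\l x. N'$ (a variable or application headed by a free variable cannot receive an arrow type in the relevant environment — one checks this against the typing rules), and by lemma~\ref{newgen}.(\ref{newgentwo} or~\ref{newgenthree}) $N' : \<\ldots \v a \f a\>$; iterating, $N' \rb$-reduces (or already is) of the form $\l y. N''$ with $N'' : \< \ldots, y : a \v a\>$ (using that $a \in {\mathbb T}$). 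Now $N''$ is typed with atomic type $a$; by lemma~\ref{newgen}.\ref{newgenone} and the shape of applications, $N''$ must be $\beta$-equal to the variable $y$ itself — it cannot be $x$ (which has type $\o$, and $\o \not\sqsubseteq a$ by lemma~\ref{omega}.\ref{omegathree}), and it cannot be a genuine application $y\,P_1\cdots P_k$ since $y : a$ with $a$ atomic admits no arrow type. Hence $N \rb^* \l x. \l y. y$, and by confluence (theorem~\ref{conf}.\ref{confthree}) $M \rb^* \l x. \l y. y$. The argument for $[V]$ with $V = a \f (\o \f a)$ is the mirror image: now the first bound variable gets type $a$ and the second gets $\o$, and the atomic conclusion must be the first variable $x$, giving $\l x. \l y. x$. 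For $[W]$ with $W = (\o \f a) \f a$: here $M \rb^* N = \l x. N'$ with $N' : \<\ldots, x : \o \f a \v a\>$; since the conclusion is atomic $a$ and the only variable carrying a type that can produce $a$ via $\f_e$ is $x : \o \f a$, generation forces $N' \rb$-reduces to $x\,P$ for some $P$ with $P : \<\ldots \v \o\>$ — and $P : \<env \v \o\>$ holds for \emph{any} $P$ with the right free variables by lemma~\ref{structyping}.\ref{structypingtwo}, so $P$ ranges over all of ${\cal M}$. Thus $[W] = \{M \mid M \rb^* \l x. x P, \ P \in {\cal M}\}$.

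The main obstacle is the ``only if'' direction's reliance on a clean \emph{normal-form analysis}: one needs the fact that a term typable (in the $env_\o$ style environment) by an atomic type $a$ must, up to $\beta$-conversion, be of the form $z\,P_1 \cdots P_k$ with $z$ a variable whose type in the environment is an arrow type ending in $a$ — and when that environment type is $\o$ or is itself $a$, this collapses to forcing $k=0$ and $z$ being the right variable. Establishing this rigorously means chaining lemma~\ref{newgen}.\ref{newgenone}, lemma~\ref{newgen}.\ref{newgenfour}, the shape lemmas~\ref{omega} and~\ref{goodeg}, and an induction on the structure of a (weak head) normal form, all the while invoking subject reduction/expansion (corollaries~\ref{finalbetaeta} and~\ref{finalbetaexp}) and confluence to move between $M$ and its normal form. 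None of the individual steps is deep, but assembling the case analysis carefully — in particular ruling out spurious applications and handling the $\o$-typed arguments via lemma~\ref{structyping}.\ref{structypingtwo} — is the part that requires care. I would present it as three parallel computations, factoring out the common ``atomic conclusion $\Rightarrow$ head variable'' sublemma so it is proved once rather than three times.
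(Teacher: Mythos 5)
Your easy direction (exhibit the witness, type it, invoke theorem~\ref{comple4}.\ref{comple4one'}) matches the paper. The hard direction, however, has a genuine gap. From $M\in[U]$ you only get (via theorem~\ref{comple4}) that $M$, or some reduct $N$ of $M$, is typable with $U$ in an $env_\o$-style environment; your analysis then assumes that this typable term can be brought into (weak head) normal form so that the generation lemmas can be applied to a term that is literally a variable, an abstraction, or an application headed by a free variable. Nothing in lemma~\ref{newgen}, lemma~\ref{omega} or lemma~\ref{goodeg} gives you that: a typable term may be a redex (e.g.\ $(\l z.\l x.\l y.y)\Omega$ has type $\o\f(a\f a)$ by subject expansion), and the statement that every term typable with these particular types head-normalises is exactly the non-trivial content of the lemma --- it is a (head) normalisation theorem, which in this paper is obtained only semantically (the interpretations ${\cal I}_{\cal S}$ and ${\cal I}(a)={\cal N}$, which moreover appear \emph{after} this lemma, and which in any case do not cover $W$, since $W\notin{\mathbb U}^+$ and terms in $[W]$ need not be normalisable). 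Your proposed ``atomic conclusion $\Rightarrow$ head variable'' sublemma, proved ``by induction on the structure of a weak head normal form'', is circular unless you first establish that the term has such a form. A smaller but real problem: the paper has no generation lemma for general applications $M\,P$ (lemma~\ref{newgen}.\ref{newgenfour} only treats $M\,x$ with $x\notin FV(M)$), so even the step ``an application headed by a free $\o$-typed variable cannot receive an arrow type'' needs an auxiliary lemma you would have to prove.

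The paper's proof avoids all of this by staying semantic: for each of the three statements it picks a tailor-made $\be$-saturated interpretation (e.g.\ ${\cal I}(a)=\{M \;/\; M\rb^* y\}$ for $U$, ${\cal I}(a)=\{M \;/\; M\rb^* xP\}$ for $W$), uses $M\in[U]\subseteq{\cal I}(U)$ to conclude that $M$ applied to suitably chosen fresh variables reduces to a term of known head shape, and then recovers the shape of a reduct of $M$ itself from the purely syntactic theorem~\ref{conf}.\ref{conftwo'}. That route needs no normalisation theorem for typed terms and no application-generation lemma; membership in the chosen interpretation directly supplies the reduction $Mxy\rb^* y$ (resp.\ $Mxy\rb^* x$, $Mx\rb^* xP$). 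If you want to salvage your typing-theoretic route you must either import a head-normalisation argument for terms typable with these types (itself a reducibility-style proof) or switch to the paper's strategy of instantiating the realisability semantics at a custom saturated set.
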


\begin{proof}
\begin{enumerate}
\item It is easy to show that $\l x. \l y. y : \<() \v U\>$.  
Note that $env^{\l x. \l y. y}_\o  = ()$.  Hence, $\{ M \in {\cal M} / M \rb^* \l x. \l y. y\} = 
\{ M \in {\cal M} / M \rb^* \l x. \l y. y \mbox{ and } \l x. \l y. y : \< env^{\l x. \l y. y}_\o \v U\>\} \subseteq [U]$ by theorem~\ref{comple4}.\ref{comple4one'}.

%Let ${\cal I}$ be a $\be$-interpretation. By theorem~\ref{adeq}, $\l x. \l y. y \in {\cal I}(U)$.  By lemma~\ref{combinedlem}.\ref{interpret}, ${\cal I}(U)$ is $\be$-saturated.  Hence, 
%$\{ M \in {\cal M} / M \rb^* \l x. \l y. y\}\subseteq {\cal I}(U)$.  Thus, 
%$\{ M \in {\cal M} / M \rb^* \l x. \l y. y\} \subseteq [U]$.\\
Conversely, let $M \in [U]$ and $y \not \in FV(M)$. Take the $\be$-interpretation ${\cal I}$ such that ${\cal I}(a) = {\cal X} = \{M \in
{\cal M} / M \rb^* y\}$.  Since  $M \in [U]$ then $M \in {\cal I}(U) =  {\cal M} \fx ({\cal I}(a)  \fx {\cal I}(a)) =
{\cal M} \fx ({\cal X} \fx {\cal X})$.
Let $x \neq y$ such that $x \not \in
FV(M)$. Since $x \in {\cal M}$ and $y \in {\cal X}$, then $M 
xy \in {\cal X}$, $M x y \rb^* y$ and by 
theorem~\ref{conf}.\ref{conftwo'}, $M \rb^* \l x. \l y. y$.

\item  It is easy to show that  $\l x. \l y. x : \<() \v V\>$.
Let ${\cal I}$ be a $\be$-interpretation. By theorem~\ref{adeq}, $\l
x. \l y. x \in {\cal I}(V)$.  By
lemma~\ref{combinedlem}.\ref{interpret}, ${\cal I}(V)$ is
$\be$-saturated.  Hence, $\{ M \in {\cal M} / M \rb^* \l x. \l
y. x\}\subseteq {\cal I}(V)$.  Thus, $\{ M \in {\cal M} / M \rb^* \l
x. \l y. x\} \subseteq [V]$.\\ Conversely, let $M \in [V]$ and $x \not
\in FV(M)$.  Take the $\be$-interpretation ${\cal I}$ such that ${\cal
I}(a) = {\cal X} = \{M \in {\cal M} / M \rb^* x\}$.  Since $M \in [V]$
then $M \in {\cal I}(V) = {\cal I}(a) \fx ({\cal M} \fx {\cal I}(a)) =
{\cal X} \fx ({\cal M} \fx {\cal X})$.  Let $y \neq x$ such that $y
\not \in FV(M)$. We have $x \in {\cal X}$ and $y \in {\cal M}$, then
$M xy \in {\cal X}$ and $M x y \rb^* x$. Thus, by
theorem~\ref{conf}.\ref{conftwo'}, $M \rb^* \l x. \l y. x$.
\item Let $P \in {\cal M}$.  Using lemma~\ref{structyping}.\ref{structypingtwo}, we can show that  
$\l x. x P: \<env_\o^{\l x. x P} \v W\>$ (irrespectively of whether $x \in FV(P)$ or not).
% Let $FV(\l x. x P) = dom(env_\o^{\l x. x P})$\\$ = \{x_1, x_2, \dots x_n\}$ and let ${\cal I}$ be a $\be$-interpretation. $\forall 1 \leq i \leq n$, $x_i\in {\cal I}(\o) = {\cal M}$. Hence by soundness theorem~\ref{adeq}, $(\l x. x P)[(x_i := x_i)_1^n] = \l x. x P \in {\cal I}(W)$. Since by
%lemma~\ref{combinedlem}.\ref{interpret}, ${\cal I}(W)$ is $\be$-saturated, then 
%$\{ M \in {\cal M} / M \rb^* \l x. x P$ where  $P \in {\cal M}\}
%\subseteq {\cal I}(W)$.  Thus, $\{ M \in {\cal M} / M \rb^* \l x. x P$ where  $P \in {\cal M}\}
%\subseteq [W]$.\\
 Now, $\{ M \in {\cal M} / M \rb^* \l x. xP\} = 
\{ M \in {\cal M} / M \rb^* \l x. xP \mbox{ and } \l x. xP : \< env^{\l x. xP}_\o \v W\>\} \subseteq [W]$ by theorem~\ref{comple4}.\ref{comple4one'}.

Conversely, let $M \in [W]$ and $x \not \in
FV(M)$. Take the $\be$-interpretation ${\cal I}$ such that ${\cal I}(a) = {\cal X} = \{M \in {\cal M} / M \rb^* x P$ where $P \in {\cal M}
\}$.  Then $M \in {\cal I}(W) = ({\cal M} \fx {\cal X}) \fx {\cal X}$.
Since $x \in {\cal M} \fx {\cal X}$, then $M \; x \in
{\cal X}$ and $M \; x \rb^* x P$ where $P \in {\cal M}$. Thus, by
theorem~\ref{conf}.\ref{conftwo'}, $M \rb^* \l x. x Q$ where $Q
\in {\cal M}$.
\end{enumerate}
\end{proof}

The meanings of the types $U$ and $V$ (of 
lemma~\ref{example}) contain only terms which are reduced to
closed terms.  Due to the position of $\o$ in $W$, the meaning 
of $W$ does not solely contain terms which are reduced to
closed terms. 
In $U$ and $V$, $\o$ has a negative occurence, but in
$W$, $\o$ has a positive one. We will generalize this
result.

\begin{definition}
\begin{enumerate}
\item We define two subsets  ${\mathbb U}^+$ and ${\mathbb
U}^-$ of ${\mathbb U}$ as follows:
\begin{itemize}
\item $\forall~a \in {\cal A}$, $a \in {\mathbb U}^+$ and $a \in {\mathbb
U}^-$.
\item $\o \in {\mathbb U}^-$.
\item If $U \in {\mathbb U}^+$, then $U \sqcap V \in {\mathbb
U}^+$.
\item If $U,V \in {\mathbb U}^-$, then $U \sqcap V \in {\mathbb
U}^-$.
\item If $U  \in {\mathbb U}^-$ and $T \in {\mathbb
U}^+$, then $U \f T \in {\mathbb U}^+$.
\item If $U  \in {\mathbb U}^+$ and $T \in {\mathbb
U}^-$, then $U \f T \in {\mathbb U}^-$.
\end{itemize}
\item Let ${\cal S} \subseteq {\cal V}$ where  ${\cal S} \not = \emptyset$.
\begin{enumerate}
\item We say that a term $M$ is ${\cal S}$-almost closed if $M \rb^* N$
and $FV(N) \subseteq {\cal S}$. We denote ${\cal M}^{\cal S}$ the
set of ${\cal S}$-almost closed terms.
\item We define the function 
${\cal I}_{\cal S} : {\cal A} \mapsto {\cal P}({\cal M})$ by:
$\forall~a \in {\cal A}$, ${\cal I}_{\cal S}(a) = {\cal M}^{\cal
S}$.
\end{enumerate}
\end{enumerate}
\end{definition}
The next lemma shows that ${\cal I}_{\cal S}$ is a $\be$-interpretation and 
relates 
${\cal I}_{\cal S}(U)$ and ${\cal M}^{\cal S}$ according to whether $U \in{\mathbb U}^+$ or $U \in{\mathbb U}^-$.
\begin{lemma}\label{key}
Let ${\cal S} \subseteq {\cal V}$ where  ${\cal S} \not = \emptyset$.
\begin{enumerate}
\item\label{keyzero}
${\cal I}_{\cal S}$ is a $\be$-interpretation.  I.e., $\forall~a \in {\cal A}$, ${\cal I}_{\cal S}(a)$ is $\be$-saturated.\\
Hence, we extend  ${\cal I}_{\cal S}$ to ${\mathbb U}$ as in Definition~\ref{defint}.\ref{definttwo}.
\item\label{keyone} If $U \in {\mathbb U}^+$, then ${\cal I}_{\cal S}(U)
\subseteq {\cal M}^{\cal S}$.
\item\label{keytwo} If $U \in {\mathbb U}^-$, then  ${\cal M}^{\cal S} \subseteq 
{\cal I}_{\cal S}(U)$.
\end{enumerate}
\end{lemma}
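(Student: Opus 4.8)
The plan is to prove the three items in order, since item~\ref{keyone} and item~\ref{keytwo} will be shown by a simultaneous induction on the structure of $U$, and item~\ref{keyzero} is needed before we can even speak of ${\cal I}_{\cal S}(U)$ for complex $U$.

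First I would prove~\ref{keyzero}. By Definition~\ref{defint}.\ref{defintone} it suffices to check that ${\cal M}^{\cal S}$ is $\be$-saturated, i.e.\ that if $M \rb^* N$ and $N \in {\cal M}^{\cal S}$ then $M \in {\cal M}^{\cal S}$. This is immediate: $N \in {\cal M}^{\cal S}$ means $N \rb^* P$ with $FV(P) \subseteq {\cal S}$, so $M \rb^* N \rb^* P$ by transitivity, whence $M \in {\cal M}^{\cal S}$. Then ${\cal I}_{\cal S}$ extends to all of ${\mathbb U}$ as in Definition~\ref{defint}.\ref{definttwo}, and by lemma~\ref{combinedlem}.\ref{interpret} each ${\cal I}_{\cal S}(U)$ is $\be$-saturated (a fact I will reuse freely below).

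Next I would prove~\ref{keyone} and~\ref{keytwo} together by induction on $U$, following exactly the clauses of the definition of ${\mathbb U}^+$ and ${\mathbb U}^-$. For the base case $U = a \in {\cal A}$ (which lies in both ${\mathbb U}^+$ and ${\mathbb U}^-$) we have ${\cal I}_{\cal S}(a) = {\cal M}^{\cal S}$ by definition, so both inclusions are equalities. For $U = \o \in {\mathbb U}^-$: ${\cal I}_{\cal S}(\o) = {\cal M} \supseteq {\cal M}^{\cal S}$, done. For $U \sqcap V$ with $U \in {\mathbb U}^+$ (so $U \sqcap V \in {\mathbb U}^+$): ${\cal I}_{\cal S}(U \sqcap V) = {\cal I}_{\cal S}(U) \cap {\cal I}_{\cal S}(V) \subseteq {\cal I}_{\cal S}(U) \subseteq {\cal M}^{\cal S}$ by IH on $U$. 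For $U \sqcap V$ with $U, V \in {\mathbb U}^-$: ${\cal M}^{\cal S} \subseteq {\cal I}_{\cal S}(U)$ and ${\cal M}^{\cal S} \subseteq {\cal I}_{\cal S}(V)$ by IH, so ${\cal M}^{\cal S} \subseteq {\cal I}_{\cal S}(U) \cap {\cal I}_{\cal S}(V) = {\cal I}_{\cal S}(U \sqcap V)$.

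The interesting cases are the two arrow clauses, and one of these is the main obstacle. Consider $U \f T \in {\mathbb U}^+$ with $U \in {\mathbb U}^-$, $T \in {\mathbb U}^+$; I must show ${\cal I}_{\cal S}(U) \fx {\cal I}_{\cal S}(T) \subseteq {\cal M}^{\cal S}$. Given $M$ in the left-hand set, pick a variable $x \in {\cal S}$ (possible since ${\cal S} \neq \emptyset$) with $x \notin FV(M)$; then $x \rb^* x$ and $FV(x) = \{x\} \subseteq {\cal S}$, so $x \in {\cal M}^{\cal S} \subseteq {\cal I}_{\cal S}(U)$ by IH (the negative side, applied to $U$). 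Hence $M x \in {\cal I}_{\cal S}(T) \subseteq {\cal M}^{\cal S}$ by IH (the positive side, applied to $T$), so $M x \rb^* N$ with $FV(N) \subseteq {\cal S}$. Now I want to conclude that $M$ itself is ${\cal S}$-almost closed. Here I expect to use theorem~\ref{conf}.\ref{conftwo''}: $M x$ is weakly $\beta$-normalising and $x \notin FV(M)$, hence $M$ is weakly $\beta$-normalising --- say $M \rb^* M'$ with $M'$ in $\beta$-normal form; then $M x \rb^* M' x \rb^* N'$ in normal form, and tracking free variables via theorem~\ref{conf}.\ref{confone} together with the fact that reducing $M' x$ can only either leave $M'$ a normal form (so $FV(M') \subseteq FV(M' x) = FV(N') \subseteq {\cal S}$... but we need $N$ not $N'$) --- the delicate point is that $N$ need not be normal, so I will instead argue directly: since $M$ is weakly normalising, $M \rb^* M'$ normal; then $M x \rb^* M' x$, and by confluence $M' x$ and $N$ have a common reduct, which (being a reduct of the normal-ish term) has free variables contained in $FV(M' x) \cap \ldots$; cleaner is to take $M x \rb^* M' x \rb^* P$ with $P$ normal, note $FV(P) \subseteq FV(M x) \subseteq FV(M) \cup \{x\}$, and use theorem~\ref{conf}.\ref{conftwo''}'s proof structure (case analysis on whether the outer redex of $M' x$ ever fires) to get $M \rb^* M''$ with $FV(M'') \subseteq {\cal S}$. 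The other arrow clause, $U \f T \in {\mathbb U}^-$ with $U \in {\mathbb U}^+$, $T \in {\mathbb U}^-$, asks for ${\cal M}^{\cal S} \subseteq {\cal I}_{\cal S}(U) \fx {\cal I}_{\cal S}(T)$: given $M \in {\cal M}^{\cal S}$ and $N \in {\cal I}_{\cal S}(U) \subseteq {\cal M}^{\cal S}$ (IH, positive side), I need $M N \in {\cal M}^{\cal S} \subseteq {\cal I}_{\cal S}(T)$ (IH, negative side) --- so it remains to see ${\cal M}^{\cal S}$ is closed under application of an ${\cal S}$-almost closed term to an ${\cal S}$-almost closed term, i.e.\ if $M \rb^* M_0$, $N \rb^* N_0$ with $FV(M_0), FV(N_0) \subseteq {\cal S}$ then $M N \rb^* M_0 N_0$ with $FV(M_0 N_0) \subseteq {\cal S}$, which is clear. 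The genuine difficulty, then, is localized entirely in the positive arrow case: extracting ${\cal S}$-almost-closedness of $M$ from that of $M x$, which I will handle by invoking the weak normalisation transfer of theorem~\ref{conf}.\ref{conftwo''} and then a free-variable bookkeeping argument on normal forms analogous to the proof of that theorem.
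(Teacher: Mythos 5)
Everything in your plan outside the positive arrow case is fine and essentially the paper's own argument: saturation of ${\cal M}^{\cal S}$ by transitivity of $\rb^*$, the base and intersection cases, the closure of ${\cal M}^{\cal S}$ under application for the negative arrow case. The gap is exactly where you located it, in showing ${\cal I}_{\cal S}(V \f T)\subseteq {\cal M}^{\cal S}$, and your route for it does not work. First, the choice of $x\in{\cal S}$ with $x\notin FV(M)$ is not "possible since ${\cal S}\neq\emptyset$": ${\cal S}$ is an arbitrary nonempty set of variables and may well be a singleton contained in $FV(M)$, so the freshness you need for theorem~\ref{conf}.\ref{conftwo''} is simply not available (the paper only takes an arbitrary $x\in{\cal S}$). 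Second, and more seriously, the premise of theorem~\ref{conf}.\ref{conftwo''} is unavailable: from $Mx\in{\cal I}_{\cal S}(T)\subseteq{\cal M}^{\cal S}$ you only know that $Mx$ $\beta$-reduces to \emph{some} term $N$ with $FV(N)\subseteq{\cal S}$; this says nothing about normalisation, and ${\cal M}^{\cal S}$ contains non-normalising terms (e.g.\ $x\,((\l w.ww)(\l w.ww))$ with $x\in{\cal S}$). So "$Mx$ is weakly $\beta$-normalising", and with it every subsequent step through a normal form $M'$ of $M$ and a normal reduct $P$ of $M'x$, is unjustified; normalisation is the tool of the later lemma~\ref{newkey} (where ${\cal I}(a)={\cal N}$), not of this one. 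Even granting it, your free-variable bookkeeping never ties the produced reduct of $M$ to ${\cal S}$: the bound $FV(P)\subseteq FV(M)\cup\{x\}$ points the wrong way, and the constraint $FV(\cdot)\subseteq{\cal S}$ lives on the given reduct $N$ of $Mx$, which you abandon.

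The repair is short and is what the paper does: analyse the reduction $Mx\rb^* N$ that you already have (no normal forms needed). Either the head application is never contracted, so $N=Px$ with $M\rb^* P$ and then $FV(P)\subseteq FV(N)\subseteq{\cal S}$; or at some point $M\rb^*\l y.Q$ and $Q[y:=x]\rb^* N$, and then $\l y.Q$ reduces to an abstraction whose free variables lie in ${\cal S}$ (in the paper: $\l y.Q=\l x.Q[y:=x]\rb^*\l x.N$, using that $x$ does not occur free in $Q$; if one worries about $x\in FV(Q)$, note that substituting the variable $x$ for $y$ creates no redexes, so the reduction of $Q[y:=x]$ lifts to $Q\rb^* Q_0$ with $FV(Q_0)\setminus\{y\}\subseteq FV(N)\subseteq{\cal S}$, and $M\rb^*\l y.Q_0$ suffices). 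In either case $M\in{\cal M}^{\cal S}$. You do gesture at "case analysis on whether the outer redex ever fires" at the very end, which is the right idea, but you apply it to a hypothetical normalising reduction of $M'x$ instead of to the reduction $Mx\rb^* N$ supplied by the induction hypothesis.
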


\begin{proof}
\ref{keyzero}.\ Easy since ${\cal I}_{\cal S}(a) = {\cal M}^{\cal
S}$ which is $\be$-saturated (use theorem~\ref{conf}.\ref{confone}).\\
We show \ref{keyone} and \ref{keytwo} by simultaneous induction on $U$.
\begin{itemize}
\item[\ref{keyone}.] Let $U \in {\mathbb U}^+$ and $M \in {\cal I}_{\cal S}(U)$.
\begin{itemize}
\item If $U = a$, the result comes by definition of $ {\cal I}_{\cal
S}$.
\item If $U = U_1 \sqcap U_2$ and $U_1 \in {\mathbb U}^+$,
then $M \in {\cal I}_{\cal S}(U_1)$ and, by IH, $M \in {\cal
M}^{\cal S}$.
\item If $U = V \f T$, $V \in {\mathbb U}^-$ and $T \in {\mathbb
U}^+$, then let $x \in {\cal S}$. We have $x \in {\cal M}^{\cal
S}$, then, by IH, $x \in {\cal I}_{\cal S}(V)$ and $M x \in {\cal
I}_{\cal S}(T)$. By IH, $M x \in {\cal M}^{\cal S}$, then $M x
\rb^* N$ and $FV(N) \subseteq {\cal S}$. We examine the reduction
$M x \rb^* N$.
\begin{itemize}
\item If $M \rb^* P$ and $N = P x$, then $FV(P) \subseteq FV(N)
 \subseteq {\cal S}$.
 \item If $M \rb^* \l y. Q$ and $Q[y := x] \rb^* N$, then \\$M \rb^* \l y.
 Q = \l x. Q[y := x] \rb^* \l x. N$ and $FV(\l x. N) \subseteq FV(N)
 \subseteq {\cal S}$.
\end{itemize}
Then $M \rb^* M'$ and $FV(M') \subseteq {\cal S}$. Thus $M \in
{\cal M}^{\cal S}$.
\end{itemize}
\item[ \ref{keytwo}.] Let $U \in {\mathbb U}^-$ and $M \in {\cal M}^{\cal S}$.
\begin{itemize}
\item If $U = a$, the result comes by definition of $ {\cal I}_{\cal
S}$.
\item If $U = \o$, then $M \in {\cal I}_{\cal S}(U) = {\cal M}$.
\item If $U = U_1 \sqcap U_2$ and $U_1,U_2 \in {\mathbb U}^-$,
then, by IH, $M \in {\cal I}_{\cal S}(U_1)$ and $M \in {\cal
I}_{\cal S}(U_2)$, then $M \in {\cal I}_{\cal S}(U_1 \sqcap U_2)$.
\item If $U = V \f T$, $V \in {\mathbb U}^+$ and $T \in {\mathbb
U}^-$, then let $P \in {\cal I}_{\cal S}(V)$. We have $M \rb^* N$
and $FV(N) \subseteq {\cal S}$. By IH, $P \in {\cal M}^{\cal S}$,
then $P \rb^* Q$ and $FV(Q) \subseteq {\cal S}$. We have $M P
\rb^* N Q$ and $FV(N Q) = FV(N) \cup FV(Q) \subseteq {\cal S}$,
then $M P \in {\cal M}^{\cal S}$, and, by IH, $M P \in {\cal
I}_{\cal S}(T)$. Thus $M \in {\cal I}_{\cal S}(V \f T)$.
\end{itemize}
\end{itemize}
\end{proof}
The next corollary shows that if $U\in{\mathbb U}^+$ then $[U]$
contains only elements which $\beta$-reduce to closed terms and $[U]$
is the set of all terms that $\be$-reduce to closed terms typable by
$U$.  Note that in the proof of~\ref{comple2one} below, we need
$\beta$-saturation and that this is the reason why we adopted
exclusively $\be$-saturation since remark~\ref{remarkme}.

\begin{corollary}\label{key2}
Let $U \in {\mathbb U}^+$.
\begin{enumerate}
\item\label{key2one}
If $M \in [U]$, then $M \rb^* N$ and $N$
is closed.
\item
\label{comple2one} $[U] = \{M \in {\cal M}$ / $M \rb^* N$ and $N : \< () \v  U\> \}$.
\end{enumerate}
\end{corollary}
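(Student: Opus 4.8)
The plan is to derive both parts from Lemma~\ref{key}, together with the characterisation of $[U]$ given in Theorem~\ref{comple4}. For part~\ref{key2one}, I would argue as follows. Let $M \in [U]$ with $U \in {\mathbb U}^+$. Choose a set ${\cal S} \subseteq {\cal V}$ with ${\cal S} \neq \emptyset$ and ${\cal S} \cap FV(M) = \emptyset$ (this is possible since ${\cal V}$ is infinite and $FV(M)$ is finite). By Lemma~\ref{key}.\ref{keyzero}, ${\cal I}_{\cal S}$ is a $\be$-interpretation, so $M \in [U] \subseteq {\cal I}_{\cal S}(U)$. By Lemma~\ref{key}.\ref{keyone}, since $U \in {\mathbb U}^+$, we get $M \in {\cal M}^{\cal S}$, i.e. $M \rb^* N$ for some $N$ with $FV(N) \subseteq {\cal S}$. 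But by Theorem~\ref{conf}.\ref{confone}, $FV(N) \subseteq FV(M)$, and $FV(M) \cap {\cal S} = \emptyset$, so $FV(N) = \emptyset$; that is, $N$ is closed.

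For part~\ref{comple2one}, the inclusion $[U] \subseteq \{M \mid M \rb^* N \text{ and } N : \<()\v U\>\}$ goes like this: take $M \in [U]$. By part~\ref{key2one}, $M \rb^* N$ with $N$ closed. By Theorem~\ref{comple4}.\ref{comple4two}, $[U]$ is stable by $\beta$-reduction, so $N \in [U]$, and then by Theorem~\ref{comple4}.\ref{comple4one}, $N : \<env_\o^N \v U\>$; since $N$ is closed, $env_\o^N = ()$, so $N : \<() \v U\>$. Conversely, if $M \rb^* N$ and $N : \<() \v U\>$, then again $()= env_\o^N$ (as $N$ is closed), so by Theorem~\ref{comple4}.\ref{comple4one}, $N \in [U]$, and by Theorem~\ref{comple4}.\ref{comple4three} ($[U]$ is stable by $\beta$-expansion), $M \in [U]$. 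This gives the reverse inclusion and hence equality.

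The only delicate point — and the reason the remark preceding the corollary emphasises $\be$-saturation — is the appeal to Lemma~\ref{key}, whose proof of part~\ref{keyone} in the arrow case needs the closure of ${\cal M}^{\cal S}$ under $\beta$-expansion of the head; with merely $f$-saturated sets this would not go through, since reducing $Mx$ when $M \rb^* \lambda y.Q$ requires a genuine $\beta$-step inside an abstraction to re-absorb $x$. Since that lemma is already available, the corollary itself is essentially bookkeeping: the main thing to be careful about is the choice of ${\cal S}$ disjoint from $FV(M)$, which is exactly what upgrades "${\cal S}$-almost closed" to "reduces to a closed term." Everything else is a direct combination of Lemma~\ref{key} with the stability and typability statements of Theorem~\ref{comple4}.
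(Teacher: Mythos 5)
Your proof is correct, and part~\ref{key2one} is exactly the paper's argument: pick ${\cal S}\neq\emptyset$ with ${\cal S}\cap FV(M)=\emptyset$, use lemma~\ref{key} to get $M\rb^* N$ with $FV(N)\subseteq{\cal S}$, and conclude $FV(N)=\emptyset$ by theorem~\ref{conf}.\ref{confone}. For part~\ref{comple2one} you take a slightly different (but equally valid) route: the paper obtains the forward inclusion from lemma~\ref{comp}.\ref{comptwo} followed by subject reduction (corollary~\ref{finalbetaeta}), and the converse directly from soundness (theorem~\ref{adeq}) together with $\be$-saturation of ${\cal I}(U)$ for every $\be$-interpretation ${\cal I}$; you instead route both inclusions through theorem~\ref{comple4} (the characterisation $[U]=\{M \; / \; M:\<env_\o^M\v U\>\}$ and its stability under $\beta$-reduction and $\beta$-expansion). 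Since theorem~\ref{comple4} is itself proved from precisely those ingredients, the two arguments coincide in substance; yours is a reasonable piece of bookkeeping that avoids re-invoking the lower-level lemmas, at the cost of one small step you leave implicit in the converse, namely that $N:\<()\v U\>$ forces $FV(N)=dom(())=\emptyset$ (lemma~\ref{structyping}.\ref{structypingone}), which is what justifies $env_\o^N=()$ before applying theorem~\ref{comple4}.\ref{comple4one} and \ref{comple4three}. Your closing observation about why $\be$-saturation (rather than $f$-saturation) is needed in lemma~\ref{key} matches the paper's remark preceding the corollary.
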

\begin{proof}
\begin{itemize}
\item[\ref{key2one}.]
Let ${\cal S}\subseteq {\cal V}$ 
such that ${\cal S} \not = \emptyset$ and ${\cal
S} \cap FV(M) = \emptyset$. Since $M \in [U]$, then $M \in {\cal
I}_{\cal S}(U)$, and, by lemma~\ref{key}, $M \rb^* N$ and $FV(N)
\subseteq {\cal S}$. But, by theorem~\ref{conf}.\ref{confone},
$F(N) \subseteq FV(M)$, then $FV(N) = \emptyset$.
\item[\ref{comple2one}.]
Let $M \in [U]$. By lemma~\ref{comp}.\ref{comptwo}, $M : \<\G \v U\>$.
By~\ref{key2one}, $M \rb^* N$ and $N$ is
closed.  Hence by subject reduction for $\beta$
corollary~\ref{finalbetaeta}, $N : \<\G \r_N \v U\>$. Since $N$ is
closed $N : \<() \v U\>$.

Conversely, let $M$ such that $M \rb^* N$ and $N : \< () \v U\>$, and
take a $\be$-interpretation ${\cal I}$. By theorem \ref{adeq}, $N \in
{\cal I}(U)$ and, since ${\cal I}(U)$ is $\be$-saturated, $M \in {\cal
I}(U)$. Then $M \in \bigcap_{{\cal I} \; \be-interpretation} {\cal
I}(U)$ and so, $M \in [U]$.  \end{itemize} \end{proof}

\begin{remark} Note that neither strong nor weak normalisation holds
in general for typable terms.  For example, $(\lambda x.xx)(\lambda
x.xx) : \<() \v \omega \>$.  As another example, take $\lambda
y.y((\lambda x.xx)(\lambda x.xx)) : \<() \v (\omega \rightarrow a)
\rightarrow a\>$ by lemma~\ref{example}.

We
cannot even establish a strong normalisation result for positive
types.  For example, $(\lambda y.\lambda x.x)((\lambda x.xx)(\lambda
x.xx)) : \<() \v a \rightarrow a\>$. In what follows however, we will
establish a weak normalisation result for positive types.
\end{remark} \begin{definition} We define the function ${\cal I} :
{\cal A} \mapsto {\cal P}({\cal M})$ by: $\forall~a \in {\cal A}$,
${\cal I}(a) = {\cal N}$ where ${\cal N}$ is the set of
$\beta$-normalising terms.  \end{definition}
\begin{lemma}\label{newkey} \begin{enumerate} \item\label{newkeyzero}
${\cal I}$ is a $\be$-interpretation.  I.e., $\forall~a \in {\cal A}$,
${\cal I}(a)$ is $\be$-saturated.\\ Hence, we extend ${\cal I}$ to
${\mathbb U}$ as in Definition~\ref{defint}.\ref{definttwo}.
\item\label{newkeyone} If $U \in {\mathbb U}^+$, then ${\cal I}(U)
\subseteq {\cal N}$.  \item\label{newkeytwo} Let ${\cal
N}'=\{xM_1\dots M_n \in {\cal M} / x\in {\cal V} \mbox{ and } M_1\dots
M_n \in {\cal N} \}$. Note, ${\cal N}' \subseteq {\cal N}$.\\ If $U
\in {\mathbb U}^-$, then ${\cal N}' \subseteq {\cal I}(U)$.
\end{enumerate} \end{lemma}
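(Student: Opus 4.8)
The plan is to prove the three parts of the lemma together, mirroring the structure of the proof of lemma~\ref{key}, since ${\cal N}$ and ${\cal N}'$ play here the roles that ${\cal M}^{\cal S}$ played there. First, part~\ref{newkeyzero} is immediate: ${\cal I}(a)={\cal N}$ and ${\cal N}$ is $\be$-saturated because weak head reduction cannot create a normalising term out of a non-normalising one --- more precisely, if $M\rb^* N$ and $N\in{\cal N}$, then $N\rb^* N'$ with $N'$ in $\be$-normal form, so $M\rb^* N'$ and $M\in{\cal N}$. (One could also invoke theorem~\ref{conf}.\ref{confthree} here, as in the analogous step of lemma~\ref{key}.) Once ${\cal I}$ is known to be a $\be$-interpretation it is extended to ${\mathbb U}$ as in Definition~\ref{defint}.\ref{definttwo}, and then theorem~\ref{adeq} and lemma~\ref{combinedlem}.\ref{interpret} are available.

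For parts~\ref{newkeyone} and~\ref{newkeytwo}, I would do a simultaneous induction on the structure of $U$, exactly following the case split in the inductive definition of ${\mathbb U}^+$ and ${\mathbb U}^-$. For~\ref{newkeyone}: if $U=a$, then ${\cal I}(a)={\cal N}$ by definition. If $U=U_1\sqcap U_2$ with $U_1\in{\mathbb U}^+$, and $M\in{\cal I}(U)\subseteq{\cal I}(U_1)$, then $M\in{\cal N}$ by IH. If $U=V\f T$ with $V\in{\mathbb U}^-$, $T\in{\mathbb U}^+$: pick a variable $x$; since $x\in{\cal N}'$ and $V\in{\mathbb U}^-$, the IH for~\ref{newkeytwo} gives $x\in{\cal I}(V)$, hence $Mx\in{\cal I}(T)$, and the IH for~\ref{newkeyone} gives $Mx\in{\cal N}$; then theorem~\ref{conf}.\ref{conftwo''} (choosing $x\notin FV(M)$) yields $M\in{\cal N}$. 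For~\ref{newkeytwo}: if $U=a$, then ${\cal N}'\subseteq{\cal N}={\cal I}(a)$. If $U=\o$, trivially ${\cal N}'\subseteq{\cal M}={\cal I}(\o)$. If $U=U_1\sqcap U_2$ with $U_1,U_2\in{\mathbb U}^-$, the IH gives ${\cal N}'\subseteq{\cal I}(U_1)\cap{\cal I}(U_2)={\cal I}(U)$. If $U=V\f T$ with $V\in{\mathbb U}^+$, $T\in{\mathbb U}^-$: take $M=xM_1\dots M_n\in{\cal N}'$ and any $P\in{\cal I}(V)$; by IH for~\ref{newkeyone}, $P\in{\cal N}$, so $MP=xM_1\dots M_nP$ with all of $M_1,\dots,M_n,P$ normalising, i.e.\ $MP\in{\cal N}'$; by IH for~\ref{newkeytwo}, $MP\in{\cal I}(T)$, hence $M\in{\cal I}(V\f T)$.

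The step I expect to be the main obstacle is the arrow case of~\ref{newkeyone}, and specifically the inference from ``$Mx\in{\cal N}$ and $x\notin FV(M)$'' to ``$M\in{\cal N}$''. In lemma~\ref{key} the analogous passage was handled by an explicit analysis of the reduction $Mx\rb^* N$ with $FV(N)\subseteq{\cal S}$; here the clean tool is theorem~\ref{conf}.\ref{conftwo''}, which states exactly that $M\,x$ weakly $\beta$-normalising and $x\notin FV(M)$ implies $M$ weakly $\beta$-normalising, so I would simply cite it rather than redo the case analysis. The only subtlety worth a word of care is that ${\cal N}'$ is genuinely needed (rather than just ${\cal N}$) in the arrow cases: in~\ref{newkeyone} we need the test argument $x$ to lie in ${\cal I}(V)$ for $V\in{\mathbb U}^-$, which by~\ref{newkeytwo} requires $x\in{\cal N}'$, and a bare variable is indeed in ${\cal N}'$; and in~\ref{newkeytwo} the head-variable form of elements of ${\cal N}'$ is exactly what guarantees that applying such a term to a normalising argument stays in ${\cal N}'$, which a plain ``normalising'' hypothesis would not give.
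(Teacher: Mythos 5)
Your proposal is correct and follows essentially the same route as the paper: part~\ref{newkeyzero} by direct $\be$-saturation of ${\cal N}$, then a simultaneous induction on $U$ for parts~\ref{newkeyone} and~\ref{newkeytwo}, with the arrow case of~\ref{newkeyone} discharged by applying $M$ to a fresh variable $x\in{\cal N}'$ and citing theorem~\ref{conf}.\ref{conftwo''}, and the arrow case of~\ref{newkeytwo} using the head-variable shape of ${\cal N}'$ exactly as the paper does. The only quibble is the aside suggesting theorem~\ref{conf}.\ref{confthree} for saturation (the analogous step in lemma~\ref{key} uses theorem~\ref{conf}.\ref{confone}, and your own direct argument already suffices), which does not affect correctness.
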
 \begin{proof} \ref{newkeyzero} is obvious.
We show \ref{newkeyone} and \ref{newkeytwo} by simultaneous induction
on $U$.  \begin{itemize} \item[\ref{newkeyone}.] Let $U \in {\mathbb
U}^+$ and $M \in {\cal I}(U)$.  \begin{itemize} \item If $U = a$, the
result comes by definition of $ {\cal I}$.  
\item If $U = U_1 \sqcap
U_2$ and $U_1 \in {\mathbb U}^+$, then $M \in {\cal I}(U_1)$ and, by
IH, $M \in {\cal N}$.  
\item If $U = V \f T$, $V \in {\mathbb U}^-$ and $T \in {\mathbb
U}^+$, then let $x \in {\cal V}\subseteq {\cal N}'$ such that $x \not
\in FV(M)$. By IH, $x \in {\cal I}(V)$ and $M x \in {\cal I}(T)$.  By
IH, $M x \in {\cal N}$. Hence, by theorem~\ref{conf}.\ref{conftwo''},
$M \in {\cal N}$.
\end{itemize}
\item[ \ref{newkeytwo}.] Let $U \in {\mathbb U}^-$ and $M \in {\cal
N}'$.  \begin{itemize} \item If $U = a$, the result comes by
definition of $ {\cal I}$.  \item If $U = \o$, then $M \in {\cal I}(U)
= {\cal M}$.  \item If $U = U_1 \sqcap U_2$ and $U_1,U_2 \in {\mathbb
U}^-$, then, by IH, $M \in {\cal I}(U_1)$ and $M \in {\cal I}(U_2)$,
then $M \in {\cal I}(U_1 \sqcap U_2)$.  \item If $U = V \f T$, $V \in
{\mathbb U}^+$ and $T \in {\mathbb U}^-$, then let $P \in {\cal
I}(V)$.  We have $M = xM_1 \dots M_n$ where $M_i \in {\cal N}$ for $1
\leq i \leq n$.  By IH, $P \in {\cal N}$.  Hence, $MP \in {\cal N}'$
and by IH, $M P \in {\cal I}(T)$.  Thus $M \in {\cal I}(V \f T)$.
\end{itemize} \end{itemize} \end{proof}

The next corollary shows that if $U\in{\mathbb U}^+$ then $[U]$
contains only elements which are normalisable.

\begin{corollary}\label{newkey2}
Let $U \in {\mathbb U}^+$.
\begin{enumerate}
\item\label{newkey2one}
If $M \in [U]$, then $M$ is normalisable.
\item\label{newnew}
If $M : \< () \v  U\>$ then $M$ is normalisable.
\item
\label{newcomple2one} $[U] = \{M \in {\cal M}$ / $M \rb^* N$, $N$ is in normal form and $N : \< () \v  U\> \}$.
\end{enumerate}
\end{corollary}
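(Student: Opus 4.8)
The plan is to reprise the proof of Corollary~\ref{key2}, this time with the $\be$-interpretation ${\cal I}$ introduced just before Lemma~\ref{newkey} (the one sending every atom to the set ${\cal N}$ of $\beta$-normalising terms) together with Lemma~\ref{newkey}.

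For~\ref{newkey2one}: if $M \in [U]$ then, by Definition~\ref{meaningopen}, $M \in {\cal I}(U)$ for this particular ${\cal I}$; since $U \in {\mathbb U}^+$, Lemma~\ref{newkey}.\ref{newkeyone} gives ${\cal I}(U) \subseteq {\cal N}$, so $M \in {\cal N}$ is normalisable. For~\ref{newnew}: if $M : \<() \v U\>$ then $M \rb^* M$, so $M \in [U]$ by Corollary~\ref{key2}.\ref{comple2one} (equivalently, $FV(M) = \emptyset$ by lemma~\ref{structyping}.\ref{structypingone}, and soundness, theorem~\ref{adeq}, applied with the empty substitution puts $M$ into every ${\cal I}(U)$); now~\ref{newkey2one} applies.

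For~\ref{newcomple2one}, the inclusion $\supseteq$ is immediate: any $M$ in the right-hand side satisfies $M \rb^* N$ with $N : \<() \v U\>$, hence $M \in [U]$ by Corollary~\ref{key2}.\ref{comple2one}. For $\subseteq$, take $M \in [U]$. By~\ref{newkey2one}, $M$ has a $\beta$-normal form $N$ (so $M \rb^* N$), and by Corollary~\ref{key2}.\ref{key2one} there is a closed term $P$ with $M \rb^* P$. Church--Rosser (theorem~\ref{conf}.\ref{confthree}) supplies a common reduct of $N$ and $P$; since $N$ is in normal form this common reduct is $N$ itself, whence $P \rb^* N$ and, by theorem~\ref{conf}.\ref{confone}, $FV(N) \subseteq FV(P) = \emptyset$, so $N$ is closed. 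Stability of $[U]$ under $\beta$-reduction (theorem~\ref{comple4}.\ref{comple4two}) gives $N \in [U]$, and then Corollary~\ref{key2}.\ref{comple2one} (or, since $N$ is closed, lemma~\ref{comp}.\ref{comptwo}) yields $N : \<() \v U\>$. Hence $M$ lies in the right-hand side.

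The only step requiring a little care is the $\subseteq$ direction of~\ref{newcomple2one}: normalisability from~\ref{newkey2one} does not by itself tell us that the chosen normal form $N$ of $M$ is closed or typable by $\<() \v U\>$, so one has to play Church--Rosser off against the closed reduct furnished by Corollary~\ref{key2}.\ref{key2one} and then re-run subject reduction through the characterisation of $[U]$ as the set of terms $\beta$-reducing to a term typable by $\<() \v U\>$. Everything else is a direct instance of the template already established for Corollary~\ref{key2}.
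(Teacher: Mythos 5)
Your proposal is correct and follows essentially the same route as the paper: part~\ref{newkey2one} via the interpretation sending every atom to ${\cal N}$ together with lemma~\ref{newkey}, part~\ref{newnew} reduced to part~\ref{newkey2one} (the paper applies theorem~\ref{adeq} and lemma~\ref{newkey} directly, which is exactly your parenthetical variant), and part~\ref{newcomple2one} by playing Church--Rosser off against the closed typable reduct supplied by corollary~\ref{key2}. The only cosmetic difference is in part~\ref{newcomple2one}: the paper normalises the typable reduct $P$ and applies subject reduction (corollary~\ref{finalbetaeta}) to its normal form, whereas you reduce $P$ onto the chosen normal form $N$ of $M$ and conclude via stability of $[U]$ under reduction plus the characterisation in corollary~\ref{key2}.\ref{comple2one} --- the same ingredients in a different order.
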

\begin{proof}
\begin{itemize}
\item[\ref{newkey2one}.]
By lemma~\ref{newkey},  $M \in [U] \subseteq {\cal I}(U) \subseteq {\cal N}$.
\item[\ref{newnew}]
By Theorem~\ref{adeq}, $M \in {\cal I}(U)$.  By lemma~\ref{newkey}, $M \in {\cal N}$.
\item[\ref{newcomple2one}.]
Let $M \in [U]$. By Corollary~\ref{key2}.\ref{comple2one},
$M \rb^* P$ and $P : \< () \v  U\>$.
Since by~\ref{newkey2one}, $M$ is normalisable then by Church-Rosser $P$ is normalising. Let $N$ be the normal form of $P$.  By Subject reduction corollary~\ref{finalbetaeta},
$N  : \< () \v  U\>$.\\
The inverse inclusion is obvious by corollary~\ref{key2}.\ref{comple2one}.
\end{itemize}
\end{proof}

\begin{remark}
It should be noted that positive types are not exlusively the types
which satisfy the properties proved about them (e.g.,
corollary~\ref{key2}). For example, let us take the non-positive type
$U' = (\omega \rightarrow b) \rightarrow (a \rightarrow a)$ where $a$
and $b$ are different.  We can show that $[U']$ only contains terms
which reduce to the closed term $\lambda x .\lambda y .y$ (and that
$\lambda x .\lambda y .y:\<()\v U'\>$). Hence, $U'$ is a type which is not positive, yet for which corollary~\ref{key2} holds.  Note that, 
since
$a$ and $b$ are different, then $(\omega \rightarrow b)$ cannot be
used in type derivations.
\end{remark}

\section{Conclusion} 
\label{concl}
In this article, we considered an elegant
intersection type system for which we established basic properties
which include the subject reduction and expansion  properties for $\beta$.
We gave this system a realisability semantics and
we showed its soundness and completeness using a method comparable to
(yet more detailed than) Hindley's completeness semantics for an
earlier intersection type system. The basic difference between both
proofs is that Hindley's notion of saturation is based on equivalence
classes whereas ours is based on a weaker requirement of weak head
normal forms. Hence, all of Hindley's saturated models are also saturated in
our framework yet on the other hand, there are saturated models based
on weak head normal form which cannot be models in Hindley's
framework.  This means that our method provides a larger set of
possible models and this leaves the choice open for better models or
counter-models for particular applications.  We have even proved that
for different notions of saturation (based on weak head reduction and
normal $\beta$-reduction) we obtain the same interpretation for types.
Another difference between our approach and that of Hindley is that
he constructs his models modulo the convertibility relation, whereas
we establish that the interpretation of types is stable by both
$\beta$-reduction and $\beta$-expansion.

Furthermore, we reflected on the meaning of types, especially on the
so-called abstract data types where typability and realisability
coincide.  The presence of $\omega$ in intersection type systems
prevents typability and realisability from coinciding as one sees for
example in $\lambda x.xP$ (where $P$ may contain free variable and may
not be normalisable) whose type is $(\omega \rightarrow a) \rightarrow
a$.  We found a set of types ${\mathbb U}^+$ for which we showed that
typability and realisability coincide.  We have also shown that this
set satisfies the weak normalisation property.

\section*{Acknowledgements} We are grateful for the comments received
from M.\ Dezani, J.R.\ Hindley, V.\  Rahli, J.B.\ Wells and the anonymous referee.


\begin{thebibliography}{99}

\bibitem{alessi}
F. Alessi, F. Barbanera and  M. Dezani-Ciancaglini, 
{\em Intersection types and lambda models}.
Theoretical Computer Science 355, pp. 108-126, 2006.

\bibitem{Bar2}
H.P. Barendregt, M. Coppo and M. Dezani-Ciancaglini, {\em A filter
lambba model and the completeness of type assignement}. Journal of
Symbolic Logic, vol 48, num 4, 1983.

\bibitem{Bar1}
H.P. Barendregt, {\em The Lambda Calculus, Its Syntax and
Semantics}. North-Holland, 1985.

\bibitem{carwel}
S. Carlier and J.B. Wells, {\em Expansion: The Crucial Mechanism for
Type Inference with Intersection Types: A Survey and Explanation}.  
Under consideration for publication in the Journal of Functional Programming.

\bibitem{coppo78}
M. Coppo and M. Dezani-Ciancaglini, 
{\em A new type-assignment for the $\lambda$ terms}.
Archiv f\"ur mathematische logik 19, pp. 139-156, 1978.

\bibitem{coppo80}
M. Coppo and M. Dezani-Ciancaglini, 
{\em An extension of the basic functionality for the $\lambda$-calculus}.
Notre Dame journal of formal logic 21(4), pp. 685-693. 1980.


\bibitem{Coq}
T. Coquand, {\em Completeness theorems and lambda-calculus}.
TLCA05, LNCS 3461, pp. 1-9, 2005.

\bibitem{Dav}
R. David, {\em Normalization without reducibility.}
Annals of Pure and Applied Logic 107, pp. 121-130, 2001.

\bibitem{FaNo1}
S. Farkh and K. Nour, {\em Un r\'esultat de compl\'etude pour les
types pour-tout-positifs du syst\`eme F}. Comptes rendus de
l'Acad\'emie des Sciences, Paris, 326, S'erie I, pp. 275-279,
1998.

\bibitem{FaNo2}
S. Farkh and K. Nour, {\em R\'esultats de compl\'etude pour des
classes de types du syst\`eme AF2}. Theoretical Informatics and
Applications, vol 31, num 6, pp. 513-537, 1998.

\bibitem{Hin1}
J.R. Hindley, {\em The simple semantics for Coppo-Dezani-Sall\'e
types}. In International symposium on programming (Turin, 1982).
Lecture Notes and Computer Science, vol 137, Springer, Berlin, pp.
212-226.

\bibitem{Hin2}
J.R. Hindley, {\em The completeness theorem for typing
$\l$-terms}. Theoretical computer science, vol 22, pp. 1-17, 1983.

\bibitem{Hin3}
J.R. Hindley, {\em Curry's types are complete with respect to
F-semantics too}. Theoretical computer science, vol 22, pp.
127-133, 1983.

\bibitem{Hin4}
J.R. Hindley, {\em Basic Simple Type Theory}.  Cambridge University Press.

\bibitem{Kri1}
J.L. Krivine, {\em Lambda Calcul : types et mod\`eles}, Masson,
Paris, 1990.

\bibitem{Lab1}
R. Labib-Sami, {\em Typer avec (ou sans) types auxili\`eres}.
Manuscript.

\bibitem{NoSa}
K. Nour and K. Saber, {\em A completeness result for the simply typed
$\lambda\mu$-calculus}.  Submitted.

\bibitem{salle} Patrick Sall\'e. {\em Une extension de la th\'eorie des
types en lambda-calcul}. In G. Ausiello and Corrado B\"ohm, editors,
Fifth International Conference on Automata, Languages and Programming,
volume 62 of Lecture Notes in Computer Science, pp. 398-410. Springer,
July 1978.

\bibitem{bakel}
S. van Bakel. {\em Principal type schemes for the strict type assignment system}. Journal of  Logic Computation 3(6), pp. 643-670, December 1993.

\bibitem{principal}
J. B. Wells. {\em The essence of principal typings}. In Proc. 29th
Int'l Coll. Automata, Languages, and Programming, volume 2380 of LNCS,
pp. 913-925. Springer-Verlag, 2002.

\end{thebibliography}
\end{document}